\pdfoutput=1
\documentclass[12pt]{amsart}
\usepackage[margin=1in]{geometry}

\usepackage[final]{microtype}
\usepackage[x11colors]{xcolor}
\usepackage{tikz}
\usetikzlibrary{shapes,decorations.pathreplacing,decorations.pathmorphing,arrows,positioning,matrix}
\tikzset{>=stealth}
\usepackage{amsmath,amssymb,amsthm,mathtools}
\usepackage[scr=boondox]{mathalfa}
\usepackage[colorlinks]{hyperref}
\usepackage{listings}


\numberwithin{equation}{section}

\newtheorem{thm}{Theorem}
\numberwithin{thm}{section}
\newtheorem{prob}[thm]{Problem}
\newtheorem{lemma}[thm]{Lemma}
\newtheorem{cor}[thm]{Corollary}

\newtheorem{defn}[thm]{Definition}
\theoremstyle{remark}
\newtheorem{rmk}[thm]{Remark}
\newtheorem{ex}[thm]{Example}

\newcommand{\defeq}{\ensuremath\coloneqq}
\DeclareMathOperator{\len}{len}

\DeclareMathOperator{\Val}{\mathbb{V}}
\DeclareMathOperator{\Msr}{\mathbf{M}}

\DeclareMathOperator{\red}{\downarrow}
\DeclareMathOperator{\tot}{\red_{[]}}
\DeclareMathOperator{\Merge}{Merge}
\DeclareMathOperator{\JOINS}{Joins}
\newcommand{\pair}[1]{\ensuremath\left\langle#1\right\rangle}

\newcommand{\sS}[1]{\ensuremath\mathcal{#1}}

\newcommand{\Zb}{\ensuremath\mathbb{Z}/2\mathbb{Z}}
\newcommand{\into}{\ensuremath\hookrightarrow}
\newcommand{\onto}{\ensuremath\twoheadrightarrow}
\newcommand{\ptimes}{\ensuremath\tilde{\times}}

\hyphenation{homeo-morphism co-dim-en-sion call-ed as-soc-iated degen-eracy diff-eren-tial char-ac-ter-is-tic poly-nom-ial hyper-bolic hom-ology homo-topy}

\lstset{basicstyle=\ttfamily\small,stringstyle=\ttfamily,keywordstyle=}
\begin{document}
\title[Obstructions to Data Merging]{Persistent Obstruction Theory for a Model Category of Measures with Applications to Data Merging}
\author{Abraham D. Smith, Paul Bendich, and John Harer}
\address{A. Smith\\
Mathematics, Statistics, and Computer Science\\
University of Wisconsin-Stout\\
Menomonie, WI, 54751, USA\\
Geometric Data Analytics, Inc. \\
Durham, NC, 27707, USA}
\email{smithabr@uwstout.edu}
\address{P. Bendich \\
Mathematics \\
Duke University \\
Durham, NC, 27708, USA \\
Geometric Data Analytics, Inc. \\
Durham, NC, 27707, USA \\
United States}
\email{bendich@math.duke.edu}
\address{J. Harer \\
Mathematics\\
Duke University\\
Durham, NC, 27707, USA \\
Geometric Data Analytics, Inc. \\
Durham, NC, 27707, USA}
\email{harer@math.duke.edu}
\date{\today}

\subjclass[2010]{Primary 55U10, Secondary 18G30, 55S35}

\begin{abstract}
Collections of measures on compact metric spaces form a model category (``data
complexes''), whose morphisms are marginalization integrals.  The fibrant
objects in this category represent collections of measures in which there is a
measure on a product space that marginalizes to any measures on pairs of its
factors.  The homotopy and homology for this category allow measurement of
obstructions to finding measures on larger and larger product spaces.
The obstruction theory is compatible with a fibrant filtration built from the
Wasserstein distance on measures.

Despite the abstract tools, this is motivated by a widespread problem in data science.
Data complexes provide a mathematical foundation for semi-automated data-alignment tools that 
are common in commercial database software.
Practically speaking, the theory shows that database JOIN
operations are subject to genuine topological obstructions.  Those obstructions
can be detected by an obstruction cocycle and can be resolved by moving through
a filtration.  Thus, any collection of databases has a persistence level, which measures
the difficulty of JOINing those databases.
Because of its general formulation, this persistent obstruction theory also
encompasses multi-modal data fusion problems, some forms of Bayesian inference,
and probability couplings.
\end{abstract}

\maketitle

\section{Introduction}
\label{sec:Intro}

We begin this paper with an abstraction of a problem familiar to any large enterprise.
Imagine that the branch offices within the enterprise have access to many data sources.  
The data sources exposed to each office are related and overlapping but non-identical.
Each office attempts to merge its own data sources into a cohesive
whole, and reports its findings to the head office.  This is done by humans, often aided by ad-hoc data-merging software solutions. Presumably, each office does a good job of merging the data
that they see.
Now, the head office receives these cohesive reports, and must combine them
into an overall understanding.

This paper provides a mathematical foundation combining methods from measure theory, simplicial homotopy, obstruction theory, and persistent cohomology (Section \ref{sec:IO} gives an overview) for semi-automated data-table-alignment tools (e.g, HumMer \cite{FelixNaumann}) that
are common in commercial database software. 
Data tables are abstracted as measures over value spaces.
The problem of merging tables, or indeed merging previously-merged tables, is recast as the search for a measure
that marginalizes correctly. 

For example, one data table might record the ages and heights and weights of patients in a hospital, abstracted as a formal sum of point-mass atoms.
Another data table might be an actuarial table giving ages and heights and weights for an entire population, abstracted as a smooth distribution where the heights and weights form 2-dimensional elliptical Gaussian distributions for each age and height, the means and singular values varying with age.
Both examples would be data tables on the same age-by-height-by-weight space.
A third data table might be a simulated probabilistic model of injury severity during vehicle collisions based on height and weight of the passenger.
This data table on height-by-weight-by-severity space may or may not merge with each of the previous examples over height-by-weight, within some error.
One can imagine many other data tables collected from myriad sources (motor-vehicle records, longitudinal studies, clinical trials) related to this example that may be of interest.

Our first fundamental result (Theorem \ref{thm:ft}) uses this measure-theoretic lens to draw a surprising correspondence between the process of JOIN in database engineering and the Kan extension property for simplicial sets.

This abstraction, and the model-theoretic tools that come with it, permits several advances over the current state of the art, which are collected in our second fundamental result (Theorem \ref{thm:tri}).
First, inconsistencies in table collections are automatically detected as \emph{obstructions} in the sense of Steenrod (i.e, a certain co-cycle is not zero).
Second, when inconsistencies are detected, the obstruction theoretic tools, combined with persistent cohomology, provide two potential remedies: a) if algebraically permitted (i.e, a certain co-class is zero), the user may retreat back one level of merging, repair, and proceed; b) else, the user may settle for a measure that only marginalizes approximately correctly, with the degree of possible correctness computed automatically by persistent obstruction theory.

More broadly, we are interested in the following three meta-problems:
\begin{prob}[Testing Problem]
Given several sources of partial information, how do we
test that a hypothetical explanation is reasonably consistent with that
partial information?
\label{GPtest}
\end{prob}

\begin{prob}[Merging Problem]
Given several sources of partial information, how do we merge that
partial information into a cohesive whole?
\label{GPmerge}
\end{prob}

\begin{prob}[Meta-Merging Problem]
Given many sources of partial information, and several partial attempts to merge
some combinations of them, is there a way to merge these partial merges into
a cohesive whole?
\label{GPmetamerge}
\end{prob}
By ``sources of partial information'' we mean, roughly, collected data (databases, spreadsheets, pictures), statistical models, established theories,
simulations, and general population trends.
In this article, we define a formal mathematical structure---a \emph{Data
Complex} (Section \ref{sec:db})---that can encapsulate a wide range of problems like \ref{GPtest},  \ref{GPmerge},
and \ref{GPmetamerge}.
A data complex is meant to encode each source of information as a
finite measure on an appropriate value space.  Whether these measures arise from collected data
as in Problem \ref{GPmerge}
or some model/theory/simulation/trend/merger derived from previous work as in
Problem \ref{GPmetamerge}, we call them \emph{data tables}.
By using measures, we combine Problems \ref{GPmerge} and \ref{GPmetamerge} into a
single problem.

\subsection{Overview of Technical Approach}
\label{sec:IO}

Often, formal mathematics looks very different than its intuitive purpose, so
we want to make sure the reader understands our intent, absent its formal
notation.

We want a mathematically rigorous way to encode and solve Problems~\ref{GPtest}--\ref{GPmetamerge}. 
When translated to the language of data complexes, a physically reasonable process for
``merge [data tables] into a cohesive whole'' can be expressed in terms of four
key mathematical ingredients: homological algebra for simplicial sets, simplicial homotopy
\cite{Kan1958, Quillen1967}, obstruction theory
\cite{Steenrod}, and persistent (co)homology across a filtration \cite{Edelsbrunner2010}.

The first ingredient (homological algebra) is used because 
data tables may overlap partially, meaning that we need a formal simplicial
complex to encode all possible intersections of all possible data tables.  
Moreover, simplicial sets allow data tables with repeated columns.
The
marginalization integral provides a face map and a boundary operator,
and an analogue of the diagonal measure within a product provides the degeneracy map.
The face and boundary operators tell us whether one ``narrow'' data table
reproduces the data of another ``wider'' data table via marginalization
integrals.  Thus, the question of whether several ``narrow'' data tables can be merged
into a few ``wider'' data table becomes the question of whether a $k$-chain is the
boundary of a $(k{+}1)$-chain.  That is, the ability to merge overlapping
partial information sources as in Problem \ref{GPmerge} is encoded as the homology of a data complex.

The second ingredient (simplicial homotopy)
arises because Problems \ref{GPtest}, \ref{GPmerge},  \ref{GPmetamerge} suggest that we want ``simple''
solutions.  Even when partial merging is possible in the sense of homology, it
may be that the result is too complicated to be merged further.  In the study
of geometric bundles, the fundamental group and higher homotopy groups of the
fiber play a key role, and we use simplicial homotopy in a similar way here. A
simple solution to Problem \ref{GPmerge}/\ref{GPmetamerge} or a simple hypothesis in Problem \ref{GPtest} corresponds to
a single data table (as opposed to a complicated chain of many data tables),
which is indicated by triviality in the simplicial homotopy group. 

An
important side effect of introducing simplicial homotopy (via model categories) is that we see that
the Kan extension condition means ``merging operations are possible.'' The
process we call \emph{merging} is similar to \emph{JOIN'ing} in database
software, to \emph{fusion} in multi-modal data analysis, and to \emph{coupling}
in probability theory.  This link reinforces the intuition that data complexes
are a good way to encode Problems \ref{GPtest}/\ref{GPmerge}/\ref{GPmetamerge} for modern data mining
when using spreadsheets, DataFrames, and SQL databases. 
Indeed, our first fundamental result (Theorem \ref{thm:ft}) explicitly formalizes this correspondence.

The reader may be wondering why we introduce something as abstract as
simplicial homotopy into something so concrete and common as data merging.
Consider the typical database operation 
\lstinputlisting[language=SQL,frame=single]{join.sql}
When issuing such a command, the administrator \emph{must} designate two tables
to be \texttt{JOIN}ed and choose specific columns from the two tables to be identified
via the \texttt{ON} clause.   The \texttt{SELECT * \dots;} command returns a table, whose columns must
appear in some order that is determined by the ordering of attributes in \texttt{table1} and
\texttt{table2}, by their placement in the command, and by the columns in the
\texttt{ON} clause.   Thus, in the language of
Section~\ref{sec:db}, the database software and the working administrator must 
agree on a total set of attributes, the attributes in each table, and an
ordered attribute inclusion to be used for the \texttt{ON} clause.

This command also indicates why we formalize ``data tables'' as measures over
products of attribute value spaces.  Replacing \texttt{SELECT *} with a
\texttt{SELECT columnList} corresponds to the ability to
re-order the attribute list and to marginalize the output to a sublist of attributes; hence, arbitrary finite products are possible.
The optional \texttt{WHERE condition} clause allows one to impose additional restrictions on the values to
be considered by imposing logical conditions on the entries, such as \texttt{WHERE (age > 18 AND height > 200)}.  These conditions allow one one to restrict the data table to any\footnote{Any measurable subset---in principle and given a sufficiently generous SQL implementation.} measurable subset of the value space. 
The entries of a \texttt{WHERE}-restricted data table constitute the mass of this measurable set, with respect to the data table.
(Finally, for those fluent in SQL subtleties, note that the ability to perform
\texttt{LEFT}, \texttt{RIGHT}, and \texttt{OUTER JOIN} 
instead of \texttt{INNER JOIN} will be encompassed by approximate join and face operations
in Section~\ref{sec:FO}.)

The third ingredient (Steenrod's obstruction theory as in \cite{Steenrod1943}) provides guidance
on how to combine homological algebra and homotopy theory to detect and
describe any obstructions to an iterative merging process.
In its original formulation, obstruction theory asks 
whether a section $\sigma$  of a fiber bundle $p$ defined
over a topological space $B$ can be extended to a section defined over a
larger topological space $A \supset B$? 
The most famous example is the smooth category,
where one computes characteristic cohomology classes to indicate whether sections of a bundle can be extended globally.
Steenrod studied this problem in the case of fibrations over general
topological spaces.  Typically, assuming one has some sort of CW structure on $A$, one tries to extend $\sigma$ first over the $0$-skeleton of $A$ and then the $1$-skeleton of $A$, and so forth. Assuming one has already extended $\sigma$ to a section over the $(n-1)$-skeleton of $A$, Steenrod's obstruction cocycle is an element $\xi_\sigma$ of $C^{n}(A,B;
\pi_{n-1, \sigma}(F_0))$, where $\pi_{n-1, \sigma}(F_0)$ is the homotopy group of the fiber $F_0$ of $p$, as twisted by $\sigma$; loosely, the co-chain $\xi_\sigma$ is defined on each $n$-cell $c$ of $A$ by restricting $\sigma$ to the boundary of $c$, but there is some nuance coming from the twisting needed to turn this into a homotopy class of the fiber.
If this cocyle $\xi_\sigma:C_n \to \pi_{n-1}$ is trivial in
homotopy, then the section $\sigma$ can be extended, and otherwise it cannot. However, if this cocycle is a coboundary,
$[\xi_\sigma]=0$, then there is another section $\tau$ , agreeing with $\sigma$ on the $(n-2)$-skeleton of $A$, with 
$\xi_{\tau}$ trivial in homotopy.  Hence, obstructions are
discovered dimension-by-dimension via homotopy-valued cohomology, and the obstruction computation can often permit the ``correction'' of initial extensions of the section to avoid higher-dimensional obstructions.

This concept of an obstruction cocycle was introduced by Steenrod in \cite{Steenrod1943} and
revisited many times, such as \cite{Olum1950} and \cite{Lundell1960}.
Its importance motivated early work in category theory.
The entire \emph{raison d'\^etre} of defining fibrant objects and model
categories in \cite{Kan1958} and \cite{Quillen1967} was to establish the most
general context in which these (co)homology and homotopy calculations remain
sensible for more general notions of ``weak equivalence.''
In particular, one does not require actual topological spaces to perform obstruction
theory, merely fibrant objects in a model category. 

Here, we establish homology and homotopy theory for data tables by
relying on these categorical foundations, giving us an obstruction theory directly
analogous to Steenrod's.
When sequential merging is impossible, the obstruction
cochain can compute specific data tables that obstruct the process.  That is,
obstruction theory determines when Problem \ref{GPmetamerge} is solvable locally but
not globally.  

The fourth ingredient, persistent (co)homology, provides a mathematically
robust way to measure how much the underlying original data tables would have
to be altered, in order to overcome an obstruction.  This is a key feature of
the theory, because from a practical perspective, multiple information sources
are \emph{never} perfectly consistent.  Typos and transpositions and omissions
and error bars always exist, and must be accounted for.  We use a filtration built
from the Wasserstein distance on measures to ensure that the desired simplicial homotopy
is possible throughout all levels of the filtration.
This allows for a well-defined notion of \emph{persistent obstruction theory.}
Our second fundamental result (Theorem \ref{thm:tri}) formalizes the idea that when inconsistencies are identified, one of two remedies may be available\footnote{In fact, the second is always available, but may be less desirable!}
\begin{itemize}
 \item the head office should retreat back one merging level, repair (with repair suggested by algebra), then again seek consensus
 \item the head office should settle for only approximate consensus, where the desired measure only approximately marginalizes correctly, with the degree of approximation computed via persistence.
\end{itemize}

The ultimate result of this article is a mathematically robust framework for
data merging that is reasonably applicable to real-world
data.  In this framework, Problems~\ref{GPtest} and
\ref{GPmerge}/\ref{GPmetamerge} become Problems~\ref{GPtest2} and \ref{GPmerge2}, which are answered by Theorem~\ref{thm:tri} and Definition~\ref{def:pers}.

\subsection{Related Work}
\label{sec:RW}

To the best of our knowledge, this is the first work to combine all of the tools above to build a robust obstruction theory
for databases. Other authors have used different aspects of these tools to address databases.
For example, recent work by Fong and Spivak uses database schemas and type/value
relationships as a motivational example to introduce functors and (co)limits
\cite[Chapter 3]{Fong2018}.  Specifically Example 3.99 and the chapter's final
remark are somewhat in the same spirit as the approach taken here.
Our category of data complexes in Section~\ref{sec:db} is similar to the categorical presentation in
\cite{Schultz2015, Wisnesky2015, Schultz2017}, but our data tables are built from measures (not
sets) in order to flexibly address the errors that are inevitable in applications.
Finally, other recent work by Abramsky, Morton, and collaborators uses obstruction
theory (in the sheaf-theoretic context) to detect non-contextuality in quantum
theory, with an application to  the non-existence of a universal data table
that contains a set of given tables
\cite{Abramsky2013,Abramsky2015,Morton2017}.
We expect that further interweaving of these measure-theoretic,
sheaf-theoretic, and simplicial/categorical perspectives will be fruitful in the future.

\subsection{Outline}
\label{sec:Out}

The rest of this paper is organized as follows.
Section~\ref{sec:db} defines the basic object of study, a \emph{data complex}, and draws a mapping between its simplicial set structure and the choices that must be 
made by any database administrator.
Categorical language is alluded to in this section, but a full categorical treatment of data complexes is confined to the Appendix.
Section~\ref{sec:homotopy} connects simplicial homotopy to the notion of JOIN, and shows how obstruction theory detects the impossibility of merging.
Section~\ref{sec:FO} describes our notion of persistent obstruction theory and its application to the idea of fuzziness of consensus.
The paper concludes with discussion of practical considerations for
applications in Section~\ref{sec:Disc}.

\section*{Acknowledgments}

Work by all three authors was partially supported by the DARPA Simplex Program, under contract \# HR001118C0070.
The last two authors were also partially supported by the Air Force Office of Scientific Research under grant AFOSR FA9550-18-1-0266.

We are very grateful to John Paschewitz and Tony Falcone for project guidance and technical direction, and to
Greg Friedman, Justin Curry, Jose Perea, and Jonathan Mattingly for helpful discussions at various stages of the theoretical development.

\section{Attributes and Data Tables}
\label{sec:db}

This section provides a
practical developmental discussion of a Data Subcomplex that should be accessible
to a fairly wide mathematical audience, with full categorical language found in the Appendix. The basic definitions appear in Sections \ref{sec:DS} and \ref{sec:operations}, culminating in Theorem \ref{thm:sset} which shows that we have indeed defined
a simplicial set. Operations that are specifically useful to standard database operations (inclusion/merge/join) are defined in Section \ref{sec:IMJ}.
Then Section \ref{sec:sec} makes plain the analogue of ``section of a bundle,'' which permits the rephrasing of our fundamental problems in mathematical language, and Section \ref{sec:chains} defines the (co)homology of data complexes needed for obstruction theory.

\subsection{Data Subcomplex as a Simplicial Set}
\label{sec:DS}
Our definitions are aimed at making precise the following real-life scenario in data
administration.
\begin{enumerate}
\item The administrator chooses a set $A$ of all attributes (column names and variable
types) of interest.
\item For each attribute $a$ in the list $A$, the administrator chooses a space
of possible values, and a ``reasonable'' metric $\rho_a$ that can provide the
distance between any two values in that space.  Our notion of ``reasonable''
includes compactness, which is typically guaranteed by boundedness of
realistic integer or vector-valued entries.
\item The administrator acquires ``data'' for some
lists of attributes, and attempts to reconcile these into a joint view
across all attributes in $A$.   The reconciliation process involves ``join''
operations that could be represented by SQL commands such as
\lstinputlisting[language=SQL,frame=single]{join.sql}
\item When reconciling, the administrator may choose
to alter the data, as long as the alterations are ``small'' with respect to
both the individual values via $\rho_a$ and with respect to the overall
information-theoretic content of the data. 
\end{enumerate}

The former two items are choices that must be made.  The latter two items are a
process to be accomplished.  The mathematical structure developed here 
is informed deeply by the example SQL command, as discussed in
Section~\ref{sec:IO}.

Let us define our objects.  It is convenient to use language of category
theory; see Appendix~\ref{sec:sset} for our conventions.

Consider a finite set $A$.  The elements are called
\emph{attributes}.  For each attribute $a \in A$, there is a
compact metric space $(\Val(a), \rho_a)$, called the \emph{value
space}.\footnote{These assumptions imply that $\Val(a)$ is complete, separable and is a Radon space.  In
many applications, the space $\Val(a)$ is finite or a closed interval in $\mathbb{R}$, so one needn't imagine
esoteric spaces to grasp the theory.}
These assumed objects (the finite set of attributes and
a compact metric space assigned to each attribute) are user-supplied by
a data administrator; after these choices are made, everything else proceeds as defined.

Each $\Val(a)$ is a Radon space (in particular, a measurable space) using the usual Borel algebra from the
metric $\rho_a$.
These metrics will be used in Section \ref{sec:FO} to quantify levels
of acceptable imprecision when marginalizing measures.

An \emph{attribute list} $T = [a_0, a_1, \ldots, a_n]$ is a finite sequence of
attributes; that is, an attribute list is a function $T:\{0,\ldots, n\} \to A$.
The \emph{length} of an attribute list is $\len(T) \defeq n+1$.
An attribute list $T$ is called \emph{nondegenerate} if it contains no
repetitions; that is if the function $T$ is one-to-one.
The longest nondegenerate attribute lists are permutations of $A$.

For any attribute list $T$, the product space $\Val(T) \defeq \prod_{i=0}^n \Val(a_i)$
is well-defined.  The product space $\Val(T)$ 
admits the $L^\infty$ metric $\rho_T = \max_{a \in T} \rho_a$ 
and is measurable via the corresponding tensor-product algebra.\footnote{We use the $\infty$-metric for
ease of proof when studying filtrations.  Other $p$-metrics or more general
product metrics might carry the whole theory, too, but we have not yet verified this.}
For any list $\tilde{A}$ representing a permutation of the set $A$,
then $\Val(\tilde{A})$ is the correspondingly ordered total product of all the measurable spaces of all attributes. 
At the other extreme, we equip the empty attribute list $[ ]$, of length $0$, with the
\emph{trivial value space} as $\Val([]) = \{*\}$, a singleton set.  

\begin{defn}[Set of Attribute Lists]
Let $\sS{A}$ denote the set of all attribute lists in $A$.
For each $n \geq -1$, let $\sS{A}_n\subset \sS{A}$ denote the set of all attribute lists of
length $n{+}1$.
$\sS{A}$ is a small category.
Using the notation from Appendix~\ref{sec:sset}, an object in $\sS{A}$ is a function $T:\mathbf{n} \to A$.
The case  $n=-1$, giving the empty list $T=[ ]$, is allowed.
A morphism of attribute lists $T \to T'$ is 
given by $\ell:\mathbf{n}' \to \mathbf{n}$
(an order-preserving function, which is a morphism of $\mathbf{\Delta_a}$ as in
Appendix~\ref{sec:sset}) 
such that $T' = T \circ \ell$, which is natural for the commutative diagram
\eqref{eqn:pullback}.
\begin{equation}
\begin{tikzpicture}[node distance=2cm]
\node (n) {$\mathbf{n}$};
\node[below of=n] (A) {$A$};
\node[right of=n] (np) {$\mathbf{n'}$};
\node[below of=np] (Ap) {$A$};
\draw[->] (np) to node[above] {$\ell$} (n); 
\draw[->] (Ap) to node[above] {$=$} (A); 
\draw[->] (n) to node[left] {$T$} (A); 
\draw[->] (np) to node[right] {$T'$} (Ap); 
\end{tikzpicture}
\label{eqn:pullback}
\end{equation}
\end{defn}
In Section~\ref{sec:operations} it is shown that 
for $n\geq 0$, each $\sS{A}_n$ is equipped with face maps $d_i:\sS{A}_n \to
\sS{A}_{n-1}$ (by omission of the $i$th element as in Defn \ref{defn:FA}) and degeneracy maps
$s_i:\sS{A}_{n} \to \sS{A}_{n+1}$ (by repetition of the $i$th element as in Defn \ref{defn:DA}).
When omitting the trivial $-1$-level, $\sS{A}$ is the simplicial set whose
elements are  generated  by the permutations of $A$ via the face and degeneracy
maps.  Including the trivial $-1$-level, $\sS{A}$ is the augmented
simplicial set generated this way.  See Appendix~\ref{sec:sset}
for a summary of the standard definition of (augmented) simplicial sets.

For any attribute list $T$, let $\Msr(T)$ denote the space of finite measures
on $\Val(T)$.
A \emph{data table} is a pair $(T, \tau)$ for $\tau \in
\Msr(T)$ for any $T \in \sS{A}$.  
 Note that $\Msr([])\cong \mathbb{R}_{\geq 0}$, as a measure on
the singleton set $\Val([ ])$ is determined by the mass $M \geq 0$ of $\{*\}$.
A \emph{trivial data table} is any data table of the form $(T, \tau)$
where $T = []$  and $\tau = M \geq 0$ is a measure on the singleton set $\Val(
[ ]) = \{*\}$.
We sometimes abbreviate our notation for data tables from $(T,\tau)$ to $\tau$,
because any $\tau \in \Msr(T)$ is equipped with a domain (the measurable sets
in $\Val(T)$), so $T$ is understood in context.

In the first example alluded to in the introduction, we could have $T = [\text{age}, \text{height},
\text{weight}]$ with $\Val(\text{age}) = \{0,1,\ldots,150\}$ in integer years,
$\Val(\text{height}) = [0,500] \subset \mathbb{R}$ in centimeters,
and $\Val(\text{weight}) = [0,1000] \subset \mathbb{R}$ in 
kilograms, each with the standard metric.
The space $\Msr(T)$ would be the set of measures on the compact set
$\Val(T) \subset \mathbb{R}^3$ given by the product.
An attribute list $[\text{height},\text{height}]$ is also permissible, and might arise for example if heights were compared from two different sources (driver's license versus medical chart).

For practical purposes, because $\Val(T)$ is a compact metric space, one might
use the Radon--Nikodym theorem to write any $\tau \in \Msr(T)$ using a density
function with respect to the uniform\footnote{That is, the measure depends only
on $r$, for metric balls $B_r(x)$ of sufficiently small radius.} probability
measure on the compact set; however, for simplicity we use the language and
notation of measures instead of the language of functions and integrals.

\begin{defn}[Ambient Data Complex]
Given $A$, the \emph{ambient data complex} over $A$ is the set of all data tables,
\[\sS{X} = \{ (T, \tau)~:~T \in \sS{A},\ \tau \in \Msr(T)\}.\]
For $-1 \leq n$, let $\sS{X}_n = \{ (T,\tau) \in \sS{X} \,:\, T \in
\sS{A}_n,\ \tau \in \Msr(T)\}$.
Let $p:\sS{X} \to \sS{A}$ denote the forgetful map $p:(T,\tau) \mapsto T$.
\end{defn}

Theorem~\ref{thm:sset} shows that the ambient data complex is a simplicial set
(augmented when including $\sS{X}_{-1}$) with faces given by the
marginalization integrals (Definition \ref{defn:FT}) and degeneracies given by
Dirac diagonalizations or intersections (Definition \ref{defn:DT}). The ambient data complex
$\sS{X}$ is a small category, whose morphisms are generated by faces and
degeneracies. The forgetful functor $p$ is a simplicial map between the small
categories $\sS{X}$ and $\sS{A}$.

\begin{defn}[Data Subcomplex]
Given an ambient data complex $p:\sS{X} \to \sS{A}$, a \emph{Data Subcomplex}
is a subset/subcategory
$\sS{X}' \subseteq \sS{X}$ that is closed under the face and degeneracy maps
defined in \ref{defn:FT} and \ref{defn:DT}.
Because $p$ is a simplicial map, the attribute base \[\sS{A}' = p(\sS{X}') = \{
T \in \sS{A}_n \,:\, \exists n \geq -1,\  \exists (T,\tau) \in
\sS{X}'_n\}\] is a simplicial subset of $\sS{A}$.
\end{defn}

\begin{defn}[Finitely Generated]
A data subcomplex $p:\sS{S} \to \sS{B}$ of an ambient $p:\sS{X}\to\sS{A}$
is said to be \emph{finitely generated} iff there is a finite set
$\{(T_1,\tau_1), \ldots, (T_K,\tau_K)\} \subset \sS{S}$ such that every data
table in $\sS{S}$ is obtained from this finite set via face and degeneracy maps.
We write $\sS{S} = \pair{(T_1,\tau_1), \ldots, (T_K,\tau_K)}$ 
or just $\sS{S} = \pair{\tau_1,\ldots,\tau_K}$.
\end{defn}

\begin{defn}[Closed under Permutation]
A subset $\sS{B}$ of $\sS{A}$
is said to be \emph{closed under permutation} iff for any $T \in
\sS{S}$ with $\len(T) = n{+}1$ and for any permutation (that is, bijection)\footnote{Note that a
nontrivial permutation is \emph{not} morphism in $\mathbf{\Delta_a}$.} 
$\varsigma:\{0,\ldots,n\}\to \{0,\ldots,n\}$, there exists
$\tilde{T} = T \circ \varsigma \in \sS{B}$.
A data subcomplex $p:\sS{S} \to \sS{B}$ of an ambient $p:\sS{X} \to \sS{A}$
is said to be \emph{closed under permutation} iff for any $(T,\tau) \in
\sS{S}$ with $\len(T) = n{+}1$ and for any permutation
$\varsigma:\{0,\ldots,n\}\to \{0,\ldots,n\}$, there exists
$(\tilde{T},\tilde{\tau}) \in \sS{S}$ 
and such that the measure $\tilde{\tau}$ is evaluated on the basis sets
$U_{\varsigma(0)} \times \cdots \times U_{\varsigma(n)}$ of the Borel algebra
of $\Val(\tilde{T})$ by
\[ \tilde{\tau}(U_{\varsigma(0)} \times \cdots \times U_{\varsigma(n)})=
 \tau(U_0 \times \cdots \times U_n).\]
\end{defn}

\begin{rmk}
Actual database merging problems encountered in real-life situations such as Problems \ref{GPtest}--\ref{GPmetamerge} always present themselves as Finitely
Generated Data Subcomplexes, because there is some finite set of database
tables or spreadsheets under consideration. 
The face and degeneracy maps provide the
logical relations between these tables that allow or prevent joining.
Real-life situations are also closed under permutation; because,
the ``\texttt{SELECT * FROM \dots}'' clause in SQL allows the database engineer
to re-order the columns of any table.
In our earlier example, a data table given by listing patients' age-by-height-by-weight might be
permuted to height-by-weight-by-age simply by reordering the columns of the
spreadsheet.
\end{rmk}

\textbf{Notational Note!}
We always use $p:\sS{X} \to \sS{A}$ to refer to an ambient data complex.  
We use either $p:\sS{X}' \to \sS{A}'$ or $p:\sS{S} \to \sS{B}$ to refer to a data subcomplex of $p:\sS{X} \to \sS{A}$.  
We tend to use $p:\sS{S} \to \sS{B}$ when we imagine that this data subcomplex
came from an actual data merging problem (so it is likely to be finitely generated and closed under permutation); however, we state explicitly these conditions when they are required for a result.
When the projection $p$ and the attribute simplicial sets $\sS{A}, \sS{B}$ are not used in a statement, we omit them and write ``a data subcomplex $\sS{S}$ of an ambient $\sS{X}$.'' 

\subsection{Morphisms of Data Tables}
\label{sec:operations}
This section establishes notation for common operations and proves that
$\sS{A}$ and $\sS{X}$ are simplicial sets, establishing that they are small
categories with morphisms that are well-understood in language of measures.

\begin{defn}[Face of Attribute List]
\label{defn:FA}
The face map on attribute lists, $d_i:\sS{A}_n \to \sS{A}_{n-1}$, is defined as
omission of the $i$th entry $a_i$ in $T=[a_0,\ldots,a_i,\ldots,a_n]$, so 
\[d_i [a_0,\ldots,a_{i-1},a_i,a_{i+1},\ldots,a_n] =
[a_0, \ldots, a_{i-1}, a_{i+1}, \ldots, a_n].\]
\end{defn}
\begin{rmk}[Categorical Interpretation]
In Definition~\ref{defn:FA}, $d_i T= T\circ d^i = (d^i)^* T$, where
$d^i:\mathbf{n-1} \to \mathbf{n}$ is
the co-face monomorphism in $\mathbf{\Delta_a}$, as in Appendix~\ref{sec:sset}.
\end{rmk}

\begin{defn}[Face of Data Table]
\label{defn:FT}
For a data table $(T,\tau) \in \sS{X}_n$ with $T = [a_0, \ldots, a_i, \ldots,
a_n]$, let $d_i(\tau) \in \Msr(d_i(T))$ be
the measure evaluated on the basis sets $U_0 \times \cdots \times U_{i-1} \times
U_{i+1} \times \cdots \times U_n$ of the Borel algebra on $\Val([a_0, \ldots, a_{i-1},
a_{i+1}, \ldots, a_n])=\Val(d_i T)$ as
\[ 
d_i(\tau)(U_0 \times \cdots U_{i-1} \times U_{i+1} \times \cdots \times U_n)
\defeq
\tau(U_0\times \cdots \times U_{i-1} \times  \Val(a_i) \times U_{i+1}\times \cdots \times U_n).\]
This is the measure obtained by marginalization to omit the $i$th factor, which
could also be written as $d_i \tau \defeq \int_{\Val(a_i)}\tau$.  
Let $d_i(T,\tau)\defeq(d_iT,d_i\tau)$, which is well-defined in $\sS{X}_{n-1}$.
\end{defn}

In our earlier example with individual patients as atomic point-masses,
the face map $d_0$ from age-by-height-by-weight to height-by-weight represents 
deleting the age column of the spreadsheet, and allowing new duplicate entries
to add (that is, integrate) measure.

Face maps can be applied multiple times, and the following lemma provides the
desired re-ordered ``commutation'' property.   
For attribute lists the proof is immediate; for data tables it is the
Fubini--Tonelli Theorem applied to the measures. 
\begin{lemma}[Fubini--Tonelli Theorem]
For any $i<j$, $d_i \circ d_j = d_{j-1} \circ d_i$.
\label{lem:FT}
\end{lemma}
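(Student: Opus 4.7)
The plan is to handle the attribute-list case and the data-table case separately, since the statement conflates two identities that live at different levels. For the attribute-list case, I would just chase indices: starting from $T=[a_0,\ldots,a_n]$ and applying $d_j$ then $d_i$ (with $i<j$) deletes $a_j$ and then $a_i$, giving the list $[a_0,\ldots,\widehat{a_i},\ldots,\widehat{a_j},\ldots,a_n]$. Reversing the order, $d_i$ first deletes $a_i$, after which the entry originally at position $j$ now sits at position $j-1$; so $d_{j-1}$ then deletes exactly that same entry $a_j$. The constraint $i<j$ is crucial for the index shift, and the identity reduces to the well-known cosimplicial identity for $d^i$ in $\mathbf{\Delta_a}$, which one can read off diagram \eqref{eqn:pullback}.

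For the data-table case, I would unwind Definition \ref{defn:FT} twice and reduce the assertion to the Fubini--Tonelli theorem. Evaluating $d_i(d_j\tau)$ on a basis rectangle $U_0\times\cdots\times\widehat{U_i}\times\cdots\times\widehat{U_j}\times\cdots\times U_n$ in $\Val(d_id_jT)$ gives, by two applications of the defining formula, the value
\[
\tau\bigl(U_0\times\cdots\times U_{i-1}\times\Val(a_i)\times U_{i+1}\times\cdots\times U_{j-1}\times\Val(a_j)\times U_{j+1}\times\cdots\times U_n\bigr).
\]
An identical unwinding of $d_{j-1}(d_i\tau)$ produces the same rectangle, with $\Val(a_i)$ and $\Val(a_j)$ inserted in the same two slots; the shift from $j$ to $j-1$ in the outer index is absorbed by the fact that $d_i\tau$ lives on a list whose $(j-1)$st entry is the old $a_j$. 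So on rectangles both iterated faces produce identical evaluations of $\tau$, and since rectangles generate the product Borel algebra on $\Val(T)$, equality of the two measures follows once one knows the rectangle evaluations determine a measure uniquely.

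The only real content is the interchange of the two marginalizations, which is where Fubini--Tonelli enters: one must know that $\int_{\Val(a_i)}\int_{\Val(a_j)}\tau = \int_{\Val(a_j)}\int_{\Val(a_i)}\tau$ as measures on the remaining factors. This is immediate because $\tau$ is a finite (hence $\sigma$-finite) non-negative measure on a product of Radon spaces, and $\Val(a_i)\times\Val(a_j)$ is a measurable rectangle. The main obstacle, if any, is not analytic but bookkeeping: one has to be careful that the re-indexing on the outside ($d_j\mapsto d_{j-1}$) lines up with the re-indexing on the value spaces after the first face is applied, so that the two measures are actually defined on the same product space before one compares them. I would make this explicit by writing the list of factors of $\Val(d_id_jT)=\Val(d_{j-1}d_iT)$ once and checking that both iterated definitions evaluate against it correctly.
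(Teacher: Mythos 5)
Your proposal is correct and takes essentially the same route as the paper, which disposes of the attribute-list case as immediate index-chasing and of the data-table case by evaluating both iterated marginalizations on the Borel basis rectangles of $\Val(T)$ and invoking Fubini--Tonelli. You simply spell out in more detail the bookkeeping (the insertion of $\Val(a_i)$ and $\Val(a_j)$ into the same two slots, and the uniqueness of a finite measure determined on a generating family of rectangles) that the paper leaves implicit.
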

\begin{defn}[Degeneracy of Attribute List]
\label{defn:DA}
The degeneracy map on attribute lists, $s_i:\sS{A}_n \to \sS{A}_{n+1}$, is defined
as repetition of the $i$th entry $a_i$ in $T=[a_0, \ldots, a_i, \ldots, a_n]$, so $s_i T =
[a_0, \ldots, a_i, a_i, \ldots, a_n]$.
\end{defn}

\begin{rmk}[Categorical Interpretation]
In Definition~\ref{defn:DA}, $s_i T= T\circ s^i = (s^i)^* T$, where
$d^i:\mathbf{n+1} \to \mathbf{n}$ is
the co-degeneracy epimorphism in $\mathbf{\Delta_a}$, as in Appendix~\ref{sec:sset}.
\end{rmk}

\begin{defn}[Degeneracy of Data Table]
\label{defn:DT}
For a data table $(T,\tau) \in \sS{X}_n$, let $s_i(\tau) \in \Msr(s_i(T))$ be
the measure evaluated on the basis sets $U_0 \times \cdots U_i \times U_i'
\times \cdots U_n$ of the Borel algebra on $\Val([a_0, \ldots, a_i,
a_i, \ldots, a_n]) = \Val(s_i T)$ as
\[ 
s_i(\tau)(U_0 \times \cdots U_i \times U_i' \times \cdots \times U_n)
\defeq
\tau(U_0\times \cdots \times (U_i \cap U_i') \times \cdots \times U_n).\]
Then, $s_i(T,\tau)=(s_iT,s_i\tau)$ is well-defined in $\sS{X}_{n+1}$.
\end{defn}
If the measure is expressed as a density function via the Radon--Nikodym theorem, then this is the Dirac-delta
\[s_i(\tau)(x_0, \ldots, x_i, x_i', \ldots, x_n) 
\defeq \tau(x_0, \ldots, x_i, \ldots, x_n) \delta(x_i, x_i'),\]

\begin{thm}[Simplicial Sets]
Let $\sS{X}$ be the ambient data complex over an attribute set $A$.
For any $(T,\tau) \in \sS{X}_n$, consider the face maps $d_i(T,\tau)$ and degeneracy maps 
$s_i(T,\tau)$ as in the definitions above.
Then
\begin{enumerate}
\item $d_i \circ d_j = d_{j-1} \circ d_i$, if $i<j$;
\item $d_i \circ s_j = s_{j-1} \circ d_i$, if $i<j$;
\item $d_j \circ s_j = d_{j+1} \circ s_j= \mathrm{id}$;
\item $d_i \circ s_j = s_j \circ d_{i-1}$, if $i > j+1$; and 
\item $s_i \circ s_j = s_{j+1} \circ s_i$, if $i \leq j$.
\end{enumerate}
Moreover, the forgetful map $p:\sS{X}\to\sS{A}$ commutes with $d_i$ and $s_i$.
That is, $\sS{X}=(\sS{X}_n,d,s)$ and $\sS{A} = (\sS{A}_n,d,s)$ are augmented
simplicial sets as in Lemma~\ref{lem:generate}.   They are simplicial sets when omitting the trivial
$\sS{X}_{-1}$ and $\sS{A}_{-1}$.
\cite[Defn 3.2]{Friedman2008}\cite[Eqn (1.3)]{Seriesa}\cite[Defn 1.1]{May1967}
\label{thm:sset}
\end{thm}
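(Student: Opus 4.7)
The plan is to verify each of the five simplicial identities separately on attribute lists and on measures, and then observe the commutation with $p$ is automatic. For attribute lists the identities are pure bookkeeping: since $d_i T = T \circ d^i$ and $s_i T = T \circ s^i$ where $d^i, s^i$ are the standard co-face and co-degeneracy maps in $\mathbf{\Delta_a}$, the five identities are the contravariant images of the well-known cosimplicial identities in $\mathbf{\Delta_a}$ (e.g.\ \cite[Defn 1.1]{May1967}). One could also check them directly by comparing the $i$th-and-$j$th positions in the tuple $[a_0,\ldots,a_n]$, but the categorical route makes the attribute-level half immediate.

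For the measure-level half, I would evaluate both sides on rectangular basis sets of the relevant product Borel algebra, using Defns.\ \ref{defn:FT} and \ref{defn:DT}. Identity (1) is exactly the content of Lemma~\ref{lem:FT}: double marginalization over $\Val(a_i)\times\Val(a_j)$ is independent of the order, the only arithmetic point being that after removing the $i$th factor the position of $a_j$ shifts from $j$ to $j-1$. Identity (5) is equally direct: both $s_i s_j\tau$ and $s_{j+1} s_i \tau$, when $i\leq j$, are the measures on the doubly-inflated product that, on a basis rectangle, replace the pair of factors at positions $i,i+1$ and the pair at positions $j+1,j+2$ by the intersections of the corresponding $U$'s, and one checks that both orderings yield the same prescription.

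Identity (3) is the telescoping of a degeneracy against an adjacent face: applied to a basis set $U_0\times\cdots\times U_n$, the degeneracy $s_j$ produces the value $\tau(U_0\times\cdots\times(U_j\cap U_j')\times\cdots\times U_n)$, and applying either $d_j$ or $d_{j+1}$ replaces $U_j'$ (resp.\ $U_j$) by $\Val(a_j)$, so the intersection collapses to $U_j$ (resp.\ $U_j'$) and the measure reduces to $\tau(U_0\times\cdots\times U_n)$. The mixed identities (2) and (4) require only disjoint index bookkeeping: when $i<j$ (resp.\ $i>j+1$) the marginalized position and the diagonalized pair never overlap, so the evaluations on basis sets commute and reduce to the same expression once the indices are shifted according to whether the marginalization occurs to the left or right of the repeated factor. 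After verifying the five identities on $\sS{X}$, the same identities on $\sS{A}$ follow by applying $p$ (which acts only on the first coordinate), and the statement that $p$ commutes with $d_i$ and $s_i$ is immediate from the componentwise definitions $d_i(T,\tau)=(d_iT,d_i\tau)$ and $s_i(T,\tau)=(s_iT,s_i\tau)$.

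The only genuine analytic ingredient is Fubini--Tonelli for identity (1), which is already invoked as Lemma~\ref{lem:FT}; everything else is a careful but routine index calculation on rectangular Borel sets. Accordingly, I expect no serious obstacle, though the main care is in handling the index shifts in the mixed identities (2) and (4) so that one does not accidentally misidentify, say, the position of a marginalized factor after a diagonal has been inserted to its left.
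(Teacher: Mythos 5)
Your proposal is correct and follows essentially the same route as the paper's own (very terse) proof: verify each identity on the rectangular Borel basis sets $U_0\times\cdots\times U_n$, with the $d_id_j$ relation handled by Fubini--Tonelli (Lemma~\ref{lem:FT}) and the remaining identities by index bookkeeping. The only quibble is a harmless swap in identity (3) of which copy of $U_j$ versus $U_j'$ gets replaced by $\Val(a_j)$ under $d_j$ versus $d_{j+1}$; the conclusion is unaffected.
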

\begin{proof}
This is direct with no surprises, by working on the Borel basis sets $U_0\times\cdots\times U_n$ for
$\Val([a_0,\ldots,a_n])$. The $d_id_j$ condition was already seen as Fubini--Tonelli.
\end{proof}

\subsection{Inclusions, Merges, and Joins}
\label{sec:IMJ}
We now establish\footnote{We are particularly indebted to Tony Falcone for technical discussions that motivated the formalism in this subsection.} additional operations (inclusion, sum, merge, join) that are special
to $\sS{A}$ and $\sS{X}$ and do not apply to general simplicial sets.

\begin{defn}[Attribute Inclusions]
An \emph{attribute inclusion} 
\[[a_0, a_1, \ldots, a_{n'}] \into  [b_0, b_1, \ldots, b_{n}]\] is given by
a map
$ \iota:\{0,\ldots, n'\} \to \{0,\ldots,n\}$
such that
\begin{enumerate}
\item $\iota(i) \leq \iota(j)\ \text{if and only if}\ i \leq j\
\text{(order-preserving),}$
\item $a_i  = b_{\iota(i)}\ \text{(compatible), }$
\item $\iota$ is one-to-one (implying $n'\leq n$),
\end{enumerate}
Although $\iota$ itself is a map of index sets,  we use the compatibility
property to overload notation and write 
$\iota:[a_0,\ldots,a_{n'}]\into [b_0,\ldots,b_n]$.
\end{defn}
\begin{rmk}[Categorical Interpretation]
An attribute inclusion is a morphism $T \to T'$ in the category $\sS{A}$ such that 
$T'=T\circ \iota = \iota^* T$ where $\iota:\mathbf{n'} \to \mathbf{n}$ is a monomorphism in
$\mathbf{\Delta_a}$.  We overload notation (that is, omit the upper-star) and write
$\iota:T'\into T$.
The functor $\mathbf{\Delta_a} \to \sS{A}$ is contravariant, so attribute
inclusions are actually epimorphisms in $\sS{A}$; however, it is reasonable to
call them ``inclusions'' because the $\mathbf{n'}$-ordered multiset
$T'(\mathbf{n'})$ is an ordered subset of the $\mathbf{n}$-ordered multiset $T(\mathbf{n})$. One could avoid this overloaded notation by
working in the opposite category, but we decline to add another layer of
notation since the meaning is always clear in context.
\end{rmk}

\begin{ex}
Consider $A = \{a,b,c,d\}$.
Example attribute lists\footnote{The fact that these lists are in alphabetical order is merely aesthetic,
and is not required in the definition of an attribute list.}
 are $T' = [a,a,a,c,d]$ and $T=[a,a,a,a,a,b,c,c,d]$.
There are 20 possible inclusions $T' \into T$, which are obtained by choosing the
ordered image of the $a$'s and $c$'s.  One possible inclusion 
is 
\begin{equation}
\iota=\left\{ 
(0 \mapsto 0), 
(1 \mapsto 1),
(2 \mapsto 3),
(3 \mapsto 7),
(4 \mapsto 8)
\right\},
\end{equation}
which can be summarized as $\iota=[0,1,3,7,8]$.
We can abbreviate this by decorating the entries in $T$ that are included from
$T'$, 
\begin{equation}
\iota:[a,a,a,c,d]\into
[\underline{a},\underline{a}, a, \underline{a}, a, b, c, \underline{c},\underline{d}].
\end{equation}
\end{ex}

\begin{lemma}[Quotient Inclusion]
For any attribute inclusion $\iota:T' \into T$, there is an attribute list
$T/\iota$
(called the \emph{quotient}) that enumerates the entries of $T$ that are not in the
image of $\iota$.  This enumeration equips the quotient with an attribute inclusion
$\iota^c: (T/\iota) \into T$, and $\iota^c$ corresponds to the complimentary monomorphism
from Lemma~\ref{lem:compliment}.
\label{lem:quotient}
\end{lemma}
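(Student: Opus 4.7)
The plan is to construct $T/\iota$ and $\iota^c$ explicitly from the set-theoretic complement of $\iota$'s image, then verify they satisfy the three axioms of an attribute inclusion, and finally identify $\iota^c$ with the complementary monomorphism supplied by Lemma~\ref{lem:compliment}.

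First I would write $T:\mathbf{n} \to A$ and $\iota:\mathbf{n'} \to \mathbf{n}$, and consider the set-complement $C \defeq \{0,\ldots,n\} \setminus \iota(\{0,\ldots,n'\}) \subset \mathbf{n}$. Since $\iota$ is a strictly order-preserving injection with image of size $n'+1$, the set $C$ has exactly $n-n'$ elements. Enumerating $C$ in increasing order yields a unique order-preserving bijection $\iota^c : \mathbf{n-n'-1} \to C \subset \mathbf{n}$. I would then define the quotient attribute list by pulling $T$ back along this inclusion, $T/\iota \defeq T \circ \iota^c : \mathbf{n-n'-1} \to A$, which is automatically an attribute list in the sense of the paper.

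Next I would verify directly that $\iota^c$ satisfies the three conditions of an attribute inclusion of $T/\iota$ into $T$: the order-preserving property is built into the enumeration of $C$; the compatibility condition $(T/\iota)(i) = T(\iota^c(i))$ holds by the very definition $T/\iota = T \circ \iota^c$; and one-to-oneness holds because $\iota^c$ is a bijection onto $C$. These checks are routine and follow immediately from the construction.

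The final step is the categorical identification. I would invoke Lemma~\ref{lem:compliment}, which in $\mathbf{\Delta_a}$ associates to any monomorphism $\iota:\mathbf{n'}\to\mathbf{n}$ a complementary monomorphism whose image is the order-theoretic complement of $\iota(\mathbf{n'})$ inside $\mathbf{n}$. Since our $\iota^c$ is (by construction) precisely the order-preserving bijection from $\mathbf{n-n'-1}$ onto that complement, it is the monomorphism produced by Lemma~\ref{lem:compliment}, and pulling $T$ back along it is the same operation described in the statement. I expect no genuine obstacle: the only thing to be careful about is the indexing convention (the off-by-one coming from lengths versus top indices, and the contravariant overloading of notation between $\mathbf{\Delta_a}$ and $\sS{A}$ that the preceding remark warns about), so I would spell out the index arithmetic once to confirm that $\len(T/\iota) = \len(T) - \len(T')$ and that the diagram of monomorphisms commutes as required.
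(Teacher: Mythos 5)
Your construction is correct and is exactly the one the paper intends: the paper states this lemma without proof (illustrating it only with an example) and defers to the equally unproved Lemma~\ref{lem:compliment}, whose content is precisely your order-preserving enumeration of the set-theoretic complement of $\im\iota$. Your index check $\len(T/\iota) = \len(T) - \len(T')$ and the verification of the three attribute-inclusion axioms are the right routine details to record, and nothing further is needed.
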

\begin{ex}
Consider the earlier example of an attribute inclusion.
The quotient list is $T/\iota = [a,a,b,c]$.   
The quotient inclusion is 
\begin{equation}
\iota^c:[a,a,b,c] \into [a, a, \underline{a}, a, \underline{a}, \underline{b},
\underline{c}, c, d].
\end{equation}
\end{ex}

The next lemma and corollary make clear that face maps and attribute inclusions are related
tightly.

\begin{lemma}
Any face map $d_i:T\to d_iT$ in $\sS{A}$ is equipped with an attribute inclusion $d_iT \into T$ defined
by the index function that skips $i$, namely the co-face monomorphism $d^i$ in
$\mathbf{\Delta_a}$.  Its quotient inclusion is the index function $\{0\} \to
\{0,\ldots,n\}$ by $0\mapsto i$ that gives $[a_i]\into T$.
\end{lemma}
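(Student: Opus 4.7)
The plan is to verify directly from the definitions that the claimed index function satisfies the three conditions in the definition of an attribute inclusion, and then to compute the quotient explicitly.

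First, I would recall the co-face monomorphism $d^i:\mathbf{n-1}\to\mathbf{n}$ from $\mathbf{\Delta_a}$, which sends $j\mapsto j$ for $j<i$ and $j\mapsto j+1$ for $j\geq i$, so that its image is $\{0,\ldots,n\}\setminus\{i\}$. This function is order-preserving and injective by inspection, giving conditions (1) and (3) of the attribute inclusion definition. For condition (2) (compatibility), I would observe that Definition~\ref{defn:FA} defines $d_iT$ precisely as the reindexing of $T$ that omits position $i$; equivalently, $(d_iT)(j)=T(d^i(j))$ for all $j\in\{0,\ldots,n-1\}$. This is exactly the compatibility $a_j = b_{d^i(j)}$ required of an attribute inclusion $d_iT\into T$.

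Second, for the quotient, I would unwind Lemma~\ref{lem:quotient}: $T/\iota$ enumerates (in order) those entries of $T$ whose index is not in the image of $\iota = d^i$. Since the image is $\{0,\ldots,n\}\setminus\{i\}$, the only missing index is $i$, so the enumeration has a single entry, namely $T(i)=a_i$. Hence $T/\iota=[a_i]$, and the quotient inclusion $\iota^c$ is forced to be the index function $\{0\}\to\{0,\ldots,n\}$ with $0\mapsto i$. Order-preservation, injectivity, and compatibility are all trivial because the source is a singleton. Finally, I would note that this $\iota^c$ matches the complementary monomorphism produced by Lemma~\ref{lem:compliment}, since that lemma is built precisely to enumerate the complement of the image of a monomorphism in $\mathbf{\Delta_a}$.

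There is no real obstacle here; the statement is essentially a bookkeeping identification between the face map of Definition~\ref{defn:FA} and the co-face monomorphism of $\mathbf{\Delta_a}$, together with a one-line computation of the set-theoretic complement of its image. The only thing to be careful about is keeping the two notational conventions straight (face maps are contravariant pullbacks of co-face maps, and the ``inclusion'' $d_iT\into T$ corresponds to the epimorphism in $\sS{A}$ that was noted in the categorical remark following the definition of attribute inclusions).
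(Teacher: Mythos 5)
Your verification is correct and is exactly the bookkeeping the paper has in mind: the paper states this lemma without proof, treating it as immediate from Definition~\ref{defn:FA}, the definition of attribute inclusion, and Lemma~\ref{lem:quotient}, which is precisely the chain of unwindings you carry out. Your identification of the image of $d^i$ as $\{0,\ldots,n\}\setminus\{i\}$ and the resulting singleton quotient $[a_i]$ with $\iota^c\colon 0\mapsto i$ is the intended argument, and your closing caution about the contravariance convention matches the paper's own categorical remark.
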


\begin{cor}
For any attribute inclusion $\iota:T' \into T$, there is a sequence\footnote{The backwards ordering here is intentional.
Because of the indexing situation and Lemma~\ref{lem:FT}, it is simpler to remove attributes from the
end. To remove an entire list, one could write $d_0 d_1 \cdots d_n T$ or
$d_0d_0\cdots d_0 T$, because $d_0$ is like ``pop'' on the front of the list.}  
of face maps $d_{j_0},\ldots,d_{j_k}$ such that $d_{j_0}\cdots d_{j_k} T = T'$
and such that the attribute inclusion induced by the sequence of face maps is
$\iota$. Moreover, any permutation of this sequence obtained by re-indexing
the face-maps according to Lemma~\ref{lem:FT} is equivalent.  If $j_0 \leq
\cdots \leq j_k$, then $j$ is the index function for $\iota^c$, the quotient inclusion.
\label{cor:include}
\end{cor}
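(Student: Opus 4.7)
The plan is to use Lemma~\ref{lem:compliment} (or equivalently, the quotient inclusion $\iota^c$ from Lemma~\ref{lem:quotient}) to identify precisely which positions of $T$ must be removed, then show that applying single-position face maps in the correct order removes exactly those positions. The moreover statement will follow immediately from the Fubini--Tonelli commutation relation in Lemma~\ref{lem:FT}.

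First I would unpack $\iota$. The image of $\iota$ is an ordered subset of the index set $\mathbf{n}$, and its complement has a unique order-preserving enumeration $j_0 < j_1 < \cdots < j_k$, where $k+1 = \len(T) - \len(T')$. By Lemma~\ref{lem:quotient}, this enumeration is precisely the index function for $\iota^c$. I would then define the composite $d_{j_0} d_{j_1} \cdots d_{j_k} T$, reading right-to-left so that $d_{j_k}$ is applied first. Because $j_{k-1} < j_k$, removing the $j_k$-th entry leaves the entries at positions $j_0, \ldots, j_{k-1}$ at their original indices in $d_{j_k} T$; inductively, at each subsequent step the next-to-be-removed index still points to the originally targeted entry. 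Hence $d_{j_0} \cdots d_{j_k} T = T'$ as unordered multisets-with-order, and the preceding lemma guarantees that at each stage the single face map contributes an attribute inclusion skipping exactly one index in the ambient list. Composing these inclusions yields an attribute inclusion from $T'$ into $T$ whose image is the complement of $\{j_0,\ldots,j_k\}$, which coincides with $\iota$.

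For the moreover statement, I would invoke Lemma~\ref{lem:FT}: $d_i \circ d_j = d_{j-1} \circ d_i$ when $i < j$. Any reordering of the sequence $d_{j_0}\cdots d_{j_k}$ can be realized by finitely many adjacent transpositions, each of which must be compensated by the $(j{-}1)$ index shift prescribed by the lemma. Since Lemma~\ref{lem:FT} is an equality between morphisms in $\sS{A}$ (indeed, an equality of functions on attribute lists), every such reordering produces the same composite map $T \to T'$, namely $\iota$. Finally, when $j_0 \leq \cdots \leq j_k$ is already the sorted order, this sequence of indices is tautologically the enumeration of the complement of $\im \iota$, which is the index function for $\iota^c$ by Lemma~\ref{lem:quotient}.

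The only subtlety, and the main place to be careful, is the bookkeeping of index shifts: one must commit to a convention (here, applying the largest-index face map first so that smaller indices are unaffected) and verify that under that convention the sorted sequence $j_0 \leq \cdots \leq j_k$ really does strip off the intended positions of the \emph{original} list $T$. Once this convention is fixed, everything else is a direct translation of Lemma~\ref{lem:compliment} from the co-face language of $\mathbf{\Delta_a}$ into the attribute-list language of $\sS{A}$.
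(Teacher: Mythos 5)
Your proposal is correct and matches the paper's intent: the paper states this corollary without proof, treating it as immediate from the preceding lemma and the fact (noted in the remark that follows) that co-face monomorphisms generate all monomorphisms in $\mathbf{\Delta_a}$. Your argument --- enumerating the complement of $\im\iota$ as the index function of $\iota^c$, stripping indices largest-first so that earlier targets are undisturbed, and invoking Lemma~\ref{lem:FT} for independence of the ordering --- is exactly the verification the authors left implicit.
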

\begin{rmk}
In light of Theorem~\ref{thm:sset}, Corollary~\ref{cor:include} is a partial
version of Lemma~\ref{lem:generate}, which says face maps and
degeneracy maps generate all the morphisms in a simplicial set. This is because
the co-face and co-degeneracy maps in $\mathbf{\Delta_a}$ generate all 
order-preserving maps.
\end{rmk}

Attribute inclusions provide surjections on value spaces and measures,
according to the following ``contravariant'' definition.
\begin{defn}[Reduction]
Consider an attribute inclusion $\iota:T' \into T$, where
$T'=[a_0,\ldots,a_{n'}]$
and $T= [b_0,\ldots,b_n]$.
Write an element of $\Val(T)$ as $(x_0,\ldots, x_n)$ where $x_i \in \Val(b_i)$
and write an element of $\Val(T')$ as $(y_0,\ldots,y_{n'})$ where $y_j \in
\Val(a_j)$.
Define the surjective function $\red_{\iota}:\Val(T) \onto
\Val(T')$ by  
\[  (x_0, \ldots, x_{n}) \mapsto  (y_0,\ldots,y_{n'}) =
(x_{\iota(0)},\ldots,x_{\iota(n')}).\]
Similarly, define the surjective function $\red_{\iota}:\Msr(T) \onto \Msr(T')$ 
by sequential application of face maps according to the previous corollary:
For any $\tau \in \Msr(T)$, let 
\[  \red_{\iota}(\tau)(U_{0}\times \cdots \times U_{n'})
\defeq \tau(W_0\times\cdots\times W_n),~\text{for}~W_i = 
\begin{cases}
U_{j},&\text{if $i=\iota(j)$}\\
\Val(a_{i}),&\text{otherwise}.
\end{cases}
\]
That is, $\red_{\iota}\tau$ is the measure on $\Val(T')$ obtained by
marginalizing $\tau$ to remove the factors specified by $\iota^c$.
\label{defn:red}
\end{defn}
When the attribute inclusion $\iota:T' \to T$ is understood from context, we
abuse notation and write $\red_{T'} \tau$ instead of $\red_{\iota} \tau$.
Note that $\tot \tau = (d_0 \cdots d_n) \tau = \tau(\Val(T)) = \int_{\Val(T)} \tau$, so we use this notation as
shorthand for ``the total integral of a measure.''

\begin{defn}[Sum of Attribute Lists]
Given attribute lists $T_1$ and 
$T_2$ in $\sS{A}$, define 
$T_1 \oplus T_2$ as the attribute list obtained by 
concatenating $T_1$ and $T_2$. 
\label{def:listsum}
\end{defn}

Note that $T_1 \oplus T_2$ and $T_2 \oplus T_1$ are related by a permutation,
which (excepting the identity permutation) does \emph{not} correspond to a morphism in the
categories $\sS{A}$ or $\mathbf{\Delta_a}$.
The concatenation process provides specific attribute inclusions
$T_1 \into T_1 \oplus T_2$ and $T_2 \into T_1 \oplus T_2$.
More generally, for attribute inclusion $\iota:T' \to T$ as in
Lemma~\ref{lem:quotient}, it is true that $T$ and $T' \oplus (T/\iota)$ are
related by a permutation; because,
the concatenation provides inclusions $T' \into T$ and $(T/\iota)\into T$ that 
may \emph{not} be the original $\iota$ and $\iota^c$.
On the other hand, for any sum $T = T_1 \oplus T_2$, it is true that $T_2$ is the
quotient of $T$ by the concatenation-induced inclusion of $T_1$, and
vice-versa.

Definition~\ref{def:listsum} implies $\Val(T_1 \oplus T_2) = \Val(T_1) \times \Val(T_2)$ and
$\Msr(T_1 \oplus T_2)$ are well-defined.
But, beware of multivariable calculus: $\Msr(T_1 \oplus T_2)
\supsetneq \Msr(T_1)\times \Msr(T_2)$,
as not every measure on a product space is an elementary product of measures!

\begin{ex}
Consider $T_1 = [a, a, a, c, d]$ and $T_2 = [a,a,b,c]$.
Then their sum is $T_1 \oplus T_2 = [a,a,a,c,d,a,a,b,c]$.  The concatenation
is equipped with inclusions 
\begin{equation}
\begin{split}
\iota_1 =\iota_2^c: [a,a,a,c,d]&\into
[\underline{a},\underline{a},\underline{a},\underline{c},\underline{d},a,a,b,c]\\
\iota_2=\iota_1^c: [a,a,b,c]&\into
[a,a,a,c,d,\underline{a},\underline{a},\underline{b},\underline{c}].
\end{split}
\end{equation}
\end{ex}

\begin{defn}[Permutation Notation]
Suppose that $T_{12}$, $T_{1}$, and $T_{2}$ are attribute lists such that 
$\varsigma(T_{1} \oplus T_{2}) = T_{12}$ for a permutation
$\varsigma$.
Then $\iota=\varsigma|_{T_1}:T_{1}\to T_{12}$ and 
$\iota^c=\varsigma|_{T_2}:T_{2}\to T_{12}$ are complimentary attribute inclusions.
If the permutation or attribute inclusions are well-known in context, then for
any subsets $U_1 \subseteq \Val(T_1)$ and $U_2 \subseteq \Val(T_2)$, let $U_1
\ptimes U_2 \in \Val(T_{12})$ denote the subset for which elements $x_1 \times
x_2 \in U_1 \times U_2 \subseteq \Val(T_1 \oplus T_2)$ and
$x_1 \ptimes x_2 \in U_1 \ptimes U_2 \subseteq \Val(T_{12})$ correspond with
respect to the $\varsigma$-permuted indices. 
\end{defn}
Because Lemma~\ref{lem:mergeind} provides an ordered form of the inclusion--exclusion
principle, we can define an indexed form of the inclusion--exclusion principle.

\begin{defn}[Merge of Attribute Lists]
Suppose $T_{0}, T_{01}, T_{02} \in \sS{X}$, and that $\iota_{01}: T_{0} \into
T_{01}$ and $\iota_{02}:T_{0} \into T_{02}$ are attribute inclusions.
Define $\Merge(T_{01},T_{02},\iota_{01} \sim \iota_{02})$ 
as the attribute list obtained by performing the index merge specified by
Figure\ref{mergeind} as in Lemma~\ref{lem:mergeind}; this merge concatenates sublists spliced between the
entries aligned by $\iota_{01} \sim \iota_{02}$.  Writing $T_{012}$ for
$\Merge(T_{01},T_{02},\iota_{01} \sim \iota_{02})$, Diagram~\ref{eq:split}
becomes a diagram of attribute inclusions. 
\begin{equation}
\begin{tikzpicture}[node distance=2.5cm]
\node (k0) {$T_0$};
\node[left of=k0] (k1) {$T_1$};
\node[right of=k0] (k2) {$T_2$};
\node[below left of=k0] (k01) {$T_{01}$};
\node[below right of=k0] (k02) {$T_{02}$};
\node[below right of=k01] (k012) {$T_{012}$};
\draw[->] (k0) to node[above] {$\iota_{01}$} (k01); 
\draw[->] (k0) to node[above] {$\iota_{02}$} (k02); 
\draw[->] (k1) to node[right] {$\iota_{01}^c$} (k01); 
\draw[->] (k2) to node[left] {$\iota_{02}^c$} (k02); 
\draw[dashed, ->] (k01) to node[above] {$\mu_{01}$} (k012); 
\draw[dashed, ->] (k02) to node[above] {$\mu_{02}$} (k012); 
\draw[dashed, ->] (k0) to node[left] {$\iota_{0}$} (k012); 
\draw[dashed, ->, bend right=45] (k1) to node[left] {$\iota_{1}$} (k012); 
\draw[dashed, ->, bend left=45] (k2) to node[right] {$\iota_{2}$} (k012); 
\end{tikzpicture}
\label{eq:splitT}
\end{equation}
\label{defn:merge}
\end{defn}
Note that the choice of ordering in Definition~\ref{defn:merge} and 
Figure~\ref{mergeind} is partially arbitrary.  In particular, one may draw an
equivalent diagram with any choice of interleaving pattern, as long as the $T_0$
entries remain fixed. 
However, this choice is irrelevant, as the theory developed in
Section~\ref{sec:homotopy} will encompass all allowable permutations.
Regarding the permutation notation introduced earlier,
for any Borel sets $U_0 \subseteq \Val(T_0)$,
$U_1 \subseteq \Val(T_1)$, and
$U_2 \subseteq \Val(T_2)$, we write 
$U_{0}\ptimes U_{1} \ptimes U_{2} \subseteq \Val(T_{012})$ for the appropriately permuted copy of the
set $U_{0} \times U_{1} \times U_{2}$ in $\Val(T_{0}\oplus T_{1} \oplus
T_{2})$, since the definition and algorithm give a well-defined permutation.
This $\ptimes$ notation is required in Theorem~\ref{thm:ft} and elsewhere.

\begin{ex}
Compare this to Example~\ref{ex:ind}.
Consider $A = \{a,b,c,d\}$.
Consider attribute lists  $T_{01} = [a,a,a,a,b,c]$ and $T_{02} = [a,a,b,b,d]$,
and $T_{0} = [a,b]$ with the attribute inclusions $\iota_{01} = [1,4]$ and
$\iota_{02}=[1,3]$. 
Visually, the merged indexing means
\[
\begin{split}
\iota_{01}:[a,b] &\mapsto [a,\underline{a},a,a,\underline{b},c] \\
\iota_{02}:[a,b] &\mapsto [a,\underline{a},b,\underline{b},d] \\
&\text{yields}\\
\iota_{0}:[a,b] &\mapsto [a, a, \underline{a}, a, a, b, \underline{b}, c,
d]\\
\mu_{01}:[a,a,a,a,b,c] &\mapsto
[\underline{a}, a, \underline{a}, \underline{a}, \underline{a}, b,
\underline{b}, \underline{c}, d]\\
\mu_{02}:[a,a,b,b,d] &\mapsto
[a, \underline{a}, \underline{a}, a, a, \underline{b}, \underline{b}, c,
\underline{d}]
\end{split}
\]
\end{ex}

Our choice of ordering in $\Merge( )$ provides that the trivial merge $\Merge(T_{01},T_{02},[ ]) = T_{01} \oplus T_{02}$
is the sum from Definition~\ref{def:listsum}.

As with Definition~\ref{def:listsum}, the attribute list
$\Merge(T_{01},T_{02}, \iota_{01}\sim\iota_{02})$ is well-defined regardless of the preference of
$T_{01}$ versus $T_{02}$ and regardless of the indices specified by
$\iota_{01}$ and $\iota_{02}$.  This list is identical to the list obtained by
constructing the sum $T_{01}\oplus T_{02}$ then applying face maps to remove the
image of $d_{\iota_{02}(i)}$ for each $i$ indexing $T_0$.
But, again beware that the partitioned merge-sort construction equips
$T_{012}$ with specific attribute inclusions
$T_{01} \into  T_{012}$ and $T_{02}\into T_{012}$ such that
the composed attribute inclusion $T_{0} \into T_{012}$ is
well-defined through both compositions.  In general, these inclusions are not
the same as the inclusions obtained through the ``sum and face''
construction.

\begin{lemma}[Decomposition of Merged Lists]
Suppose $T_{0}, T_{01}, T_{02} \in \sS{X}$, and that $\iota_{01}: T_{0} \into
T_{01}$ and $\iota_{02}:T_{0} \into T_{02}$ are attribute inclusions.
Let $T_{012}$ denote 
$\Merge(T_{01},T_{02},\iota_{01}\sim\iota_{02})$.
Let $\iota_{01}^c:T_{1}\into T_{01}$ and $\iota_{02}^c:T_{2}\into T_{02}$ denote the
complements of these inclusions, so $T_{1} \defeq T_{01}/\iota_{01}$ and $T_{2}
\defeq T_{02}/\iota_{02}$.

Then $T_{012}$ is partitioned by the inclusions $\iota_0:T_0 \into T_{012}$,
$\iota_1:T_1 \into T_{012}$, and $\iota_2:T_2\into T_{012}$.   That is,
$\Val(T_{012}) = \Val(T_0)\ptimes \Val(T_1)\ptimes \Val(T_2)$ is a permutation of $\Val(T_0)\times \Val(T_1) \times
\Val(T_2)$, with ordering of projections determined by the inclusion and
quotient maps, $\red_{\iota_0}$, $\red_{\iota_1}$, and $\red_{\iota_2}$.
\label{lem:decomp}
\end{lemma}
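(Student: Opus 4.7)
The plan is to extract the result directly from the merge-sort construction underlying $\Merge$ and Lemma~\ref{lem:mergeind}: once we verify that the three inclusions $\iota_0,\iota_1,\iota_2$ hit disjoint index positions in $T_{012}$, and that the total length accounts for every position, the product decomposition of $\Val(T_{012})$ is automatic (up to the permutation recorded by $\ptimes$).

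First I would do a length count. By the inclusion--exclusion form of Lemma~\ref{lem:mergeind}, $\len(T_{012}) = \len(T_{01}) + \len(T_{02}) - \len(T_0)$. Since the quotients satisfy $\len(T_i) = \len(T_{0i}) - \len(T_0)$ for $i=1,2$, adding gives $\len(T_0)+\len(T_1)+\len(T_2) = \len(T_{012})$. So if we can prove the three index images are pairwise disjoint, they must exhaust $T_{012}$.

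Next I would verify disjointness. The inclusion $\iota_0$ is defined as the composition $T_0\xrightarrow{\iota_{01}} T_{01}\xrightarrow{\mu_{01}} T_{012}$, equivalently $T_0\xrightarrow{\iota_{02}}T_{02}\xrightarrow{\mu_{02}}T_{012}$, and these agree by the commutative diagram~\eqref{eq:splitT} built into Definition~\ref{defn:merge}. Likewise $\iota_1 = \mu_{01}\circ\iota_{01}^c$ and $\iota_2 = \mu_{02}\circ\iota_{02}^c$. Within $T_{01}$, the images of $\iota_{01}$ and $\iota_{01}^c$ are complementary by Lemma~\ref{lem:quotient}, so after pushing forward by the monomorphism $\mu_{01}$ the images of $\iota_0$ and $\iota_1$ are disjoint in $T_{012}$; the same argument works for $\iota_0$ and $\iota_2$ through $\mu_{02}$. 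The remaining case---disjointness of $\iota_1$ and $\iota_2$---is exactly the content of the merge-sort construction: the algorithm referenced in Lemma~\ref{lem:mergeind} identifies index positions \emph{only} along the aligned pair $\iota_{01}\sim\iota_{02}$, and splices the $\iota_{01}^c$-positions of $T_{01}$ and the $\iota_{02}^c$-positions of $T_{02}$ into separate slots of $T_{012}$. Hence the image of $\iota_1$ consists of slots originating purely from $T_{01}\setminus\iota_{01}(T_0)$ and the image of $\iota_2$ from slots originating purely from $T_{02}\setminus\iota_{02}(T_0)$, which are disjoint. This step is the main obstacle, since it is the only part that really uses the specific combinatorics of $\Merge$ rather than just abstract properties of inclusions; it reduces to reading off the algorithm.

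Finally, I would translate the partition on indices into a product decomposition on value spaces. Given that the index set of $T_{012}$ is the disjoint union $\iota_0(T_0)\sqcup\iota_1(T_1)\sqcup\iota_2(T_2)$, the product $\Val(T_{012}) = \prod_{k}\Val(T_{012}(k))$ factors as the product over each piece, and compatibility of $T_{012}\circ\iota_j$ with $T_j$ (built into the definition of attribute inclusion) identifies each factor with $\Val(T_j)$. The resulting bijection $\Val(T_0)\times\Val(T_1)\times\Val(T_2)\to\Val(T_{012})$ is exactly the permutation of coordinates recorded by $\ptimes$, and the reduction maps $\red_{\iota_0}$, $\red_{\iota_1}$, $\red_{\iota_2}$ of Definition~\ref{defn:red} are the three coordinate projections of this product, which completes the proof.
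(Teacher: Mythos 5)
Your proposal is correct and follows essentially the same route the paper intends: the paper leaves Lemma~\ref{lem:decomp} without an explicit proof, treating it as an immediate consequence of the Merged Indexing Lemma~\ref{lem:mergeind} (which already asserts the length identity $n_{012}=n_{01}+n_{02}-n_0$ and the disjointness of the images of $\iota_0,\iota_1,\iota_2$ via the merge-sort algorithm), plus the observation that a partition of index positions induces the permuted product decomposition of $\Val(T_{012})$. Your write-up simply makes explicit the counting, the disjointness checks through $\mu_{01}$ and $\mu_{02}$, and the passage from indices to value spaces, all of which match the paper's implicit argument.
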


\subsection{Data Sections}
\label{sec:sec}
Because the forgetful map $p:\sS{X} \to \sS{A}$ acts like a projection, it allows a notion of section.
\begin{defn}[Data Section]
Consider a data subcomplex $p:\sS{X}'\to\sS{A}'$ of an ambient $p:\sS{X}\to\sS{A}$.
A \emph{data section} is a natural\footnote{Natural means that it respects the
face and degeneracy maps, as in \eqref{eqn:natsS} and Lemma~\ref{lem:generate}.} map $\sigma:\sS{A}'\to\sS{X}'$ such that $p \circ \sigma = 1_{\sS{A}'}$.
\end{defn}

\begin{rmk}
In Section~\ref{sec:FO}, data sections will be specified as $\sigma:\sS{A}'_n \to \sS{X}'_n$, on a
single level of the simplicial-set grading, where the other levels are inferred
by the face and degeneracy maps.  This omits all nondegenerate elements of level $n+1$, so is interpreted as a section on the $n$-skeleton.
\end{rmk}

The following definition captures a condition describing data subcomplexes
that are ``as compatible as possible.''
\begin{defn}[Well-Aligned]
A data subcomplex $\sS{X}'$ of an ambient $\sS{X}$ is called
\emph{well-aligned} if: for all $(T_{01},\tau_{01}), (T_{02},\tau_{02}) \in
\sS{X}'$ and all $T_{0}$ with attribute inclusions $\iota_{01}:T_{0}\into
T_{01}$ and $\iota_{01}:T_{0} \into T_{02}$, there exists $(T_0, \tau_0) \in \sS{X}'$ with  \[ \red_{\iota_{01}}\tau_{01} = \tau_{0} = \red_{\iota_{02}}\tau_{02}.\]
\label{def:well}
\end{defn}

The next lemma shows that well-aligned data subcomplexes in this theory play
the role of ``submanifolds transverse to the fiber'' from classical bundle theory and of ``holonomic submanifolds'' in geometric PDE theory.  That is, they represent local sections.
\begin{lemma}
Suppose that $p:\sS{X}' \to \sS{A}'$ is a data subcomplex of an ambient data complex $p:\sS{X} \to \sS{A}$ such that $\sS{X}'$ contains a nontrivial data table.  The following are equivalent:
\begin{enumerate}
\item $\sS{X}'$ is well-aligned.
\item There is a data section $\sigma:\sS{A}' \to \sS{X}'$ such that $\sigma(\sS{A}') = \sS{X}'$.
\item $p:\sS{X}' \to \sS{A}'$ is a simple cover via the isomorphism $p$.
\end{enumerate}
\end{lemma}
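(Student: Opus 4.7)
The plan is to prove the chain of implications $(1) \Rightarrow (2) \Rightarrow (3) \Rightarrow (1)$, with the real content residing in $(1) \Rightarrow (2)$ and the other two implications being essentially formal.

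For $(1) \Rightarrow (2)$, I would extract a section by first proving a uniqueness statement: for each $T \in \sS{A}'$ there is at most one $\tau \in \Msr(T)$ with $(T,\tau) \in \sS{X}'$. To see this, specialize Definition~\ref{def:well} to $T_{01}=T_{02}=T_0=T$ with $\iota_{01}=\iota_{02}=\mathrm{id}_T$; since $\red_{\mathrm{id}}$ is the identity on $\Msr(T)$ (as the formula in Definition~\ref{defn:red} reduces to $W_i=U_i$), well-alignedness forces $\tau_{01}=\tau_0=\tau_{02}$. Existence of a lift is immediate from $\sS{A}'=p(\sS{X}')$. Thus I can define $\sigma(T) \defeq (T,\tau_T)$ for the unique such $\tau_T$. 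The only substantive step is verifying naturality: for a face map $d_i$, the table $d_i\sigma(T)=(d_iT, d_i\tau_T)$ lies in $\sS{X}'$ by closure under faces, and then the uniqueness statement applied over $d_iT$ forces $d_i\sigma(T)=\sigma(d_iT)$. The identical argument with closure under degeneracies handles $s_i$. By construction $p\circ\sigma=1_{\sS{A}'}$, and $\sigma(\sS{A}')=\sS{X}'$ because every data table in $\sS{X}'$ is the unique lift of its own projection.

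For $(2) \Rightarrow (3)$, the identity $p\circ\sigma=1_{\sS{A}'}$ makes $\sigma$ injective and $p$ surjective on the image; the hypothesis $\sigma(\sS{A}')=\sS{X}'$ makes $\sigma$ surjective, so $\sigma$ is a levelwise bijection with two-sided inverse $p$. Naturality of $\sigma$ (inherited from being a data section) promotes this to an isomorphism of simplicial sets, which is precisely the condition that $p$ be a simple cover via the isomorphism $p$. For $(3) \Rightarrow (1)$, an isomorphism $p$ supplies for each $T_0\in\sS{A}'$ a unique lift $(T_0,\tau_0)\in\sS{X}'$. Given arbitrary $(T_{01},\tau_{01}),(T_{02},\tau_{02})\in\sS{X}'$ with inclusions $\iota_{01}:T_0\into T_{01}$ and $\iota_{02}:T_0\into T_{02}$, Corollary~\ref{cor:include} expresses each reduction $\red_{\iota_{0k}}$ as an iterated face composition; by closure under face maps, the two tables $(T_0,\red_{\iota_{01}}\tau_{01})$ and $(T_0,\red_{\iota_{02}}\tau_{02})$ both lie in $\sS{X}'$, and uniqueness identifies both with $(T_0,\tau_0)$, yielding Definition~\ref{def:well}.

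The main obstacle is the naturality check in $(1)\Rightarrow(2)$: one must simultaneously use closure of $\sS{X}'$ under $d_i$ and $s_i$ (to produce candidate lifts over $d_iT$ and $s_iT$) and the uniqueness consequence of well-alignedness (to force these candidates to coincide with $\sigma(d_iT)$ and $\sigma(s_iT)$). Everything else is routine bookkeeping. The nontriviality hypothesis enters to guarantee that $\sS{A}'$ contains attribute lists beyond the $-1$-level, so that the equivalence between well-alignedness and the existence of a genuine simplicial section is not vacuous.
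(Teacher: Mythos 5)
Your proposal is correct and follows essentially the same route as the paper: the paper likewise derives uniqueness of lifts by specializing well-alignedness to $T_{01}=T_{02}=T_0=T$, obtains the section from that uniqueness plus closure under face and degeneracy maps, gets the converse from commutation of $\sigma$ with the face maps, and treats the equivalence of (2) and (3) as immediate. The only (inessential) divergence is your reading of the nontriviality hypothesis: the paper invokes the $T_0=[\,]$ case to conclude that all tables share a common \emph{non-zero} mass $M$, rather than to rule out a purely $(-1)$-level complex.
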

\begin{proof}
(2) implies (1): Note that well-alignedness is implied by the commutation of $\sigma$ with the face maps.

(1) implies (2): 
The case of $T_{0} = []$ implies that all data tables in $\sS{X}'$
have the same mass, $M$, which is non-zero since $\sS{X}'$ contains at least one non-trivial table.  The
case of $T_{01} = T_{02} = T_{0} = T$ implies that each $T \in \sS{A}'$ admits
exactly one $(T,\tau) \in \sS{X}'$. 

It is immediate that (2) and (3) are equivalent.
\end{proof}

\begin{rmk}
A database engineer would appreciate a database system that could be described
as a well-aligned data subcomplex, because for each list of
columns present within any combination of the given tables, there is only one possible table; that is, for each $T$ there is exactly one
$(T,\tau)$.  Compare the well-aligned condition to the space of joins, Defn~\ref{def:joinspace}.
Note too that well-aligned implies finitely generated. 
(Not every finitely-generated data subcomplex is well-aligned, as it could have
multiple data tables over the same attribute lists.)
Moreover, if $\sS{X}'$ is well-aligned (and contains a nontrivial data table),
then all data tables can be re-scaled by their shared mass $M$ to yield probability measures.
\end{rmk}

In our definitions above, the set $A$ of attributes is finite, and each level $\sS{A}_n$ of the simplicial set $\sS{A}$ is finite.
Therefore, for any simplicial subset $\sS{A}' \subseteq \sS{A}$, we can consider the finite graph whose vertices are the 0-cells of $\sS{A}'_0$ (singleton attribute lists) and whose edges are the 1-cells $\sS{A}'_1$ (including loops, as degenerate 1-cells like $[a,a]$).

\begin{defn}[Connected]
Suppose that $p:\sS{X}' \to \sS{A}'$ is a data subcomplex of an ambient data complex $p:\sS{X} \to \sS{A}$ such that $\sS{X}'$ contains a nontrivial data table.
The simplicial set $\sS{A}'$ of attributes is called \emph{connected} if the finite graph with vertices $\sS{A}'_0$ and edges $\sS{A}'_1$ is connected.
\end{defn}

\begin{defn}[Path-connected]
A data subcomplex $\sS{X}'$ of an ambient $\sS{X}$ is called \emph{path-connected} if
for any attributes $a, b$  in $\sS{A}'$, there is sequence $(T_{01},\tau_{01}), (T_{12},\tau_{12}), \ldots, (T_{k-1,k},\tau_{k-1,k})$ in $\sS{X}'$ such that $a \in T_{0,1}$ and $b \in T_{k-1,k}$ and for all $i=1\ldots,k$
there is an attribute list $T_{i} \neq []$ equipped with inclusions $T_{i}\into T_{i-1,i}$ and $T_{i}\into T_{i,i+1}$ such that
\[\red_{T_i}\tau_{i-1,i} =\red_{T_i}\tau_{i,i+1}.\]
\end{defn}

\begin{lemma}
Suppose $\sS{A}'$ is connected as a simplicial set.
If $\sS{X}'$ is well-aligned, then $\sS{X}'$ is path-connected.
\end{lemma}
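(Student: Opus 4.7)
The plan is to lift the graph-level path from $\sS{A}'$ to a chain of data tables in $\sS{X}'$ by exploiting the fact that consecutive edges in the graph share a vertex, and well-alignedness forces the relevant marginalizations to a singleton attribute list to agree.

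First I would fix attributes $a, b \in \sS{A}'$ and invoke the connectedness hypothesis to extract a sequence of edges $e_1, \ldots, e_k \in \sS{A}'_1$ such that $a$ is a vertex (i.e., a face) of $e_1$, $b$ is a vertex of $e_k$, and consecutive edges $e_{i-1}, e_i$ share a common vertex, which I will call $v_i$ for $i = 2, \ldots, k$. Because $\sS{A}' = p(\sS{X}')$, each $e_i$ is the image under $p$ of at least one data table in $\sS{X}'_1$; choose such a lift $(e_i, \tau_i)$. (Under well-alignedness the earlier lemma in fact makes this lift unique, but that is not needed for path-connectedness.) This gives the candidate sequence $(T_{i-1,i}, \tau_{i-1,i}) \defeq (e_i, \tau_i)$ that I will verify against Definition of path-connectedness.

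Next, for each transition index $i = 2, \ldots, k$, observe that $v_i$ being a common vertex means there are attribute inclusions $[v_i] \into e_{i-1}$ and $[v_i] \into e_i$ obtained from the appropriate face map in each case. Setting $T_0 = [v_i]$, $T_{01} = e_{i-1}$, $T_{02} = e_i$, Definition~\ref{def:well} yields a data table $([v_i], \tau_{v_i}) \in \sS{X}'$ with
\[ \red_{[v_i]} \tau_{i-1} \;=\; \tau_{v_i} \;=\; \red_{[v_i]} \tau_i. \]
Declaring the intermediate list to be $T_i \defeq [v_i]$ supplies exactly the data required by path-connectedness, and crucially $T_i \neq []$ because $v_i$ is a genuine attribute. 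The endpoint conditions $a \in e_1$ and $b \in e_k$ are direct from the choice of path in the graph.

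The only genuine obstacle is the degenerate situation when the connecting graph walk has length $0$ (i.e., $a = b$) or length $1$ (the edge already contains both $a$ and $b$): in these cases the transitions in the previous paragraph are vacuous, and we simply need a single data table whose attribute list contains $a$ (and $b$). For length $1$, the lift $(e_1, \tau_1) \in \sS{X}'$ constructed above already has both $a$ and $b$ as vertices. For $a = b$, use that $[a] \in \sS{A}'_0$ so $\sS{X}'$ contains a table over $[a]$ (or apply a degeneracy to obtain $([a,a], s_0\tau_a) \in \sS{X}'_1$ to match the sequence format). In either case the definition is satisfied trivially, completing the proof.
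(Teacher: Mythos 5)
Your proof is correct. The paper states this lemma without any proof, and your argument --- lifting a walk of edges in the attribute graph $(\sS{A}'_0,\sS{A}'_1)$ to data tables in $\sS{X}'_1$ and invoking well-alignedness at each shared vertex with $T_0=[v_i]$ to obtain both the agreement of the marginalizations and their membership in $\sS{X}'$ --- is exactly the routine verification the authors evidently intended, with the degenerate walks ($a=b$ or a single edge) handled appropriately.
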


With the language of simplicial sets, we can now re-state our original
motivating questions.  The remaining sections of this document construct a
precise way to answer these questions, and ensure that the notion of
``distance'' is well-defined.
An appropriate notion of distance appears in Defn~\ref{def:wasserstein}.
When all the definitions and lemmas are in place, these problems are answered by
the Obstruction Cocycle in Defn~\ref{def:cocycle}.

\begin{prob}[Testing Problem, bis]
Consider a data subcomplex $p:\sS{S}\to \sS{B}$ of an ambient $p:\sS{X}\to\sS{A}$.
Given a data section $\sigma^+:\sS{A}_{n+1} \to \sS{X}_{n+1}$ of the form 
$\sigma^+:T^+ \mapsto (T^+,\sigma^+)$ for $T \in \sS{A}_{n+1}$, 
is it true that $\partial \sigma^+$ lies entirely within $\sS{S}_{n}$?
If not, what is the distance from $\partial \sigma^+$ to $\sS{S}_n$ in
$\sS{X}_n$?
\label{GPtest2}
\end{prob}

\begin{prob}[Merging and Meta-Merging Problems, bis]
Consider a data subcomplex $p:\sS{S}\to \sS{B}$ of an ambient $p:\sS{X}\to\sS{A}$.
Suppose that there is a simplicial map $\sigma:\sS{B}_n \to \sS{S}_n$ of the
form $\sigma:T  \mapsto (T,\sigma)$.
Does there exist an extension $\sigma^+:\sS{A}_{n+1} \to \sS{X}_{n+1}$ of
$\sigma$, meaning $\partial \sigma^+(T^+) = \sigma(\partial T^+)$ for all $T^+
\in \sS{A}_{n+1}$ such that $\partial T^+ \in \sS{B}_n$.
If not, what is the minimal distance that would allow an approximate
extension?
\label{GPmerge2}
\end{prob}

\subsection{Homology}
\label{sec:chains}
We use the traditional definition of chains, summarized here to fix notation.
Fix\footnote{For practical reasons we use $R =
\Zb = \mathbb{F}_2$ in applications; however, chains are sensible for any
ring.} a ring $R$.
For $k\geq 0$, a $k$-chain $Y \in C_k(\sS{A},R) \defeq C_k(\sS{A},\mathbb{Z})\otimes R$ is a formal linear combination
\begin{equation}
Y = \sum_j r_j T_j,\quad r_j \in R,\ T_j \in \sS{A}_k,
\end{equation}
where negative coefficients indicate formally reversed orientation.
We define $(-1)$-graded chains as elements of the 1-dimensional $R$-module, 
$C_{-1}(\sS{A}) = R\cdot\{[ ]\} \cong R$. 
For $k \geq 0$, define the usual simplicial boundary operator $C_k(\sS{A},R) \to
C_{k-1}(\sS{A},R)$, as  
\begin{equation}
\begin{split}
\partial &\defeq \sum_{i=0}^k (-1)^i d_i\\
\partial:& [a_0, \ldots, a_k] \mapsto  \sum_{i=0}^k (-1)^i[a_0, \ldots, \hat{a_i},
\ldots, a_k].
\end{split}
\end{equation}
It is immediate that $\partial:C_k(\sS{A},R) \to C_{k-1}(\sS{A},R)$ satisfies
$\partial^2=0$, so homology $H_\bullet(\sS{A},R)$ is well-defined.

For $k \geq 0$, a $k$-chain $(Y,\psi) \in C_k(\sS{X},R) \defeq C_k(\sS{X},\mathbb{Z})\otimes R$ is a formal linear combination
\begin{equation}
(Y,\psi) = \sum_j r_j (T_j, \tau_j),\quad r_j \in R,\ T_j \in \sS{A}_k,\ \tau_j \in \Msr(T_j),
\end{equation}
where negative coefficients indicate formally reversed orientation.
We can also define $(-1)$-graded chains as 
$C_{-1}(\sS{X},R)$, the $R$-module generated by  $\Msr([ ]) = R_{\geq 0}$. 
Moreover, for any $(T,\tau) \in \sS{X}_k$, note that $r(T,\tau)$ and
$(T,r\tau)$ are formally distinct unless $r=1_R$; hence, the graded module $C_\bullet(\sS{S},R)$ is \emph{very large}, especially if $\Val(a)$ is infinite for any $a\in A$.
For $k\geq 0$, define the usual simplicial boundary operator $C_k(\sS{X},R) \to
C_{k-1}(\sS{X},R)$, as  
\begin{equation}
\begin{split}
\partial &\defeq \sum_{i=0}^k (-1)^i d_i\\
\partial:& ([a_0, \ldots, a_k],\tau) \mapsto  \sum_{i=0}^k (-1)^i([a_0, \ldots, \hat{a_i},
\ldots, a_k], \red_{[a_0, \ldots, \hat{a_i}, \ldots, a_k]}\tau).
\end{split}
\end{equation}

The next lemma is easy, but important; it means the usual notions of \emph{cycle}/\emph{closed} and
\emph{boundary}/\emph{exact} apply to chains in $\sS{X}$.
\begin{lemma}
$\partial:C_k(\sS{X},R) \to C_{k-1}(\sS{X},R)$ satisfies $\partial^2=0$.
In particular, the homology $H_\bullet(\sS{X},R)$ is well-defined. 
\end{lemma}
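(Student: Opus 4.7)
The plan is to reduce the claim to the standard simplicial-chain-complex argument, leveraging the fact that Theorem \ref{thm:sset} has already done the real measure-theoretic work. Concretely, I would expand
\[
\partial\partial = \Bigl(\sum_{i=0}^{k-1}(-1)^i d_i\Bigr)\Bigl(\sum_{j=0}^{k}(-1)^j d_j\Bigr) = \sum_{0\le i\le k-1,\ 0\le j\le k}(-1)^{i+j}\, d_i\circ d_j
\]
acting on a generator $(T,\tau)\in \sS{X}_k$, split the double sum into the regions $i<j$ and $i\ge j$, and apply the first simplicial identity from Theorem \ref{thm:sset}, namely $d_i\circ d_j = d_{j-1}\circ d_i$ for $i<j$.

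After this substitution, in the $i<j$ piece I would reindex by $j'=j-1$, $i'=i$, so that the range $\{i<j\}$ becomes $\{i'\le j'\}$ and the sign picks up an extra factor of $-1$ from $(-1)^{j'+1}$. The result is exactly the negative of the $i\ge j$ piece after relabeling, so the two halves cancel termwise and $\partial\partial(T,\tau)=0$. Extending $R$-linearly over a general chain $(Y,\psi)=\sum_j r_j(T_j,\tau_j)$ gives $\partial^2=0$ on all of $C_k(\sS{X},R)$, and hence $H_\bullet(\sS{X},R)=\ker\partial/\operatorname{im}\partial$ is well defined in each degree. The edge cases $k=0,1$ (where $\partial$ lands in the distinguished $(-1)$-module $C_{-1}(\sS{X},R)$ generated by $\Msr([\,])\cong\mathbb{R}_{\ge 0}$) require only that $\tot$ is independent of the order in which one marginalizes, which is again the content of Lemma \ref{lem:FT}.

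The only thing to check carefully is that the simplicial identity really does hold at the level of measures and not merely at the level of attribute lists, because the generators of $C_\bullet(\sS{X},R)$ carry the measure data $\tau$ and $d_i$ is defined as a marginalization integral. This is exactly Lemma \ref{lem:FT} (Fubini--Tonelli), which asserts $d_i d_j = d_{j-1} d_i$ as maps on data tables, and which applies because each $\Val(a)$ is a Radon space and each $\tau$ has finite mass, so Fubini--Tonelli is justified with no integrability caveats. Since both the index-level and the measure-level identities coincide under the forgetful map $p$, the cancellation argument above is purely formal once Lemma \ref{lem:FT} is invoked.

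The main (and essentially only) obstacle is conceptual rather than technical: one must recognize that nothing new has to be proved here beyond Theorem \ref{thm:sset}, because $\partial^2=0$ is a general consequence of the first simplicial identity for any simplicial set with an $R$-linear chain module. Thus the proof is a one-line appeal to the standard argument, with the only content being the observation that the coefficient module $\Msr(T)$ behaves $R$-linearly under $\partial$ and that the simplicial identities on $\sS{X}$ established in Theorem \ref{thm:sset} are genuine identities of measures rather than merely of underlying attribute lists.
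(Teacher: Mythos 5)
Your proposal is correct and follows essentially the same route as the paper: both arguments reduce $\partial^2=0$ to the commutation identity $d_i\circ d_j = d_{j-1}\circ d_i$ for $i<j$, justified at the level of measures by Fubini--Tonelli (the paper cites this via Corollary~\ref{cor:include} and the finiteness, hence $\sigma$-finiteness, of each $\tau$), followed by the standard alternating cancellation of the double sum. The only cosmetic difference is that the paper illustrates the cancellation with an explicit $k=3$ example rather than writing out the reindexing $j'=j-1$, while you carry out the general index bookkeeping; the mathematical content is identical.
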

\begin{proof}
Suppose that $T = [a_0, a_1, \ldots, a_k]$ and that $\tau \in \Msr(T)$.
Recall that $\Val(T)$ is a product of the attributes' measurable spaces,
and by our definitions, the measure $(\Val(T),\tau)$ is
finite, therefore $\sigma$-finite, so the Fubini--Tonelli theorem holds.
In particular, Corollary~\ref{cor:include} shows that the reduction
$\red_{[a_0,\ldots, \hat{a_i}, \ldots, \hat{a_j},\ldots, a_k]}$ is symmetric,
so because the double-sum is alternating, all terms will cancel. 

For example, suppose $\tau = \tau_{0123}$ on $T = [a_0, a_1, a_2, a_3]$, where the
index shows which variables are still free.  Then
\begin{equation}
\begin{split}
\partial^2\tau_{0123} 
&= \partial(\tau_{123} - \tau_{023} + \tau_{013} - \tau_{012})\\
&= \left(\tau_{23} - \tau_{13} + \tau_{12}\right)
  -\left(\tau_{23} - \tau_{03} + \tau_{02}\right)
  +\left(\tau_{13} - \tau_{03} + \tau_{01}\right)
  -\left(\tau_{12} - \tau_{02} + \tau_{01}\right)\\
&= 0 \in C_{k-2}(\sS{X}).
\end{split}
\end{equation}
Note that it is irrelevant in this proof whether $T$ is a degenerate attribute list, as
repeated attribute value spaces are treated as distinct factors for the sake of
integration.
\end{proof}
Define the projection $p:C_k(\sS{X},R) \to C_k(\sS{A},R)$ by 
$p(T, \tau) \defeq T$ and extending by linearity.
\begin{lemma}
$\partial \circ p = p \circ \partial$
\end{lemma}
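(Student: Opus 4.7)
The plan is essentially to unwind the definitions on a single generator and then extend by linearity, so the proof will be a brief direct calculation rather than requiring new machinery. Both maps are $R$-linear, so it suffices to verify the identity on a basic $k$-chain $(T, \tau)$ with $T = [a_0, \ldots, a_k]$ and $\tau \in \Msr(T)$.

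First I would apply $p$ followed by $\partial$. By definition, $p(T, \tau) = T$, and then
\[
\partial(p(T,\tau)) = \partial T = \sum_{i=0}^k (-1)^i [a_0, \ldots, \hat{a_i}, \ldots, a_k] = \sum_{i=0}^k (-1)^i d_i T.
\]
Next I would apply $\partial$ followed by $p$. Using the formula for $\partial$ on $\sS{X}$,
\[
\partial(T,\tau) = \sum_{i=0}^k (-1)^i (d_i T,\ \red_{d_i T}\tau),
\]
and then, using $R$-linearity of $p$ together with $p(d_i T, \red_{d_i T}\tau) = d_i T$, I obtain
\[
p(\partial(T,\tau)) = \sum_{i=0}^k (-1)^i d_i T.
\]
Comparing the two expressions yields $\partial(p(T,\tau)) = p(\partial(T,\tau))$ on every generator.

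The only step with any content is observing that $p$ is blind to the measure-theoretic data and only records the underlying attribute list, so the marginalizations $\red_{d_iT}\tau$ that distinguish $\partial$ on $\sS{X}$ from $\partial$ on $\sS{A}$ collapse under $p$. This is precisely the statement that the forgetful map is simplicial (already noted in Theorem~\ref{thm:sset}: $p$ commutes with the face maps $d_i$), and since the boundary is an alternating sum of face maps, commutation with each $d_i$ upgrades immediately to commutation with $\partial$. There is no genuine obstacle; extending from generators to arbitrary chains by $R$-linearity completes the proof.
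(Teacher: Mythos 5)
Your proof is correct and follows essentially the same route as the paper: evaluate both composites on a generator $(T,\tau)$, observe that $p$ commutes with each face map so the alternating sums agree, and extend by $R$-linearity.
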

\begin{proof}
Suppose that $T = [a_0, a_1, \ldots, a_k]$ and that $\tau \in \Msr(T)$.
Then 
\begin{equation}
p \partial (T,\tau)
= p \sum (-1)^i d_i(T, \tau)
= \sum (-1)^i p(d_iT, d_i\tau)
= \sum (-1)^i d_iT
= \partial T = \partial p (T,\tau).
\end{equation}
\end{proof}

\begin{cor}
The induced homomorphism $[p]:H_\bullet(\sS{X},R) \to H_\bullet(\sS{A},R)$ is
well-defined.
\end{cor}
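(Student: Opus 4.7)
The plan is essentially to invoke the standard machinery: any chain map between chain complexes induces a well-defined homomorphism on homology, and the preceding lemma already gives us precisely such a chain map. So the work is bookkeeping, not novelty.

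First, I would observe that $p$ is $R$-linear by construction (it was defined on generators $(T,\tau)$ by $p(T,\tau)=T$ and extended by linearity) and is graded, sending $C_k(\sS{X},R)$ to $C_k(\sS{A},R)$ for each $k \geq -1$. Combined with the preceding lemma $\partial\circ p = p\circ\partial$, this says $p$ is a morphism of chain complexes.

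Next I would verify the two standard consequences. If $(Y,\psi) \in C_k(\sS{X},R)$ is a cycle, i.e.\ $\partial(Y,\psi) = 0$, then $\partial p(Y,\psi) = p\partial(Y,\psi) = p(0) = 0$, so $p$ carries $\ker\partial$ into $\ker\partial$. If $(Y,\psi) = \partial(Z,\zeta)$ is a boundary, then $p(Y,\psi) = p\partial(Z,\zeta) = \partial p(Z,\zeta)$ is a boundary in $C_{k-1}(\sS{A},R)$, so $p$ carries $\operatorname{im}\partial$ into $\operatorname{im}\partial$. Hence $p$ descends to a well-defined $R$-module homomorphism on the quotients $H_k(\sS{X},R)=\ker\partial/\operatorname{im}\partial \to H_k(\sS{A},R)$, which we call $[p]$.

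There is no real obstacle here; the substantive content was already established in the preceding lemma's proof (via Fubini--Tonelli and the symmetry of iterated reductions from Corollary~\ref{cor:include}). The only mild subtlety worth flagging is that $C_\bullet(\sS{X},R)$ is a very large module---because $r(T,\tau)$ and $(T,r\tau)$ are formally distinct---but this does not interfere with the argument, since we never need to quotient by that identification to pass to homology, and $p$ forgets $\tau$ altogether so it is insensitive to the distinction. I would close by remarking that the same reasoning will let us compare cochain complexes after dualization, which matters for the obstruction cocycle machinery developed later.
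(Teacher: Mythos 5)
Your proposal is correct and matches the paper's intent: the paper states this corollary without proof as an immediate consequence of the preceding lemma $\partial\circ p = p\circ\partial$, and your argument simply spells out the standard fact that a graded $R$-linear chain map sends cycles to cycles and boundaries to boundaries, hence descends to homology. Nothing further is needed.
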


Similarly, the chain modules and homology are well-defined for any data
subcomplex $p:\sS{X}' \to \sS{A}'$ of an ambient $p:\sS{X}\to \sS{A}$.
\begin{cor}
If $p:\sS{X}'\to\sS{A}'$ is well-aligned, then there are canonical isomorphisms $C_\bullet(\sS{X}',R) \cong C_\bullet(\sS{A}',R)$ and $H_\bullet(\sS{X}',R) \cong H_\bullet(\sS{A}',R)$ induced by $p$.
\end{cor}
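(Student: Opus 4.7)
The plan is to reduce everything to the preceding lemma, which states that well-alignedness of $p:\sS{X}'\to\sS{A}'$ (in the nontrivial case) is equivalent to $p$ being a simplicial bijection, with inverse the unique data section $\sigma:\sS{A}'\to\sS{X}'$ satisfying $\sigma(\sS{A}')=\sS{X}'$. Once this is in hand, the corollary is essentially formal: $p$ furnishes a bijection of generating sets on each graded piece, extending $R$-linearly to an isomorphism of chain modules, and the two previous results ($\partial^2=0$ on $\sS{X}'$ and $\partial\circ p = p\circ\partial$) upgrade this to an isomorphism of chain complexes, hence of homology.

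In detail, I would proceed in three steps. First, for each $n\geq -1$, invoke the lemma to obtain mutually inverse bijections $p:\sS{X}'_n \to \sS{A}'_n$ and $\sigma:\sS{A}'_n\to\sS{X}'_n$; extend by $R$-linearity to obtain $R$-module homomorphisms
\begin{equation*}
p_\#:C_n(\sS{X}',R)\longrightarrow C_n(\sS{A}',R),\qquad
\sigma_\#:C_n(\sS{A}',R)\longrightarrow C_n(\sS{X}',R),
\end{equation*}
which are mutually inverse because they are $R$-linear extensions of mutually inverse bijections of bases. Second, verify that $p_\#$ is a chain map: this is immediate from the lemma showing $\partial\circ p = p\circ\partial$ on generators of $C_\bullet(\sS{X}',R)$, extended by $R$-linearity. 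A short symmetric argument, using that $\sigma$ commutes with face maps (part of its being a natural data section), shows $\sigma_\#$ is also a chain map. Third, since $p_\#$ is an isomorphism of chain complexes (with inverse $\sigma_\#$), the induced maps on homology are mutually inverse isomorphisms $[p]:H_\bullet(\sS{X}',R)\cong H_\bullet(\sS{A}',R)$.

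The only subtlety worth remarking on is the handling of the trivial case, where $\sS{X}'$ contains no nontrivial data table, or of the $(-1)$-graded module. In the nontrivial case, well-alignedness forces a common total mass $M$ across all tables, which pins down the value in $\Msr([\,])\cong \mathbb{R}_{\geq 0}$, so the bijection extends naturally to the augmented level as well; if only the trivial table $M=0$ is present, both chain complexes degenerate to their $\sS{A}'$-counterparts tensored with $R$, and the statement holds tautologically. Canonicity in both statements follows because the bijection $p$ is canonical (it is the projection) and $\sigma$ is uniquely determined by $p$ under the well-aligned hypothesis.

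I expect no real obstacle here: the hard work was done in the lemma that rigidifies a well-aligned subcomplex into the graph of a section. Once that is invoked, the corollary is a mechanical consequence of the fact that $p$ is a simplicial isomorphism of the underlying simplicial sets, so the $R$-linearizations of both the simplicial identities and the boundary formula transport across verbatim.
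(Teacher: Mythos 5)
Your proposal is correct and follows exactly the route the paper intends: the paper states this corollary without proof, treating it as immediate from the earlier lemma identifying well-aligned subcomplexes with graphs of data sections (so $p$ is a levelwise bijection of generating sets) together with $\partial\circ p = p\circ\partial$. Your extra care with the $(-1)$-graded level and the common mass $M$ is a reasonable addition that the paper leaves implicit.
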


We are particularly interested in the case $R = \Zb$, so that 
a chain $C_\bullet(\sS{A},\Zb)$ (respectively $C_\bullet(\sS{X},\Zb)$) is interpreted as a set of
attribute lists (respectively, data tables), without any consideration for multiplicity or orientation.  It is therefore sensible to apply the condition \emph{well-aligned} to a chain $(Y,\psi) \in C_n(\sS{X}',\Zb)$, so that a well-aligned chain in $(Y,\psi) \in C_n(\sS{X},\Zb)$ can be interpreted equivalently to a section $\sigma:p(\sS{X}') \to \sS{X}'$, where $\sS{X}'$ is the data subcomplex generated by the elements of $\psi$.

\section{Homotopy as Joins}
\label{sec:homotopy}
In the previous sections, we established that a data complex is
equipped with simplicial homology, and framed data complexes as simplicial sets.
This section contains several payoffs for that effort.
First, Section \ref{sec:DJKC} builds to Theorem~\ref{thm:ft}, our first key result, which shows that the simplicial set language enables a connection between our framework 
and standard database engineering; later results show that the framework enables further insights into data merging problems that transcend standard database engineering.
Then, Section \ref{sec:SHDC} explores the simplicial homotopy of data complexes
and reframes Problems \ref{GPtest2} and \ref{GPmerge2} in the language of
obstruction theory for simplicial sets, as in \cite{Kan1958, May1967, Quillen1967,
Seriesa, Friedman2008}.

\subsection{Database Joins and the Kan Condition}

\label{sec:DJKC}

Recall these three standard definitions from the well-established theory of
simplicial sets, as in Appendix~\ref{sec:sset} and \cite{Kan1958, May1967, Quillen1967, Seriesa, Friedman2008}.

\begin{defn}[Simplex]
The standard $n$-simplex $\Delta^n$ is the simplicial set generated (via face and
degeneracy maps) by the ordered set $\mathbf{n}=\{0,\ldots,n\}$ in the simplex
category $\mathbf{\Delta}$.
\end{defn}

\begin{defn}[Horn]
The $k$th \emph{horn} $\Lambda^n_k$ of the $n$-simplex $\Delta^n$ is the
simplicial subset generated by the union of all the faces of $\Delta^n$ except the $k$th face.
\end{defn}
As is standard in the literature, we abuse notation slightly by
referring to both $\Delta^n \to X$ (which is an infinite collection of sets)
and $\mathbf{n} \mapsto x \in X_n$ (which is the generator of that collection)
as ``an $n$-simplex in $X$.''

Note!  
The (categorical) $n$-simplex $\Delta^n$ is \emph{not} the same as the
(topological) $n$-simplex $|\Delta^n|$.  The former is an infinite set of formal
objects in the simplex category; it has no notion of ``interior'' or
``continuity.''  The latter is a compact topological space obtained defined via
convex linear combinations in $\mathbb{R}^{n+1}$.
There is a relationship between their respective categories called
\emph{realization}, as discussed in \cite[\S3]{Quillen1967} and
\cite[Chap~I.2]{Seriesa}.

\begin{rmk}[Data Tables as Simplices]
A data subcomplex $\sS{X}'$ in an ambient $\sS{X}$ is an (augmented) simplicial set
by Theorem~\ref{thm:sset}.  Thus, a data table $(T,\tau)$ with $T =
[a_0,\ldots,a_n]$ 
can be seen as (the generator of) an $n$-simplex, which includes its faces
$d_0(T,\tau),\ldots,d_n(T,\tau)$ and degeneracies
$s_0(T,\tau),\dots,s_n(T,\tau)$, and so-on.
The $n{+}1$ ``vertices'' are (generated by) the single-attribute data tables $(T_0, \tau_0),\ldots,
(T_n,\tau_n)$ obtained by applying sequences of $n$ face maps. 
For $m\leq n$, the $m$-simplices within $(T,\tau)$ are (generated by) the
data tables obtained by applying sequences of face maps and
degeneracy maps until the result has $m{+}1$ attributes.
The picture of ``two simplices that share a boundary component'' is realized in
$\sS{X}'$ as a pair of data tables $(T_{01},\tau_{01})$ and $(T_{02},\tau_{02})$  
and attribute inclusions $\iota_{01}:T_0 \to T_{01}$ and $\iota_{02}:T_0 \to
T_{02}$ such that there is a data table $(T_0, \tau_0)$ with
$\red_{\iota_{01}}\tau_{01} = \tau_0 = \red_{\iota_{02}}\tau_{02}$.
If $\len(T_{01})=\len(T_{02})=2$ and $\len(T_0)=1$, then this information generates a $2$-horn.
A completion of the $2$-horn to a $2$-simplex would be (generated by) a data table
$(T_{012},\tau_{012})$ that has $(T_{01},\tau_{01})$ and $(T_{02},\tau_{02})$
as two of its three faces.    Depending on the available simplices in
$\sS{X}'$, it may or may not be possible to find such $(T_{012},\tau_{012})$.
\label{rmk:datasimp}
\end{rmk}

\begin{defn}[Kan Condition]
A simplicial set $X$ is said to satisfy the \emph{Kan 
condition} iff any map from a horn $\Lambda^n_k \to X$ extends to a
compatible map from the simplex, $\Lambda^n_k \into \Delta^n \to X$.
\end{defn}
The Kan condition means that the simplicial set is closed under
simplicial deformation, so it has a well-defined homotopy group.
The Kan condition is not specific to data complexes;
it is a definition for general simplicial sets, and gives the appropriate
notion of \emph{fibrant} for many model categories.   For our purposes, we require a slight variation on the Kan condition to provide an adequate notion of \emph{fibrant} data contexts, which we now develop as Defn~\ref{def:fibrant}.

We define the space of joins for two data
tables with designated attribute inclusions.
\begin{defn}[Space of Joins]
Suppose attribute lists $T_{0}, T_{01}, T_{02} \in \sS{A}$ are equipped with attribute
inclusions $\iota_{01}:T_{0} \to T_{01}$ and $\iota_{02}:T_{0} \to T_{02}$, and
let $T_{012}$ denote the attribute list $\Merge(T_{01},T_{02},\iota_{01}\sim\iota_{02})$
as in Defn~\ref{defn:merge}, which is equipped with inclusions $\iota'_{01}:T_{01}\into
T_{012}$ and $\iota'_{02}:T_{02}\into T_{012}$.
For any data tables $(T_{01}, \tau_{01}), (T_{02}, \tau_{02}) \in \sS{X}$, let 
\begin{equation}
\JOINS( \tau_{01}, \tau_{02}, \iota_{01}\sim\iota_{02}) \defeq 
\{ (T_{012}, \tau_{012}) \in \sS{X} \,:\, \red_{\iota'_{01}}\tau_{012} = \tau_{01},\
\red_{\iota'_{02}}\tau_{012} = \tau_{02}\}.\end{equation}
\label{def:joinspace}
Note that ``$\JOINS(\tau_{01}, \tau_{02}, \iota_{01}\sim\iota_{02}) \neq \emptyset$'' requires 
$\red_{\iota'_{01}\circ \iota_{01}}\tau_{01} = \red_{\iota'_{01}\circ\iota_{01}}\tau_{02}$, as 
$\red_{T_0} \tau_{012}$ must be well-defined.
\end{defn}
Similarly to Defn~\ref{defn:merge}, we write this set as $\JOINS(\tau_{01},\tau_{02},T_0)$
for notational convenience when the attribute inclusions are understood from
context.

Note!
This is \emph{not} the same notion of ``join'' that one sees in
traditional topology, or in categorical references such as
\cite{Ehlers1999,Rijke2017} and 
\url{https://ncatlab.org/nlab/show/join+of+simplicial+sets}.
It is not yet clear whether there is a useful relationship to joins in ergodic
theory \cite{Glasner2003}.
We choose the term ``join'' to mimic the terminology in database engineering
discussed in Section~\ref{sec:Intro}.  Definition~\ref{def:joinspace} reminds one of couplings from statistics, as in
\cite{Baxter1994}; however, the generality here allows repetition and distinct measures and overlaps.

\begin{defn}[Join Conditions]
A data subcomplex  $p:\sS{X}'\to\sS{A}'$ of an ambient $p:\sS{X}\to\sS{A}$ is said to satisfy the
\emph{weak join condition} iff, for any $(T_{01}, \tau_{01})$ and $(T_{02}, \tau_{02}) \in
\sS{X}'$ with attribute inclusions $T_0 \into T_{01}$ and $T_0
\into T_{02}$ and $\red_{T_0}\tau_{01} = \red_{T_0}\tau_{02}$, then $\JOINS(\tau_{01}, \tau_{02}, T_0) \cap
\sS{X}'$ is nonempty.
It satisfies the \emph{strong join condition} iff $\JOINS(\tau_{01},\tau_{02},T_0) \subset \sS{X}'$.
\label{def:joincond}
\end{defn}
The weak join condition means that the simplicial set admits some database JOIN
operation between any well-aligned pair of data tables. The strong join
condition requires that \emph{all possible} joins exist in $\sS{X}'$.

The trivial join (when $T_0 = [ ]$) is of particular interest as it provides a
generalization of independent products of measures. 
\begin{defn}[Admission of Trivial Joins]
A data subcomplex $p:\sS{X}' \to \sS{A}'$ of an ambient $p:\sS{X}\to\sS{A}$ is said to 
\emph{admit trivial joins} iff, for every $(T_1, \tau_1),
(T_2, \tau_2) \in \sS{X}'$ with $\tot \tau_1 = \tot
\tau_2 = M$, there exists some $\tau_{12} \in T_1 \oplus T_2$ such that
$\red_{T_1} \tau_{12} = \tau_1$ and $\red_{T_2}\tau_{12} = \tau_2$.
\end{defn}
\begin{defn}[Closure under Independent Products]
A data subcomplex $p:\sS{X}' \to \sS{A}'$ of an ambient $p:\sS{X}\to\sS{A}$ is said to be
\emph{closed under independent products} iff, for every $(T_1, \tau_1),
(T_2, \tau_2) \in \sS{X}'$ and $\tot \tau_1 = \tot
\tau_2 = M$, we have $(T_1 \oplus T_2, \frac{\tau_1 \tau_2}{M}) \in \sS{X}'$.
\end{defn}
\begin{rmk}
The independent product is an example of a trivial join.
If a data subcomplex is closed under independent products, then it also includes all IID measures built from its various data tables; this property is important for applications to statistics.
\end{rmk}

\begin{lemma}
If a data subcomplex $p:\sS{X}'\to\sS{A}'$ of an ambient $p:\sS{X}\to\sS{A}$
satisfies the strong join condition, then $p:\sS{X}'\to\sS{A}'$ is closed under permutations.
\label{lem:strongperm}
\end{lemma}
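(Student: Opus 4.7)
The plan is to prove closure under permutation by induction on how many attributes have been placed in the permuted list, adding one new attribute at a time via a join whose base list is trivial. Fix $(T,\tau)\in\sS{X}'$ with $T=[a_0,\ldots,a_n]$ and a permutation $\varsigma$ of $\{0,\ldots,n\}$; set $\tilde{T}\defeq T\circ\varsigma$ and let $\tilde{\tau}\in\Msr(\tilde{T})$ be the permuted measure characterized by $\tilde{\tau}(U_{\varsigma(0)}\times\cdots\times U_{\varsigma(n)})=\tau(U_0\times\cdots\times U_n)$, which is precisely the datum required by the definition of closure under permutation. For $0\leq k\leq n$ let $\tilde{T}_k\defeq[a_{\varsigma(0)},\ldots,a_{\varsigma(k)}]$ and let $\tilde{\tau}_k\in\Msr(\tilde{T}_k)$ be the marginal of $\tilde{\tau}$ onto those first $k{+}1$ factors, so that the desired conclusion is the case $k=n$ of the inductive claim $(\tilde{T}_k,\tilde{\tau}_k)\in\sS{X}'$.

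For the base case $k=0$, the table $(\tilde{T}_0,\tilde{\tau}_0)=([a_{\varsigma(0)}],\text{single-attribute marginal of }\tau)$ is obtained from $(T,\tau)$ by a sequence of face maps that deletes every attribute except $a_{\varsigma(0)}$, so it lies in $\sS{X}'$ because a data subcomplex is closed under face maps (Corollary~\ref{cor:include}). For the inductive step, given $(\tilde{T}_k,\tilde{\tau}_k)\in\sS{X}'$, take $T_{02}\defeq[a_{\varsigma(k+1)}]$ with $\tau_{02}$ the corresponding single-attribute marginal of $\tau$, which again lies in $\sS{X}'$ via face maps. Now apply the strong join condition to $T_{01}\defeq\tilde{T}_k$, this $T_{02}$, and $T_0\defeq[\,]$ with trivial inclusions: the hypothesis $\red_{[\,]}\tilde{\tau}_k=\tot\tau=\red_{[\,]}\tau_{02}$ is automatic because every table in play is a marginal of $\tau$ and hence has total mass $\tot\tau$. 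The merge over an empty base is the ordered concatenation $\tilde{T}_k\oplus[a_{\varsigma(k+1)}]=\tilde{T}_{k+1}$, so the strong join condition yields $\JOINS(\tilde{\tau}_k,\tau_{02},[\,])\subseteq\sS{X}'$.

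The crucial verification is that the \emph{specific} measure $\tilde{\tau}_{k+1}$ lies in this join space. By its definition as a marginal of $\tilde{\tau}$, a direct Fubini--Tonelli computation combined with the defining identity $\tilde{\tau}(U_{\varsigma(0)}\times\cdots\times U_{\varsigma(n)})=\tau(U_0\times\cdots\times U_n)$ shows that further marginalizing $\tilde{\tau}_{k+1}$ over its last factor returns $\tilde{\tau}_k$, while marginalizing over its first $k{+}1$ factors returns $\tau_{02}$. Hence $(\tilde{T}_{k+1},\tilde{\tau}_{k+1})\in\sS{X}'$, and iterating up to $k=n$ gives $(\tilde{T},\tilde{\tau})\in\sS{X}'$. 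The main obstacle I anticipate is merely bookkeeping: tracking the $\varsigma$-indexed factors through the two marginalization identities in the last step. The key conceptual point that makes the argument short is that the strong join condition produces \emph{every} compatible joint measure, so the specific permuted measure $\tilde{\tau}_{k+1}$ is automatically captured; we never need to invoke a merge over a nonempty $T_0$, nor to decompose $\varsigma$ into adjacent transpositions — trivial-base joins of one-attribute marginals suffice.
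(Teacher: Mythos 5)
Your proof is correct, but it takes a genuinely different route from the paper's. The paper first reduces to adjacent transpositions and, for a swap of the adjacent entries $a_i, a_{i+1}$ of $T$, observes that both $\tau$ and the transposed measure $\tilde{\tau}$ lie in the join space of the two codimension-one faces $d_i(T,\tau)$ and $d_{i+1}(T,\tau)$ over their common face $d_{i+1}d_i(T,\tau)$ --- only the order of the arguments to $\Merge$ changes --- so the strong join condition places $\tilde{\tau}$ in $\sS{X}'$. You instead handle an arbitrary permutation $\varsigma$ in one linear induction, assembling $[a_{\varsigma(0)},\ldots,a_{\varsigma(k+1)}]$ as the trivial merge of the already-constructed partial permuted marginal with the single-attribute marginal of $a_{\varsigma(k+1)}$, and noting that the target marginal $\tilde{\tau}_{k+1}$ is one element of $\JOINS(\tilde{\tau}_k,\tau_{02},[\,])$. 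Your route avoids the transposition decomposition entirely and makes the compatibility hypothesis of the join condition automatic (only total masses must agree), at the cost of the Fubini--Tonelli bookkeeping identifying the two marginals of $\tilde{\tau}_{k+1}$; the paper's route keeps each step local to a single swap, uses a nontrivial base $T_0$, and exhibits $\tau$ and $\tilde{\tau}$ as two points of the same join space, which matches the horn/merge picture exploited in Theorem~\ref{thm:ft}. Both arguments hinge on the same essential point, which you identify explicitly: the \emph{strong} join condition puts \emph{every} element of the join space into $\sS{X}'$, so the specific permuted measure is captured automatically, whereas the weak condition would only produce some unspecified join and would not suffice.
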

\begin{proof}
Because $A$ is finite, it suffices to prove that $\sS{X}'$ is closed under
permutations that are swaps (that is, transpositions or 2-cycles). Moreover, it
suffices to consider only swaps of adjacent entries, as any swap $i
\leftrightarrow j$ can be written by migration of $j$ past $i$, then $i$ to the
original position of $j$. 

Suppose that $(T,\tau) \in \sS{X}'$ with $T = [a_0, \ldots, a_i,
a_{j}, \ldots, a_n]$ with $j=i+1$.  Then, consider $(T_{01},\tau_{01}) = d_i(T,\tau)$
and $(T_{02},\tau_{02}) = d_i(T,\tau)$, and $(T_{0},\tau_{0}) =
d_jd_i(T,\tau)$.  By construction, there are well-defined attribute inclusions $ T_0
\into T_{01}$ and $T_{0} \into T_{01}$ that satisfy $\red_{T_0}\tau_{01} =
d_jd_i\tau = d_{j-1}d_i\tau = \red_{T_0}\tau_{02}$.
Note that $T= \Merge(T_{01},T_{02},T_0)$, and $\tau \in
\JOINS(\tau_{01},\tau_{02},\tau)$.
Consider the attribute list $\tilde{T} = [a_0, \ldots, a_j, a_i, \ldots,
a_n]$ obtained by swapping the adjacent entries $a_i$ and $a_j$, and let $\tilde{\tau}$ denote the correspondingly permuted measure obtained from $\tau$.
Note that $\tilde{T} = \Merge(T_{02},T_{01},T_0)$, and 
$\tilde{\tau} \in \JOINS(\tau_{02},\tau_{01},T_0)$.  By the strong join
condition, $\tilde{\tau} \in \sS{X}'$.
\end{proof}

\begin{thm}[Fundamental theorem of data complexes]
For any data subcomplex $p:\sS{X}'\to\sS{A}'$ of an ambient $p:\sS{X} \to \sS{A}$.
\begin{enumerate}
\item If $\sS{X}'$ satisfies the strong join condition, then $\sS{X}'$ admits trivial joins and $\sS{X}'$ satisfies the Kan condition as a simplicial set.
\item If $\sS{X}'$ admits trivial joins and $\sS{X}'$ satisfies the Kan condition, then $\sS{X}'$ satisfies the weak join condition.
\end{enumerate}
\label{thm:ft}
\end{thm}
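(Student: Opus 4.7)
The plan is to handle parts (1) and (2) separately. Part (1) splits into admission of trivial joins (immediate from the definition) and the Kan condition (by induction on horn dimension $n$). Part (2) is an induction on the total codimension $s+t = (\len T_{01} - \len T_0) + (\len T_{02} - \len T_0)$, with base cases handled by the two hypotheses.

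\emph{Part (1), trivial joins.} Given $(T_1,\tau_1), (T_2,\tau_2) \in \sS{X}'$ with $\tot\tau_1 = \tot\tau_2 = M$, the compatibility $\red_{[]}\tau_1 = M = \red_{[]}\tau_2$ holds, so the strong join condition places every element of $\JOINS(\tau_1,\tau_2,[])$ into $\sS{X}'$. This set is nonempty in the ambient complex because the independent product $\tfrac{\tau_1\tau_2}{M}$ is such an element, so $\sS{X}'$ admits trivial joins.

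\emph{Part (1), Kan condition.} Induct on $n$. For $n=1$, a horn is a single vertex, filled by the degeneracy $s_0$ (which $\sS{X}'$ respects). For $n=2$, a horn $\Lambda^2_k$ consists of two data tables $\tau_{01},\tau_{02}$ sharing a common vertex $\tau_0$; the strong join condition places all of $\JOINS(\tau_{01},\tau_{02},T_0)$ in $\sS{X}'$, and any element fills the horn, with Lemma~\ref{lem:strongperm} permuting the merged attribute list to match the desired orientation. For the inductive step, given a horn $\Lambda^n_k$ with faces $\{\tau^{(i)}\}_{i\neq k}$, I would pick two indices $i,j\neq k$ and form the strong join of $\tau^{(i)}, \tau^{(j)}$ over their common $(n-2)$-subface (whose existence is guaranteed by horn compatibility, Lemma~\ref{lem:FT}) to obtain a candidate $n$-simplex $\tau$ with $d_i\tau = \tau^{(i)}$ and $d_j\tau = \tau^{(j)}$. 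Here lies the main obstacle: among the large measure-theoretic family $\JOINS(\tau^{(i)},\tau^{(j)},T_0)$, one must select an element whose remaining faces $d_\ell\tau$ coincide with the prescribed $\tau^{(\ell)}$ for $\ell\neq i,j,k$. The key structural feature is that every face of $\Lambda^n_k$ contains the attribute $a_k$; the plan is to exploit this common attribute by disintegrating each $\tau^{(\ell)}$ over $\Val(a_k)$ and, using the horn compatibility to align the conditionals, iteratively apply strong join against the remaining $\tau^{(\ell)}$ so that each face constraint is installed without violating previously installed ones.

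\emph{Part (2), weak join.} Induct on $s+t$. The trivial base cases are when $s=0$ or $t=0$ (where $\tau_{02}$ or $\tau_{01}$ itself already lies in $\JOINS$); the case $T_0 = []$ with any $s,t$ coincides with the trivial join case, available by hypothesis. If $s=t=1$ and $T_0$ is nonempty, then $\tau_{01}, \tau_{02}$ together form a $\Lambda^2_k$ horn sharing the vertex $\tau_0$, and the Kan condition produces a 2-simplex $\tau_{012}\in\sS{X}'$ which by construction lies in $\JOINS(\tau_{01},\tau_{02},T_0)\cap\sS{X}'$. For the inductive step with $s\geq 2$ (WLOG), peel off one entry of $T_{01}$: choose $T_0\into T'_{01}\subsetneq T_{01}$ with $\len T'_{01} = \len T_{01}-1$, let $\tau'_{01} = \red_{T'_{01}}\tau_{01}\in\sS{X}'$, and apply the inductive hypothesis (with codimension sum $(s-1)+t$) to obtain $\tau'_{012}\in\JOINS(\tau'_{01},\tau_{02},T_0)\cap\sS{X}'$. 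Now $\tau'_{012}$ and $\tau_{01}$ share $\tau'_{01}$ with codimensions $t$ and $1$, so a second application of the inductive hypothesis (codimension sum $t+1 < s+t$) produces $\tau_{012}\in\sS{X}'$. Finally, using $d_i d_j = d_{j-1}d_i$ to commute marginalizations, one checks $\red_{T_{02}}\tau_{012} = \red_{T_{02}}\red_{T'_{012}}\tau_{012} = \red_{T_{02}}\tau'_{012} = \tau_{02}$, so $\tau_{012}$ is the desired weak join. The hard step is again part (1)'s higher-dimensional Kan argument; once that is in hand, part (2) is a relatively mechanical reduction.
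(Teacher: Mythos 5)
Your overall architecture is reasonable, and the trivial-joins half of (1) matches the paper's argument, but there are two genuine gaps, one in each part.

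For part (1), the inductive step of the Kan condition is the technical heart of the theorem, and you have named the difficulty rather than resolved it. Selecting an element of $\JOINS(\tau^{(i)},\tau^{(j)},T_0)$ whose \emph{other} faces $d_\ell\tau$ agree with the prescribed $\tau^{(\ell)}$ is not something the strong join condition gives you, and ``disintegrate over $\Val(a_k)$ and iteratively install each face constraint without violating previously installed ones'' is a statement of the goal, not a proof: the whole question is why installing a new face constraint preserves the old ones, and why the correction can be done by a genuine (nonnegative) measure. The paper's proof spends most of its length on exactly this. It first builds a measure $\tau^{(n-1)}$ with two correct faces by assembling trivial joins over a dyadic decomposition of $\Val(T_{0\cdots(n-2)})$, and then corrects the remaining faces one at a time via $\tau^{(m-1)}=\tau^{(m)}-\rho_{m-1}\cdot\varepsilon_{m-1}$, where $\varepsilon_{m-1}=d_{m-1}\tau^{(m)}-\tau_{\widehat{m-1}}$ is a signed error measure. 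Two nontrivial facts make this work: the identity $d_k\varepsilon_{m-1}=0$ for $k\geq m$ (which is what guarantees previously installed faces are undisturbed), and the existence of a probability measure $\rho_{m-1}$ dominated by the infimum of the ratios $\tau^{(m)}/\varepsilon_{m-1}$ (which is what keeps $\tau^{(m-1)}$ nonnegative). Nothing in your sketch substitutes for either fact, so the inductive step is open as written.

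For part (2), your base case $s=t=1$ with $T_0$ nonempty is flawed when $\len(T_0)\geq 2$. Two data tables $\tau_{01},\tau_{02}$ sharing the common face $\tau_0$ form a horn $\Lambda^2_k$ only when $\len(T_0)=1$, i.e., when $\tau_0$ really is a vertex. If, say, $T_0=[a,b]$, $T_{01}=[a,b,c]$, $T_{02}=[a,b,d]$, then the target simplex is a $\Delta^3$ and any horn $\Lambda^3_k$ of it requires \emph{three} mutually compatible $2$-faces; you only have two, so the Kan condition does not apply directly, and your induction bottoms out on a case it cannot handle. The paper sidesteps this by inducting not on codimension but on the dimension of faces containing a single chosen vertex $k$ of $T_0$: the $1$-simplices $[i,k]$ and $[k,j]$ exist a priori as faces of $\tau_{01}$ and $\tau_{02}$, each pair gives a genuine $\Lambda^2_k$, and thereafter any $n$ of the already-constructed $n$-faces through $k$ assemble into a genuine $\Lambda^{n+1}_k$. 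If you want to keep your codimension induction, you must either restrict the peeling so that the terminal configuration always has $\len(T_0)=1$ (which your reduction does not arrange) or supply the missing intermediate faces needed to complete a horn---which is essentially the paper's vertex-based argument.
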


(We would not be surprised if the Kan condition and the strong join condition
are equivalent, under some reasonable assumptions, but we have not pursued that
claim.)

\begin{proof}[Proof of (2).]
Suppose that $\sS{X}'$ satisfies the Kan condition and admits
trivial joins.  Admission of trivial joins provides the
weak join condition in the case $T_0 = [ ]$.  Suppose that $(T_{01},\tau_{01})$ and
$(T_{02},\tau_{02})$ are elements of $\sS{X}'$.  Suppose that there are inclusions
$\iota_{01}:T_0 \to T_{01}$ and $\iota_{02}:T_0 \to T_{02}$ for some $T_0$, and 
suppose that $\red_{T_0} \tau_{01} = \red_{T_0} \tau_{02}$.
Each of $(T_{01},\tau_{01})$ and $(T_{02},\tau_{02})$ and $(T_{0}, \tau_{0})$ provides 
all faces of all lower dimensions.   We prove the existence of 
$(T_{012}, \tau_{012}) \in \JOINS(\tau_{01}, \tau_{02}, T_0)$ by induction on the
dimension.
For simplicity, we use the language of
simplicial sets, instead of the language of measures. Recall that a ``vertex''
is a data table obtained by marginalizing to a single attribute, and an
$n$-simplex is a data table obtained by marginalizing to $n{+}1$ attributes,
as in Remark~\ref{rmk:datasimp}.
Fix a preferred vertex
$k$ in $(T_0, \tau_0)$. For any vertex $i$ in $(T_{01}, \tau_{01})$ and any vertex
$j$ in $(T_{02}, \tau_{02})$, the 1-simplex $[i, k]$ and $[k,j]$ exist \emph{a priori}
(up to notational ordering). This is an example of a horn $\Lambda^2_k$.
Therefore, by the Kan condition, the 2-simplex $[i,k,j]$ exists in $\sS{X}'$.  Hence, every
2-face including $k$ and vertices in $(T_{01},\tau_{01})$ or
$(T_{02},\tau_{02})$ exists in $\sS{X}'$.
Assume for induction that every $n$-face containing vertex $k$
exists.  Any $n$ of those $n$-faces form a horn $\Lambda^{n+1}_k$, so their
$(n{+}1)$-face exists in $\sS{X}'$.  So, every $(n{+}1)$-face containing vertex
$k$ exists in $\sS{X}'$.  Therefore, there is a Data Table $(T_{012},\tau_{012})$ that
involves all vertices in $(T_{01},\tau_{01})$ and $(T_{02},\tau_{02})$.
\end{proof}

\begin{proof}[Proof of (1).]
We prove part (1) under the notable assumption that $\Val(a)$ is a compact
metric space for all $a \in A$.   Hence, for any attribute
list $T$, the space of measures $\Msr(T)$ includes a uniform\footnote{That is,
$\kappa_T(B_r(x))$ depends only on $r$, for metric balls $B_r(x)$ of
sufficiently small radius.} probability measure $\kappa_T$. 

Suppose that $\sS{X}'$ satisfies the strong join condition.  The case
$T_0 = []$ implies admission of trivial joins.

In this proof,  we assume that $k=0$ is the common vertex in a horn
$\Lambda^n_k$, but that
is only for notational simplicity; the proof certainly applies for any other
specified vertex $k$, by appropriate re-ordering.
Consider data tables giving a horn $\Lambda^n_0$.
These data tables are of the form $(T_{\widehat{m}}, \tau_{\widehat{m}})$ for
$1 \leq m \leq n$, where $T_{\widehat{m}}= [a_0,\ldots, a_{m-1}, a_{m+1}, \ldots,
a_n]$.  Let $T=[a_0,\ldots,a_n]$.  As a horn, these data tables are
well-aligned; that is, they match on all corresponding faces according to $d_i
\tau_{\widehat{j}}=d_{j-1}\tau_{\widehat{i}}$ for $i<j$ as noted after
Definition~\ref{def:horn}.
In particular, all these data tables share the same total mass, $M$.
To establish the Kan condition, we construct a compatible $n$-simplex; that is, 
a data table $(T,\tau)$ such that $d_m(T,\tau) = (T_{\widehat{m}},\tau_{\widehat{m}})$ for $1 \leq m\leq n$.

A brief outline of the argument:
First, we construct a data table $(T,\tau^{(n-1)})$ such that
$d_{n-1}\tau^{(n-1)} = \tau_{\widehat{n-1}}$ and $d_{n}\tau^{(n-1)} =
\tau_{\widehat{n}}$.  The measure $\tau^{(n-1)}$ is built from a parameterized
family of partial measures on
$\Val(T_{n-1}\oplus T_{n})$ by recursively bifurcating the parameter
set $\Val(T_{0\cdots(n-2)})$ into dyadic sets, which allows $\tau^{(n-1)}$ to be defined via
countable disjoint unions.  This data table $\tau^{(n-1)}$ serves as the base
case for an inductive argument for a sequence of partial solutions
$\tau^{(n-1)}, \ldots, \tau^{(m)}, \ldots, \tau^{(1)}$ such that $\tau^{(m)}$ has
the desired faces $d_{m}$ through $d_{n}$.  Finally,
$(T,\tau^{(1)})$ is the desired $n$-simplex.  Let us proceed.

For $K=\Val(T_{0\cdots(n-2)})$, a compact set, consider the measures
\begin{equation} 
\begin{cases}
\mu_{K,n-1}:&U_{n{-}1} \mapsto \tau_{\widehat{n}}(K \ptimes U_{n{-}1})\ \text{in}\
\Msr(T_{n-1}),\\
\mu_{K,n}:&U_{n} \mapsto \tau_{\widehat{n-1}}(K \ptimes U_{n})\ \text{in}\
\Msr(T_{n}).  
\end{cases}
\label{eqn:startK}
\end{equation}
Consider any trivial join $t_K \in \JOINS(\mu_{K,n-1}, \mu_{K,n}, [ ]) \subset
\Msr(T_{n-1} \oplus T_{n}) = \Msr(\Merge(T_{n-1}, T_{n}, []))$; that is,
$\red_{T_{n-1}} t_K(U_{n-1}) = \tau_{\widehat{n-1}}(K \ptimes U_{n-1})$ and 
$\red_{T_{n}} t_K(U_{n}) = \tau_{\widehat{n-1}}(K \ptimes U_{n})$.
Note that
\begin{equation}
\tot t_K 
= d_n \tau_{\widehat{n-1}}(K) 
= d_n \tau_{\widehat{n}}(K)
= \tau_{0\cdots(n-2)}(K)=M.
\end{equation}
Fix any open $W \subset K$ such\footnote{Of course the value of $\frac12$ is
not special, but aesthetic.  Any $0< \kappa(W) < \kappa(K)$ will do.} that $\kappa_{T_{0\cdots(n-2)}}(W) =
\frac12\kappa_{T_{0\cdots(n-2)}}(K)$.  Note that the measures 
\begin{equation} 
\begin{cases}
\mu_{W,n-1}:&U_{n{-}1} \mapsto \tau_{\widehat{n}}(W \ptimes U_{n{-}1})\ \text{in}\
\Msr(T_{n-1}),\\
\mu_{W,n}:&U_{n} \mapsto \tau_{\widehat{n-1}}(W \ptimes U_{n})\ \text{in}\
\Msr(T_{n})
\end{cases}
\label{eqn:startW}
\end{equation}
can be joined to provide  
$t_W \in \JOINS(\mu_{W,n-1}, \mu_{W,n}, [ ]) \subset
\Msr(T_{n-1} \oplus T_{n}) = \Msr(\Merge(T_{n-1}, T_{n}, []))$. That is,
$\red_{T_{n-1}} t_W(U_{n-1}) = \tau_{\widehat{n-1}}(W \ptimes U_{n-1})$ and 
$\red_{T_{n}} t_W(U_{n}) = \tau_{\widehat{n-1}}(W \ptimes U_{n})$.
Note that \begin{equation}\tot t_W = d_{n-1} \tau_{\widehat{n-1}}(W) = d_{n-1} \tau_{\widehat{n}}(W)
=\tau_{0\cdots(n-2)}(\Val(T_{0\cdots(n-2)})).
\end{equation}
Further, by their definitions via trivial joins from $W \subset K$, one can
choose $t_W$ to guarantee that  
$t_W(U_{n-1}\times U_{n}) \leq t_K(U_{n-1}\times U_{n})$
for all Borel sets $U_{n-1}\times U_{n} \subset \Val(T_{n-1}\oplus T_{n})$.
In particular, $\tot t_W \leq \tot t_K$.
Likewise, for the closed set $K-W$, define $t_{K-W} \defeq t_{K} - t_{W}$,
which is also a measure in $\Msr(T_{n-1}\oplus T_{n})$ by construction.
Note that both the closure $\bar{W}$ and the complement $K-W$ are closed in
$K$, therefore both are compact.  Replacing $K$ with $\bar{W}$ or $K-W$ in
\eqref{eqn:startK} means that we can establish measures $\{
t_{W_\lambda}\}_{\lambda \in \Lambda}$ for a countable bifurcating collection
$\{W_\lambda\}_{\lambda\in \Lambda}$ of open sets; any
measurable set in $\Val(T_{0\cdots(n-2)})$ can be $\kappa$-almost covered
by disjoint sets the collection.  By analogy, we refer to the
$\{W_\lambda\}_{\lambda\in\Lambda}$ as a dyadic collection.

Given a such a countable collection $\{t_{W_\lambda} \}_{\lambda \in \Lambda}
\subset \Msr(T_{n-1}\oplus T_{n})$, 
define a measure
$\tau^{(n-1)} \in \Msr(T)$ on Borel sets $U_0 \ptimes \cdots \ptimes U_n$
by disjoint $\sigma$-additivity,
\begin{equation}
\tau^{(n-1)}(U_0 \ptimes \cdots \ptimes U_n) \defeq
\sum_{\substack{\text{disjoint}\\W_\lambda \subset
U_0\ptimes \cdots \ptimes U_{n-2}}} t_{W_\lambda}(U_{n-1}\times U_{n}).
\end{equation}
By construction,
$d_{n}\tau^{(n-1)} = \tau_{\widehat{n}} \in \sS{X}'$ and 
$d_{n-1}\tau^{(n-1)} = \tau_{\widehat{n-1}} \in \sS{X}'$.  Therefore, by the
strong join condition, $(T,\tau^{(n-1)}) \in \sS{X}'$.
The data table $(T,\tau^{(n-1)})$ provides the base case for induction on faces.

Assume for induction that for some $m$ satisfying $1 < m \leq n-1$ there exists
a data table  $(T,\tau^{(m)}) \in \sS{X}'$ such that 
$d_{k}\tau^{(m)} = \red_{T_{\widehat{k}}}\tau^{(m)}  = \tau_{\hat{k}} \in
\sS{X}'$ for all $m \leq k \leq n$.
Denote the ``error'' of the $d_{m-1}$ face as 
\begin{equation}
\varepsilon_{m-1} \defeq \left( d_{m-1}\tau^{(m)} -
\tau_{\widehat{m-1}}\right).
\end{equation}
The error $\varepsilon_{m-1}$ is a signed measure---not a measure---on
$\Val(T_{\widehat{m-1}})$, but the face operation of marginalization is still sensible.
Then for $m \leq k \leq n-1$,
\begin{equation}
d_k \varepsilon_{m-1} 
=
d_k(d_{m-1}\tau^{(m)} - \tau_{\widehat{m-1}})
=
d_{m-1}d_{k+1}\tau^{(m)} - d_k \tau_{\widehat{m-1}}
=
d_{m-1}\tau_{\widehat{k+1}} - d_{m-1} \tau_{\widehat{k+1}}=0.
\label{eqn:eps0}
\end{equation}

Also, for application below, consider the pre-measure $f$ on Borel sets
$U_{m-1} \subset \Val(T_{m-1})$ as defined by
\begin{equation}
f(U_{m-1}) = \inf\left\{ 
\frac{\tau^{(m)}(W \ptimes U_{m-1} \ptimes Z)}{\varepsilon_{m-1}(W
\ptimes Z)}\   
\text{for Borel $W \ptimes Z \subset \Val(T_{\widehat{n-1}})$ st
$\varepsilon_{m-1}(W\ptimes Z)>0$}\right\}.
\label{fprem}
\end{equation}
Observe the inequality 
\begin{equation}
f(\Val(T_{m-1})) \geq 1,
\label{INEQ0}
\end{equation}
which follows because
for all Borel sets $W \ptimes Z \subset \Val(T_{\widehat{n-1}})$ satisfying 
$\varepsilon_{m-1}(W\ptimes Z)>0$, we have
\begin{equation}
\begin{split}
\frac{\tau^{(m)}(W \ptimes \Val(T_{m-1}) \ptimes Z)}{\varepsilon_{m-1}(W
\ptimes Z)} = 
\frac{\tau_{\widehat{m-1}}(W \ptimes Z) + \varepsilon_{m-1}(W \ptimes Z)}{\varepsilon_{m-1}(W
\ptimes Z)}= 
1+ \frac{\tau_{\widehat{m-1}}}{\varepsilon_{m-1}}(W \ptimes Z) 
\geq 1.\\
\end{split}
\label{INEQ1}
\end{equation}
Let $\rho_{m-1} \in \Msr(T_{m-1})$ be a probability measure satisfying the
condition
\begin{equation}
\rho_{m-1}(U_{m-1})
\leq
f(U_{m-1}) 
\label{INEQ2}
\end{equation}
for all Borel $U_{m-1}\subseteq \Val(T_{m-1})$.
Such probability measures are guaranteed to exist by \eqref{INEQ0}.

Then, define
for any\footnote{Not necessarily meeting the $\varepsilon_{m-1}>0$ condition above.} 
Borel $W \ptimes U_{m-1} \ptimes Z \subseteq \Val(T)$,
\begin{equation}
\tau^{(m-1)}(W \ptimes U_{m-1} \ptimes Z) \defeq 
\tau^{(m)}(W \ptimes U_{m-1} \ptimes Z) 
- \rho_{m-1}(U_{m-1}) \cdot \varepsilon_{m-1}(W \ptimes Z),
\label{eqn:deftm1}
\end{equation}
and extend by additivity.
By construction, $\tau^{(m-1)}$ is additive and zero-null. Non-negativity
follows from \eqref{INEQ2} and the definition of $f$ in \eqref{fprem}; therefore, $\tau^{(m-1)}$ is a measure on
$\Val(T)$.
Moreover, $\tau^{(m-1)}$ satisfies the desired marginalizations, shown here:
\begin{equation}
\begin{split}
d_{m-1} \tau^{(m-1)}(W \ptimes Z)
&= \tau^{(m)}(W \ptimes Z) - \rho_{m-1}(\Val(T_{m-1})) \cdot \varepsilon_{m-1}(W \ptimes Z)\\
&= \tau^{(m)}(W \ptimes Z) - 1 \cdot \varepsilon_{m-1}(W \ptimes Z)\\
&=  \tau_{\widehat{m-1}}(W \ptimes Z).
\end{split}
\end{equation}
And, for $m \leq k \leq n$, the properties \eqref{eqn:eps0} apply to give
\begin{equation}
d_{k} \tau^{(m-1)} 
=
d_{k}\left(\tau^{(m)} 
- \rho_{m-1} \cdot \varepsilon_{m-1}\right)
=
d_{k}\tau^{(m)} 
- \rho_{m-1}\cdot0
=
d_{k}\tau^{(m)}
=
\tau_{\widehat{k}}.
\end{equation}
Therefore, $(T,\tau^{(m-1)})$ is a data table that has the desired faces
$d_{m-1}$ through $d_{n}$.  The inductive step is established.
The ultimate data table $(T,\tau^{(1)})$ provides the $n$-simplex $\Delta^n$
completing $\Lambda^n_0$.
\end{proof}

\begin{rmk}[Freedom]
The flexibility in choosing $(T,\tau)$ arises from an initial parametric choice
of joined measures $\{t_x\}_{x \in \Val(T_{0\cdots(n-2)})} \subset
\Msr(T_{n-1}\oplus T_n)$ and a finite set of probability measures $\rho_{n-2}$,
$\rho_{n-3}$, \ldots, $\rho_{1}$.
\end{rmk}

\subsection{Simplicial Homotopy of Data Complexes}
\label{sec:SHDC}
\begin{defn}[Fibrant]
A data complex $p:\sS{X}' \to \sS{A}'$ satisfying the strong join condition is called fibrant.
\label{def:fibrant}
\end{defn}
See Appendix~\ref{sec:sset} for a categorical version of this definition.
The entire \emph{raison d'\^etre} of fibrant objects is that they admit
homotopy, as proven by \cite{Kan1958} and \cite{Quillen1967}, which allows
obstruction theory to be studied in direct analogy to Steenrod.
In the category of simplicial sets, the term \emph{fibrant} refers only to the
Kan extension condition.  Our practical desire to use joins as a
weak-equivalence compels us to require the strong join condition.  By
Theorem~\ref{thm:ft}(1) the traditional definition and all of its consequences
are implied.
\begin{cor}
Suppose a data subcomplex $\sS{X}'$ of an ambient $\sS{X}$ is fibrant, and fix a
basepoint data table $(T_0,\tau_0)$.  The homotopy group
$\pi_n(\sS{X}', \tau_0)$ is well-defined for all $n$, and satisfies the typical
properties of homotopy categories over model categories.
\end{cor}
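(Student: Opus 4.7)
The plan is to reduce this corollary to the standard theory of simplicial homotopy for Kan complexes, which has already been engineered by Kan, Quillen, May, and others. The first step is to unpack the hypothesis: by Definition \ref{def:fibrant}, $\sS{X}'$ being fibrant means it satisfies the strong join condition. Applying Theorem \ref{thm:ft}(1) then yields that $\sS{X}'$ satisfies the Kan extension condition as a simplicial set, so $\sS{X}'$ is a Kan complex in the classical sense, with $(T_0,\tau_0)$ a distinguished $0$-simplex after iterating face maps down to a single attribute if needed (or, equivalently, viewed through degeneracies as the constant simplex $s_0^n \tau_0$ in any dimension).

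With the Kan condition in hand, the homotopy group $\pi_n(\sS{X}',\tau_0)$ is constructed in the standard simplicial way: an element is represented by an $n$-simplex $x \in \sS{X}'_n$ whose boundary factors through the basepoint, i.e., $d_i x = s_0^{n-1} \tau_0$ for all $i$; two such $n$-simplices $x, y$ are declared equivalent iff there is an $(n{+}1)$-simplex $w$ with $d_0 w = x$, $d_1 w = y$, and $d_i w = s_0^n \tau_0$ for $i \geq 2$. The plan is to transport verbatim the verifications from \cite{May1967} and \cite{Seriesa} that (a) this relation is an equivalence (transitivity requires horn-filling for $\Lambda^{n+1}_1$); (b) the obvious ``gluing'' along boundary faces descends to a well-defined binary operation; (c) the Kan condition provides inverses; and (d) for $n \geq 2$ the operation is abelian via the Eckmann--Hilton argument. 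Since these constructions refer only to the abstract simplicial-set data guaranteed by Theorem \ref{thm:sset}, nothing measure-theoretic enters beyond ensuring that the face and degeneracy operations from Definitions \ref{defn:FT} and \ref{defn:DT} obey the simplicial identities, which is exactly the content of Theorem \ref{thm:sset}.

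For the second assertion---the ``typical properties of homotopy categories over model categories''---the plan is to invoke Quillen's standard model structure on the category of simplicial sets \cite{Quillen1967}, in which the fibrant objects are precisely the Kan complexes. Since $\sS{X}'$ is a Kan complex by the first step, it is a fibrant object of this ambient model structure, and so the general formalism---functoriality of $\pi_n$, long exact sequences of Kan fibrations, Whitehead's theorem on weak equivalences between fibrant objects, and the identification of $[\Delta^n/\partial\Delta^n, \sS{X}']$ with $\pi_n(\sS{X}', \tau_0)$ in the homotopy category---applies automatically.

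The main obstacle, and the point where the statement is deliberately informal, is the reading of ``over model categories.'' If one requires a model structure intrinsic to the category of data subcomplexes (with marginalization morphisms), then one would need to verify the pushout, pullback, and lifting axioms inside that subcategory, which is not established here. The safer and intended reading, which this plan adopts, is that $\sS{X}'$ sits as a fibrant object within the ambient simplicial-set model category via the forgetful functor that discards measures, and inherits all standard properties through that embedding; the full categorical development for data complexes themselves is deferred to the appendix.
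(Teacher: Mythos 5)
Your proposal is correct and follows the same route the paper intends: the paper offers no separate proof, relying on the remark immediately preceding the corollary that the strong join condition implies the Kan condition via Theorem~\ref{thm:ft}(1), whence the standard simplicial homotopy theory of Kan complexes from \cite{Kan1958} and \cite{Quillen1967} applies. Your added care about the basepoint being a vertex and about which model structure is meant (ambient simplicial sets versus an intrinsic structure on data complexes, the latter only sketched in the paper's appendix) goes beyond what the paper makes explicit but is consistent with it.
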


\begin{thm}
For any attribute set $A$ and value spaces $\Val(~)$, the ambient data complex $p:\sS{X}\to\sS{A}$ is fibrant.
\label{thm:Xfibrant}
\end{thm}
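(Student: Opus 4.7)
The plan is to observe that fibrancy reduces to two things: a tautological containment, and a measure-theoretic existence statement that I would establish by disintegration.

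First, by definition of $\sS{X}$ as the ambient data complex over $A$, every pair $(T, \tau)$ with $T \in \sS{A}$ and $\tau \in \Msr(T)$ belongs to $\sS{X}$. Since $\JOINS(\tau_{01}, \tau_{02}, T_0)$ is defined (Definition~\ref{def:joinspace}) as a subset of $\sS{X}$, the containment $\JOINS(\tau_{01}, \tau_{02}, T_0) \subseteq \sS{X}$ required by the strong join condition (Definition~\ref{def:joincond}) holds automatically. By Definition~\ref{def:fibrant}, this is fibrancy.

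The substantive content worth recording is that $\JOINS$ is actually non-empty whenever the compatibility condition $\red_{T_0}\tau_{01} = \red_{T_0}\tau_{02}$ holds, since otherwise fibrancy of $\sS{X}$ would be vacuous and Theorem~\ref{thm:ft}(1) would have no bite. I would establish non-emptiness via the disintegration theorem. Since each $\Val(a)$ is compact metric, so is each $\Val(T)$, and every finite Borel measure on a compact metric (hence Polish, hence Radon) space admits disintegrations over its marginals. Writing $T_1 = T_{01}/\iota_{01}$, $T_2 = T_{02}/\iota_{02}$, and $\tau_0 = \red_{T_0}\tau_{01}$, I would disintegrate $\tau_{01}$ into a $\tau_0$-almost-everywhere defined family of probability measures $\{\mu^{(1)}_x\}_{x \in \Val(T_0)}$ on $\Val(T_1)$, and similarly $\tau_{02}$ into $\{\mu^{(2)}_x\}_{x \in \Val(T_0)}$ on $\Val(T_2)$. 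Using Lemma~\ref{lem:decomp} to identify $\Val(T_{012}) \cong \Val(T_0) \ptimes \Val(T_1) \ptimes \Val(T_2)$, I would define the candidate join on Borel rectangles by
\[\tau_{012}(U_0 \ptimes U_1 \ptimes U_2) \defeq \int_{U_0} \mu^{(1)}_x(U_1)\, \mu^{(2)}_x(U_2)\, d\tau_0(x),\]
and extend via Carath\'eodory to a Borel measure on $\Val(T_{012})$.

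A short Fubini--Tonelli argument should then verify the reduction identities $\red_{T_{01}}\tau_{012} = \tau_{01}$ and $\red_{T_{02}}\tau_{012} = \tau_{02}$: marginalizing out $\Val(T_2)$ picks up $\mu^{(2)}_x(\Val(T_2)) = 1$, and symmetrically on the other side. The only non-trivial ingredient is the appeal to disintegration, which is routine in the Radon setting but is the place where the compact-metric hypothesis on each $\Val(a)$ earns its keep. The extreme non-uniqueness of the disintegrations dovetails with the \emph{Freedom} remark after Theorem~\ref{thm:ft}: different choices of the conditional kernels $\mu^{(i)}_x$ produce different elements of $\JOINS(\tau_{01}, \tau_{02}, T_0)$, all of which live in $\sS{X}$ by the tautological containment above.
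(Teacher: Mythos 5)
Your proposal is correct, and its load-bearing step is exactly the paper's own (one-line) proof: by Definition~\ref{def:fibrant} fibrancy of a data complex means the strong join condition, and by Definition~\ref{def:joincond} that condition only demands the containment $\JOINS(\tau_{01},\tau_{02},T_0)\subset\sS{X}'$, which is tautological for the ambient $\sS{X}$ since $\JOINS$ is defined in Definition~\ref{def:joinspace} as a subset of $\sS{X}$ in the first place. Everything after your first paragraph is therefore not needed to prove the theorem as stated, but it is not wasted: you are right that the containment alone would be vacuous without non-emptiness of $\JOINS$, and the paper does address this --- just not here. It does so inside the proof of Theorem~\ref{thm:ft}(1), by an explicit recursive construction (trivial joins over a dyadic bifurcation of $\Val(T_{0\cdots(n-2)})$, followed by an inductive correction of the faces using auxiliary probability measures $\rho_{m-1}$). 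Your alternative --- disintegrate $\tau_{01}$ and $\tau_{02}$ over their common marginal $\tau_0$ and form the conditionally independent coupling $\tau_{012}(U_0\ptimes U_1\ptimes U_2)\defeq\int_{U_0}\mu^{(1)}_x(U_1)\,\mu^{(2)}_x(U_2)\,d\tau_0(x)$ on $\Val(T_{012})$ via Lemma~\ref{lem:decomp} --- is a standard and arguably cleaner way to exhibit an element of $\JOINS(\tau_{01},\tau_{02},T_0)$; the compact-metric (hence Radon) hypothesis on each $\Val(a)$ justifies the disintegration, and the marginal identities follow since each $\mu^{(i)}_x$ is a probability measure. What your route buys is a one-step existence proof of joins with the non-uniqueness laid bare (the choice of conditional kernels); what the paper's route buys is a horn-filling construction adapted to completing an entire $\Lambda^n_k$, not just joining two tables, which is what the Kan condition actually requires. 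The two are complementary rather than in conflict, and neither is needed for Theorem~\ref{thm:Xfibrant} itself.
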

\begin{proof}
The very definition of an ambient $\sS{X}$ is that it includes \emph{all} finite measures over the relevant metric spaces, so it includes the set $\JOINS( )$ in particular.
\end{proof}
We now want to explore how a data subcomplex $p:\sS{S}\to\sS{B}$ of an ambient $p:\sS{X} \to \sS{A}$ interacts with 
any other attribute list $T \in \sS{A}$.  The following sets are of interest.
\begin{defn}
Let $\sS{S}|_T \defeq \{(S,\sigma) \in \sS{S} \,:\, \exists\, \iota: S \into T \}$,
the set of data tables in the data subcomplex that are detected by $T$.
Let $\sS{B}|_T \defeq \{ S \in \sS{B} \,:\, \exists\, \iota:S \into T\} = p(\sS{S}|_T)$, 
the set of attribute lists in the subcomplex that are detected by $T$.
\end{defn}

A data subcomplex $p:\sS{S}\to\sS{B}$ may not be fibrant, so we define
a convenient fibrant space that contains it.
The notation $\sS{F}^0$ is meant to be suggestive; in Section \ref{sec:FO}, a larger filtration of simplicial sets
is created (Definition \ref{defn:approx}) by turning the equality in the definition below into an inequality involving Wasserstein distance.
\begin{defn}[Complex of Perfect Joins]
For any data subcomplex $\sS{X}'$ of an ambient $\sS{X}$, let
$\sS{F}^0$ denote the subset of $\sS{X}$ defined by
\begin{quotation}
$(T,\tau) \in \sS{F}^0$ if and only if $\forall\, a \in T$, $\exists\,
(S,\sigma) \in \sS{S}|_T$ such that $a \in S$ and $\red_S \tau =
\sigma$.
\end{quotation}
Note: the quantifier ``$\forall a \in T$'' refers to each entry in the attribute list, which 
means repeated entries must have corresponding measures.
\end{defn}

The definition of $\sS{F}^0$ is a convenient way to say ``consider everything
that can be generated from $\sS{S}$ using $\JOINS( )$,'' as
justified by the following lemma. Similarly, the upcoming Definition
\ref{defn:approx} of $\sS{F}^t$ gives a convenient way of saying ``consider everything
that can be approximated to an acceptable level of uncertainty from $\sS{S}$ using $\JOINS( )$.''

\begin{lemma}
$(T,\tau) \in \sS{F}^0$ if and only if there is a sequence $(T_0, \tau_0), (T_1,
\tau_1), \ldots, (T_k, \tau_k)$ such that 
\begin{itemize}
\item $(T_0, \tau_0) = (S_0, \sigma_0) \in \sS{S}$, and 
\item $\forall i = 1, \ldots, k,\ (T_i, \tau_i) \in \JOINS(\tau_{i-1},\sigma_i,
T_{i-1} \cap S_i)$ for some $(S_i, \sigma_i) \in \sS{S}$, and
\item $(T_k, \tau_k) = (T,\tau)$.
\end{itemize}
\end{lemma}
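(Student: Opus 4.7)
The plan is to prove the biconditional by handling each direction directly, using the marginalization-composition identity $\red_{S}\red_{T_i}\tau = \red_{S}\tau$ whenever $S \into T_i$.

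For the reverse direction ($\Leftarrow$), I would induct on the length $k$ of the sequence. The key observation is that if $(T_i,\tau_i) \in \JOINS(\tau_{i-1},\sigma_i, T_{i-1}\cap S_i)$, then by definition of $\JOINS$ every attribute of $T_i$ lies in the image of either the inclusion $T_{i-1}\into T_i$ or the inclusion $S_i \into T_i$, and moreover $\red_{T_{i-1}}\tau_i = \tau_{i-1}$ and $\red_{S_i}\tau_i = \sigma_i$. Composing inclusions $S_i \into T_i \into T_{i+1} \into \cdots \into T_k$ and applying marginalization sequentially gives $\red_{S_i}\tau_k = \sigma_i$. Thus for each attribute $a \in T = T_k$, tracing back to the smallest index $j$ at which $a$ appears in $T_j$ produces a witness $(S_j,\sigma_j) \in \sS{S}|_T$ with $a \in S_j$ and $\red_{S_j}\tau = \sigma_j$, which is precisely the definition of $\sS{F}^0$.

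For the forward direction ($\Rightarrow$), given $(T,\tau) \in \sS{F}^0$ with $T = [a_0,\ldots,a_n]$, I would select, for each position $i$, a witness $(S_i,\sigma_i) \in \sS{S}|_T$ with $a_i$ in the image of some fixed attribute inclusion $\iota_i:S_i \into T$ and with $\red_{\iota_i}\tau = \sigma_i$. Then I define $T_i$ to be the sublist of $T$ obtained by taking the union of the images $\iota_0 \cup \cdots \cup \iota_i$ in $T$ (this is a well-defined sublist because all inclusions point into the common list $T$), and set $\tau_i \defeq \red_{T_i}\tau$. Put $(T_0,\tau_0) = (S_0,\sigma_0) \in \sS{S}$. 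By construction, $T_i = \Merge(T_{i-1}, S_i, T_{i-1} \cap S_i)$, and the compatibility condition for the join holds automatically since
\begin{equation}
\red_{T_{i-1}\cap S_i}\tau_{i-1} \;=\; \red_{T_{i-1}\cap S_i}\tau \;=\; \red_{T_{i-1}\cap S_i}\sigma_i,
\end{equation}
both obtained by chaining reductions through $\tau$. The same chaining shows $\red_{T_{i-1}}\tau_i = \tau_{i-1}$ and $\red_{S_i}\tau_i = \sigma_i$, so $(T_i,\tau_i) \in \JOINS(\tau_{i-1},\sigma_i, T_{i-1}\cap S_i)$. Since each $a_i \in S_i$ by choice, the union $T_n$ covers every position of $T$, so $T_n = T$ and $\tau_n = \tau$.

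The main obstacle is purely bookkeeping: attribute lists are ordered multisets rather than sets, so ``$T_{i-1} \cap S_i$'' and ``$T_{i-1} \cup S_i$'' must be interpreted positionally inside $T$ via the fixed ambient inclusions $\iota_0,\ldots,\iota_n$, and one must check this matches the $\Merge$ construction of Definition \ref{defn:merge}. Once this positional interpretation is adopted, every equality of measures follows immediately from the composition law for $\red$ applied to the single measure $\tau$; no nontrivial measure-theoretic content beyond Fubini--Tonelli (Lemma~\ref{lem:FT}) is required.
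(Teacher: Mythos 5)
Your proof is correct and follows essentially the same route as the paper's: greedily cover the positions of $T$ by witnesses from $\sS{S}$, set $\tau_i = \red_{T_i}\tau$ so that every required marginalization identity reduces to composing $\red$ on the single measure $\tau$, and for the converse trace each attribute of $T=T_k$ back to the witness $S_j$ that introduced it. The one step the paper makes explicit that you defer to ``bookkeeping'' is shrinking each witness $S_i$ so that its image in $T$ (and its overlap with $T_{i-1}$) is contiguous, which is precisely what makes your union-of-images sublist agree, as an ordered list, with the output of $\Merge$ in Definition~\ref{defn:merge}.
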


\begin{proof}
Suppose $(T,\tau) \in \sS{F}^0$.  Let $a_0 \in T$ denote the first attribute of
$T$.  By the definition of $\sS{F}^0$, there exists $(S_0,
\sigma_0) \in \sS{S}$ with an attribute inclusion $\iota_0:S_0 \into T$ such that
$\red_{\iota_0}\tau = \sigma_0$ and such that $a_0$ is in the image of
$\iota_0$.  Let $(T_0, \tau_0) = (S_0,\sigma_0)$.  By reducing $T_0$ if
necessary, we may ensure that $\iota_0(T_0)$ is contiguous within $T$.
If $T_0 = T$, then the sequence is complete.

Otherwise, there exists some first attribute $a_1$ in $T/\iota_0$.
By the definition of $\sS{F}^0$, there exists $(S_1, \sigma_1) \in \sS{S}$ with
an attribute inclusion $\iota_1:S_1 \into T$ such that $\red_{\iota_1}\tau =
\sigma_1$ and such that $a_1$ is in the image of $\iota_1$.
By reducing $S_1$ if necessary, we may ensure that $\iota_1(S_1)$ is contiguous
within $T$, and that $T_0 \cap S_1$ is also contiguous.  With these reductions, 
the orderings are consistent such that  
$T_1 \defeq \Merge(T_0,S_1, T_0 \cap S_1 )$ is equipped with a list inclusion
$T_1 \into T$.  Because $\tau$ is given, let $\tau_1 =
\red_{T_1}\tau$, which by construction is an element of
$\JOINS(\tau_0,\sigma_1, T_0\cap S_1)$.
Repeat this process until all elements $a_i$ of $T$ are in the image of some
inclusion $T_i \into T$.

For the converse, note that each $a \in T$ is included in some $S_i$, which is
sufficient.
\end{proof}

\begin{cor}
$\sS{F}^0$ includes all independent products formed from data tables in $\sS{S}$.
\end{cor}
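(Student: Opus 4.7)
The plan is to apply the immediately preceding lemma, which characterizes $\sS{F}^0$ as the collection of data tables reachable from $\sS{S}$ by a finite sequence of $\JOINS$-style extensions. Accordingly, I will realize any independent product built from $\sS{S}$ as an iterated \emph{trivial} join, where at each step the overlap list is empty.

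First, I would unpack the independent product. For data tables $(S_1,\sigma_1),\ldots,(S_k,\sigma_k)\in\sS{S}$ of common total mass $M\defeq \tot\sigma_1=\cdots=\tot\sigma_k$, the independent product is the measure $\sigma_1\sigma_2\cdots\sigma_k/M^{k-1}$ on $\Val(S_1\oplus\cdots\oplus S_k)$. A direct application of Fubini--Tonelli, as in Definition~\ref{defn:FT}, shows that marginalizing this product over any factor $\Val(S_i)$ yields the analogous normalized product on the remaining attributes; in particular, each marginalization down to a single $S_j$ recovers exactly $\sigma_j$, without leftover mass factors.

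Second, I would construct the sequence required by the previous lemma. Set $(T_0,\tau_0)\defeq(S_1,\sigma_1)$ and, inductively, $T_i\defeq T_{i-1}\oplus S_i$ together with $\tau_i\defeq \tau_{i-1}\sigma_i/M$. Because $\oplus$ is plain concatenation (Definition~\ref{def:listsum}), attribute occurrences coming from $T_{i-1}$ and from $S_i$ are recorded as distinct indices even when they share an attribute symbol, so the ``overlap'' list $T_{i-1}\cap S_i$ as used in the previous lemma is the empty list $[\,]$. The marginal computation from the first step then yields $\red_{T_{i-1}}\tau_i=\tau_{i-1}$ and $\red_{S_i}\tau_i=\sigma_i$, so $(T_i,\tau_i)\in\JOINS(\tau_{i-1},\sigma_i,[\,])$; the hypothesis of the lemma is met at each stage.

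Third, after $k-1$ steps the table $(T_{k-1},\tau_{k-1})$ coincides with the independent product in question, and the lemma therefore places it in $\sS{F}^0$. The only delicate point, rather than a true obstacle, is recognizing that $T_{i-1}\cap S_i=[\,]$ in the sense used by the lemma whenever one is chaining along $\oplus$-sums; that is a matter of the concatenation convention, not of analysis. Beyond this bookkeeping, the proof is a single invocation of Fubini--Tonelli, exactly the ingredient already supplied by the face-map calculus of Section~\ref{sec:operations}.
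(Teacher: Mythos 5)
Your proof is correct and matches the paper's intended argument: the corollary is stated as an immediate consequence of the preceding lemma, and you realize it exactly as intended by chaining trivial joins with empty overlap list and checking via Fubini--Tonelli that the normalized product $\sigma_1\cdots\sigma_k/M^{k-1}$ marginalizes back to each factor. The only blemish is an off-by-one in your inductive definition (with $(T_0,\tau_0)=(S_1,\sigma_1)$ the recursion should read $T_i = T_{i-1}\oplus S_{i+1}$ and $\tau_i = \tau_{i-1}\sigma_{i+1}/M$), which is purely notational and does not affect the argument.
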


\begin{lemma}
For any data subcomplex $\sS{S}$ of an ambient $\sS{X}$, the complex of perfect joins
$\sS{F}^0$ is fibrant. 
\label{lem:F0}
\end{lemma}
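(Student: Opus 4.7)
The plan is to unpack the definition of fibrant (Definition \ref{def:fibrant}) and verify directly that $\sS{F}^0$ satisfies the strong join condition. That is, given $(T_{01},\tau_{01}),(T_{02},\tau_{02})\in\sS{F}^0$ together with attribute inclusions $\iota_{01}:T_0\into T_{01}$ and $\iota_{02}:T_0\into T_{02}$ such that $\red_{\iota_{01}}\tau_{01}=\red_{\iota_{02}}\tau_{02}$, and any $(T_{012},\tau_{012})\in\JOINS(\tau_{01},\tau_{02},\iota_{01}\sim\iota_{02})$, I must show $(T_{012},\tau_{012})\in\sS{F}^0$.

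First I would invoke Lemma~\ref{lem:decomp}, which says that the merged list $T_{012}$ is partitioned by the inclusions $\iota_0:T_0\into T_{012}$, $\iota_1:T_1\into T_{012}$, and $\iota_2:T_2\into T_{012}$, where $T_i$ are the appropriate quotients. In particular, every attribute $a\in T_{012}$ lies in the image of either $\iota'_{01}:T_{01}\into T_{012}$ or $\iota'_{02}:T_{02}\into T_{012}$. Fix such an $a$, and without loss of generality assume $a\in\iota'_{01}(T_{01})$, so that $a$ corresponds to some entry of $T_{01}$.

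Next, since $(T_{01},\tau_{01})\in\sS{F}^0$, the definition produces a data table $(S,\sigma)\in\sS{S}|_{T_{01}}$ with $a\in S$ (viewed through the inclusion $S\into T_{01}$) and $\red_S\tau_{01}=\sigma$. Composing with $\iota'_{01}$ gives an attribute inclusion $S\into T_{01}\into T_{012}$, hence $(S,\sigma)\in\sS{S}|_{T_{012}}$ and $a\in S$ as an entry of $T_{012}$. To verify $\red_S\tau_{012}=\sigma$, I would use the functoriality of reduction under composition of inclusions together with the defining property of $\JOINS$, namely $\red_{\iota'_{01}}\tau_{012}=\tau_{01}$. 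Then
\begin{equation*}
\red_S\tau_{012}=\red_S\bigl(\red_{\iota'_{01}}\tau_{012}\bigr)=\red_S\tau_{01}=\sigma,
\end{equation*}
where the first equality is the evident chain rule for marginalization integrals (a direct consequence of Fubini--Tonelli, Lemma~\ref{lem:FT}, applied to the sequence of face maps corresponding to the composed inclusion via Corollary~\ref{cor:include}). Since $a\in T_{012}$ was arbitrary, $(T_{012},\tau_{012})\in\sS{F}^0$, as required.

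The only point that requires any care is the functoriality identity $\red_S\circ\red_{\iota'_{01}}=\red_{\iota'_{01}\circ\,S\hookrightarrow T_{01}}$, but this is immediate from Definition~\ref{defn:red} once the attribute inclusions are decomposed into face maps. I do not foresee any serious obstacle; the lemma is essentially a bookkeeping verification that the definition of $\sS{F}^0$ is stable under joining along arbitrary overlaps, which is exactly what the strong join condition asks for. (Note that the case where $a$ lies in the image of $\iota_0$ is handled by either of the above choices, so no ambiguity arises.)
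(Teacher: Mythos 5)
Your verification of the strong join condition is essentially the paper's own argument, spelled out in more detail: every attribute of $T_{012}$ comes from $T_{01}$ or $T_{02}$, the witnessing pair $(S,\sigma)\in\sS{S}$ is inherited through the composed inclusion $S\into T_{01}\into T_{012}$, and $\red_S\tau_{012}=\red_S\tau_{01}=\sigma$ follows from the defining property of $\JOINS$ together with functoriality of reduction. That part is correct, and your explicit use of Lemma~\ref{lem:decomp} and the chain rule for marginalization makes the bookkeeping cleaner than the paper's one-line version.

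What you omit is the first half of the paper's proof: checking that $\sS{F}^0$ is a data subcomplex at all, i.e., that it is closed under the face and degeneracy maps. Definition~\ref{def:fibrant} declares a \emph{data complex} satisfying the strong join condition to be fibrant, and $\sS{F}^0$ is introduced only as a subset of $\sS{X}$, so this closure is part of what must be proved. It is not entirely vacuous: if $(T,\tau)\in\sS{F}^0$ and one applies $d_i$, an attribute $a$ of $d_iT$ still has a witness $(S,\sigma)$ with $S\into T$, but $S$ may contain the deleted $i$th entry of $T$, so $S$ need not include into $d_iT$; one must replace $(S,\sigma)$ by the appropriate face of it, which lies in $\sS{S}$ precisely because $\sS{S}$ is closed under face maps, and then check the reduced measure still agrees. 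A sentence to this effect, with its analogue for degeneracies, would complete your proof.
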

\begin{proof}
The data subcomplex $\sS{S}$ is closed under face maps and degeneracy
maps, so application of those maps to all $(S,\sigma)$ in the definition shows
that $\sS{F}^0$ is closed under the face maps and degeneracy maps as well.
To verify that $\sS{F}^0$ is fibrant, suppose that $(T_{012}, \tau_{012}) \in \sS{X}$ is any join of $(T_{01},\tau_{01})$ and $(T_{02},\tau_{02})$ in $\sS{F}^0$.
Because every $a \in T_{012}$ appears in $T_{01}$ or $T_{02}$, the existence of
$(S,\sigma) \in \sS{S}$ in inherited from $(T_{01},\tau_{01})$ and
$(T_{02},\tau_{02})$.
\end{proof}

We conclude this section by tying simplicial homotopy theory to Problem \ref{GPmerge2}.
\begin{lemma}
Suppose $\sS{X}'$ is a fibrant data subcomplex of an ambient $\sS{X}$.
A basepoint-preserving simplicial map $f:\partial \Delta^{n} \to \sS{X}'$
defines a class in $\alpha(f) \in \pi_{n-1}(\sS{X}')$.  Moreover,
$\alpha(f)=e$ if and only if $f$ admits an extension $f^+:\Delta^{n} \to \sS{X}'$.
\end{lemma}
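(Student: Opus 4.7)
The plan is to invoke the classical dictionary between based maps out of $\partial \Delta^n$ and elements of $\pi_{n-1}$ for a Kan complex, as developed in \cite{Kan1958}, \cite[Chap. I]{Seriesa}, and \cite{Friedman2008}. Because this entire machinery is already available for arbitrary Kan complexes, the work reduces to verifying the hypotheses and translating the definitions to data subcomplexes. Fibrancy of $\sS{X}'$ gives the strong join condition, which by Theorem~\ref{thm:ft}(1) implies the simplicial Kan extension condition. The preceding corollary then yields that $\pi_{n-1}(\sS{X}', \tau_0)$ is well-defined as the set of based homotopy classes of based maps $(\partial \Delta^n, *) \to (\sS{X}', \tau_0)$ modulo simplicial homotopy fixing the basepoint.

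First I would define $\alpha(f)$ as this based homotopy class. Since $\partial \Delta^n$ is the standard simplicial model of $S^{n-1}$ and $f$ is basepoint-preserving, this class is a well-defined element of $\pi_{n-1}(\sS{X}', \tau_0)$. The proof that this relation is an equivalence relation and that the resulting set is a group (for $n \geq 2$) is standard and relies on the Kan condition to compose and invert homotopies.

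Next, for the equivalence ``$\alpha(f)=e$ iff $f$ extends'': the forward direction $(\Leftarrow)$ is easy, because $\Delta^n$ is simplicially contractible to its basepoint vertex, so any extension $f^+:\Delta^n \to \sS{X}'$ composed with the contracting homotopy furnishes a based null-homotopy of $f$, proving $\alpha(f)=e$. For the converse $(\Rightarrow)$, a null-homotopy is a simplicial map $H:\partial \Delta^n \times \Delta^1 \to \sS{X}'$ from the constant map at $\tau_0$ to $f$. Combining $H$ with the constant $n$-simplex $c_{\tau_0}$ on $\Delta^n \times \{0\}$ yields a map defined on the subcomplex $(\Delta^n \times \{0\}) \cup (\partial \Delta^n \times \Delta^1) \subset \Delta^n \times \Delta^1$. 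One then extends this map over all of $\Delta^n \times \Delta^1$ using the standard triangulation of the prism as a sequence of $(n{+}1)$-simplices, each of which presents a horn that can be filled by the Kan condition on $\sS{X}'$. Restriction to $\Delta^n \times \{1\}$ gives the desired $f^+$.

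The main obstacle would be the prism-decomposition argument in the converse direction, which is genuinely combinatorial: one must order the nondegenerate $(n{+}1)$-simplices in $\Delta^n \times \Delta^1$ so that each successive extension presents only a horn (never a general boundary), and then apply the Kan filler repeatedly. Since this is a purely simplicial-set statement about $\Delta^n \times \Delta^1$ that makes no reference to the particular Kan complex, I would simply cite the ``homotopy extension property for Kan complexes'' from \cite[Chap. I]{Seriesa} or \cite[Prop 3.8]{Friedman2008} rather than re-prove it. With that lemma in hand, the proof is complete.
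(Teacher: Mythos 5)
Your proposal is correct and follows essentially the same route as the paper: both verify that fibrancy (the strong join condition) yields the Kan extension condition via Theorem~\ref{thm:ft}(1) and then reduce the two claims to the standard facts about Kan complexes in \cite{Friedman2008} and \cite{Seriesa}. The paper's proof is simply a terser citation of those same results (plus a remark that path-connectedness lets one handle basepoints via a spanning tree), whereas you unpack the prism-filling argument before citing it; the mathematical content is identical.
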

\begin{proof}
The first claim reduces to Lemma 9.6 in \cite{Friedman2008}.  The second claim
reduces to Lemma 7.4 in \cite{Seriesa}.  Our definition of fibrant implies path-connectedness, so 
a spanning tree can be used for locality such as in \cite{Kan1958}.
\end{proof}

\begin{cor}
Suppose that $p:\sS{S}\to\sS{B}$ is a data subcomplex of an ambient $p:\sS{X}\to\sS{A}$ such that $\sS{B}_{n-1}  = \sS{A}_{n-1}$ for some $n\geq 1$.
Fix a simplicial section $\sigma:\sS{B}_{n-1} \to \sS{S}_{n-1}$.  The following are equivalent (omitting basepoints for brevity).
\begin{enumerate}
\item 
For every composition 
\[\partial \Delta^{n} \overset{c}{\to} \sS{B}_{n-1} \overset{\sigma}{\to} \sS{S}_{n-1} \overset{\iota}{\to} \sS{F}^0,\]
we have $\alpha(\iota \circ  \sigma \circ c) = e \in \pi_{n-1}(\sS{F}^0)$. 
\item $\sigma$ admits an extension of the form $\sigma^+:\sS{A}_{n} \to \sS{F}^0_{n}$.
\end{enumerate}
\end{cor}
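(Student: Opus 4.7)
The plan is to reduce this to a simplex-by-simplex application of the preceding lemma, exploiting that $\sS{F}^0$ is fibrant by Lemma~\ref{lem:F0} and that the hypothesis $\sS{B}_{n-1}=\sS{A}_{n-1}$ forces the boundary of every $n$-simplex of $\sS{A}$ to land in the domain of $\sigma$. Because $\sS{F}^0$ is fibrant and therefore path-connected, the choice of basepoint required by the preceding lemma is harmless.

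First I would set up a natural bijection between simplicial maps $c:\partial\Delta^n \to \sS{B}$ (which necessarily factor through $\sS{B}_{n-1}$) and attribute lists $T\in\sS{A}_n$. Given $T=[a_0,\ldots,a_n]$, one obtains $c_T$ by sending the $i$-th codimension-one face of $\Delta^n$ to $d_iT\in\sS{A}_{n-1}=\sS{B}_{n-1}$; conversely, the simplicial compatibilities $d_ic(j)=d_{j-1}c(i)$ uniquely reconstruct $T$ because $\sS{A}$ is freely generated by permutations of $A$ under face and degeneracy maps. This reduces the universal quantifier ``for every composition $c$'' in (1) to a quantifier ``for every $T\in\sS{A}_n$.''

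Next, for each such $T$ the composition $\iota\circ\sigma\circ c_T:\partial\Delta^n\to\sS{F}^0$ lands in a fibrant simplicial set, so the preceding lemma produces a class $\alpha(\iota\circ\sigma\circ c_T)\in\pi_{n-1}(\sS{F}^0)$ that vanishes precisely when the map extends to $\Delta^n\to\sS{F}^0$. Such an extension is exactly a data table $(T,\tau^+)\in\sS{F}^0_n$ with $d_i(T,\tau^+)=\sigma(d_iT)$ for every $i$, which is the value $\sigma^+(T)$. Assembling these pointwise fillers into a simplicial map $\sigma^+:\sS{A}_n\to\sS{F}^0_n$ is then direct: for each nondegenerate $T$, pick a filler as above, and for each degenerate $T=s_iT'$ with $T'\in\sS{A}_{n-1}$, set $\sigma^+(T)\defeq s_i\sigma(T')$ as forced by Theorem~\ref{thm:sset}. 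Compatibility between distinct $n$-simplices sharing an $(n-1)$-face is automatic, because each shared face lies in $\sS{B}_{n-1}$ where $\sigma$ already assigns a single fixed value. The converse direction is immediate: given $\sigma^+$, any $c$ corresponds to some $T_c\in\sS{A}_n$, and $\sigma^+(T_c)$ supplies the desired filler of $\Delta^n\to\sS{F}^0$, so $\alpha(\iota\circ\sigma\circ c)=e$ by the preceding lemma.

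The hard part will be justifying the bijection between boundary data and attribute lists in the first step, and checking that the face and degeneracy relations of $\sS{F}^0$ are preserved by the assembled $\sigma^+$. Neither is deep, but both use the specific structure of $\sS{A}$ from Theorem~\ref{thm:sset} rather than being formal for arbitrary simplicial sets. Once those are in hand, the corollary is obstruction-theoretic bookkeeping combining Lemma~\ref{lem:F0} with the preceding lemma, applied one nondegenerate $n$-cell at a time.
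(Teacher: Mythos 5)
Your proposal is correct and follows essentially the same route as the paper's (much terser) proof: use the hypothesis $\sS{B}_{n-1}=\sS{A}_{n-1}$ to identify boundary maps $\partial\Delta^n\to\sS{B}$ with $n$-simplices of $\sS{A}$, then apply the preceding lemma with $\sS{X}'=\sS{F}^0$ one $n$-cell at a time. The additional details you supply (reconstruction of $T$ from compatible faces, assembly of pointwise fillers, degenerate simplices handled by degeneracy maps) are accurate elaborations of what the paper leaves implicit.
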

\begin{proof}
Because $\sS{A}_{n-1} = \sS{B}_{n-1}$, the boundary of every $n$-simplex in $\sS{A}$ appears in $\sS{B}$.
Apply the previous lemma for each $f= \iota \circ \sigma \circ c$ as a map $f:\partial \Delta^{n} \to \sS{X}'$ for $\sS{X}' = \sS{F}^0$. 
\end{proof}
This corollary is revisited as Lemma~\ref{thm:cocycle0}. The corollary fails when no such extension can be found.  Then, the question remains: how to measure the failure of this corollary?
That measurement is the purpose of filtered obstruction theory.

\section{Filtrations and Obstructions}
\label{sec:FO}
This section concludes the theoretical framework outlined in Section~\ref{sec:IO}.
Together, obstructions and filtrations allow us to detect when merging is possible; if merging appears obstructed, we can determine whether merging can be achieved by reverting a previous merge or by altering some of the data tables.
Section \ref{sec:FDS} introduces a filtration from a data subcomplex $\sS{S}$ to its ambient $\sS{X}$ using the Wasserstein distance.
Each level of the filtration is fibrant, which allows one to define an obstruction cocycle (Section \ref{sec:PoT}) at each level of the filtration.
Eventually, for a high enough level in the filtration, the obstruction cocycle becomes trivial, so the importance of the obstruction cocycle can be measured using topological persistence.
This statement is formalized in Theorem \ref{thm:tri}, which can be seen as the main payoff of our theoretical development in terms of database engineering. As promised in the introduction, the theory of data complexes does not just mathematize the notion of table merging; rather, it provides further powerful operations when traditional merging is impossible.

\subsection{Filtrations from Data Subcomplexes}
\label{sec:FDS}

A general notion of persistence on simplicial sets appears in \cite{Otter2018}.  In summary,
a fibrant filtration of simplicial sets is a bi-graded collection of sets
$\{ \sS{F}^t_n \}$ for $0 \leq t \leq \infty$ and $n \in \mathbb{N}$ equipped with 
maps $d_i$ and $s_i$ such that
\begin{enumerate}
\item $(\sS{F}^t, d_i, s_i)$ is a simplicial set for each $t$,
\item $\sS{F}^{s}_i \subseteq \sS{F}^{t}_i$ for all $s \leq t$, and
\item $(\sS{F}^t, d_i, s_i)$ is fibrant for each $t$.
\end{enumerate}
The fibrant condition implies that $\pi_n(\sS{F}^t)$ is well-defined for all $t$, and the inclusion maps $\sS{F}^s \into \sS{F}^t$ induce maps on homotopy, $\pi_n(\sS{F}^s) \to
\pi_n(\sS{F}^t)$.

We now define a specific filtration for a data subcomplex that is designed to
meet our application regarding joining data tables.
Recall that $(\Val(a),\rho_a)$ is a Radon space for each attribute $a$.

\begin{defn}[Wasserstein Distance]
For any $a \in A$ with $(\Val(a), \rho_a)$, and $\tau_1,\tau_2 \in \Msr(a)$, let
\begin{equation}
w_a(\tau_1, \tau_2) \defeq \inf\left\{ \int_{\Val([a,a])}
\rho_a(x_1,x_2)\, \mathrm{d}\mu(x_1,x_2) ~:~ \mu
\in \Msr([a,a]),\ \red_{1}\mu = \tau_1,\ \red_{2} \mu = \tau_2\right\}.
\end{equation}
The reductions $\red_{1}$ and $\red_{2}$ refer to the two copies of the
attribute $a$.

For any $T \in \sS{A}$ and $\tau_1,\tau_2 \in \Msr(T)$, let 
\begin{equation}
w_T(\tau_1, \tau_2) \defeq \inf\left\{\int_{\Val(T \oplus T)} \rho_T(x_1,x_2)
\mathrm{d}\mu(x_1,x_2) \,:\, \mu
\in \Msr(T\oplus T),\ \red_{1} \mu = \tau_1,\ \red_{2} \mu = \tau_2\right\}.
\end{equation}
The reductions $\red_{1}$ and $\red_{2}$ refer to the two interwoven copies of the
attribute list $T$.
\label{def:wasserstein}\end{defn}
\begin{rmk}
Recall that $\rho_T(x_1, x_2) = \max_{a \in T}
\rho_a(x_{1,a},x_{2,a})$, the $L^\infty$-metric obtained from the individual attribute metrics.
Also, in the special case that $\tot\tau_1 = \tot \tau_2$, the infimum argument $\mu$ lies
in the space of trivial joins, $\JOINS(\tau_1, \tau_2, [ ])$, so the Wasserstein distance is tied
to our notion of fibrant data complexes.
\end{rmk}

\begin{lemma}
Suppose that $\tau_1, \tau_2 \in T$. 
If $w_T(\tau_1, \tau_2) = t$, then $w_{d_i T}(d_i \tau_1, d_i \tau_2) \leq
t$.
\end{lemma}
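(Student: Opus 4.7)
The plan is to take any near-optimal coupling on $T \oplus T$ and push it forward to a coupling on $d_iT \oplus d_iT$ by marginalizing out \emph{both} copies of the $i$th attribute; then show the cost can only decrease.

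More precisely, fix $\epsilon > 0$ and choose $\mu \in \Msr(T \oplus T)$ with $\red_1 \mu = \tau_1$, $\red_2 \mu = \tau_2$, and $\int \rho_T \, d\mu \leq t + \epsilon$. Let $i_1, i_2$ denote the indices in $T \oplus T$ corresponding to the two copies of the $i$th entry of $T$, and set $\mu' \defeq d_{i_1} d_{i_2} \mu \in \Msr(d_iT \oplus d_iT)$. By Lemma~\ref{lem:FT} (Fubini--Tonelli), the two marginalization operations commute in every order, so $\red_1 \mu' = d_i(\red_1 \mu) = d_i \tau_1$ and similarly $\red_2 \mu' = d_i \tau_2$. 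Hence $\mu'$ is an admissible coupling for computing $w_{d_iT}(d_i \tau_1, d_i \tau_2)$.

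For the cost bound, observe that under the projection $\pi: \Val(T \oplus T) \to \Val(d_iT \oplus d_iT)$ that forgets the two copies of $\Val(a_i)$, the pushforward is $\mu'$, i.e.\ $\int f(y_1,y_2) \, d\mu'(y_1,y_2) = \int f(\pi(x_1,x_2)) \, d\mu(x_1,x_2)$ for measurable $f$. Taking $f = \rho_{d_iT}$ and using that the $L^\infty$ metric over a sublist is bounded by the $L^\infty$ metric over the whole list, $\rho_{d_iT}(\pi(x_1,x_2)) \leq \rho_T(x_1,x_2)$ pointwise, so
\begin{equation*}
\int \rho_{d_iT} \, d\mu' \;\leq\; \int \rho_T \, d\mu \;\leq\; t + \epsilon.
\end{equation*}
Therefore $w_{d_iT}(d_i \tau_1, d_i \tau_2) \leq t + \epsilon$, and sending $\epsilon \to 0$ yields the conclusion.

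There is no real obstacle here; the only items of care are (a) invoking Fubini--Tonelli to move the $d_i$ past the marginalizations $\red_1, \red_2$ so that the projected coupling has the right marginals, and (b) noting that the choice of the $L^\infty$ product metric is precisely what makes $\rho_{d_iT} \leq \rho_T$ hold as a pointwise inequality rather than merely up to a dimension-dependent constant, which is the reason the authors flagged this metric choice earlier.
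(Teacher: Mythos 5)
Your proposal is correct and follows essentially the same route as the paper: push an admissible coupling $\mu$ forward by marginalizing out both copies of the $i$th attribute, and use the pointwise inequality $\rho_{d_iT} \leq \rho_T$ coming from the $L^\infty$ product metric to bound the transport cost of the pushforward. The paper phrases the conclusion directly as a comparison of infima rather than via an explicit $\epsilon$, but the content is identical.
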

\begin{proof}
Let $a$ indicate the $i$th attribute of $T$, and write $T'=d_iT$ with inclusion
$T' \into T$ and quotient inclusion $[a] \into T$.
Then write $(T',\tau'_1) \defeq d_i (T,\tau_1)$ and 
$(T',\tau'_2) \defeq d_i (T,\tau_2)$ and 
For any $\mu \in \Msr(T \oplus T)$ such that $\red_1 \mu = \tau_1$ and
$\red_2 \mu = \tau_2$, let $\mu' \in \Msr(T' \oplus T')$ be the reduction of
$\mu$ obtained by applying both copies of $\red_{T'}=d_i$.  Also, we use the
notational convention $x = (y,z)$ for $x \in \Val(T)$, $y\in \Val(T')$, $z \in
\Val([a])$, so the $L^\infty$ metric gives $\rho_T(x_1,x_2) \geq
\rho_{T'}(y_1,y_2)$.
\begin{equation}
\begin{split}
&
\int_{(x_1, x_2) \in \Val(T \oplus T)} \rho_{T}(x_{1},x_{2})\mathrm{d}\mu(x_1,x_2) \\
&\geq
\int_{((y_1,z_1), (y_2,z_2)) \in \Val(T \oplus T)} \rho_{T'}(y_1, y_2)
\mathrm{d}\mu(
(y_1,z_1),(y_2,z_2)) \\
&=
\int_{(y_1,y_2) \in \Val(T' \oplus T')}\int_{(z_1,z_2) \in \Val([a,a])}
\rho_{T'}(y_1,y_2)\mathrm{d}\mu( (y_1,z_1),(y_2,z_2)) \\
&= \int_{(y_1, y_2) \in \Val(T \oplus T)} \rho_{T'}
(y_{1},y_{2})\mathrm{d}\mu'( y_1,y_2).
\end{split}
\end{equation}
Therefore, the infimum defining $w_{T'}(\tau'_1, \tau'_2)$ cannot be greater than the
infimum defining $w_{T}(\tau_1, \tau_2)$. 
\end{proof}

\begin{lemma}
Suppose that $\tau_1, \tau_2 \in T$. 
If $w_T(\tau_1, \tau_2) = t$, then $w_{s_i T}(s_i \tau_1, s_i \tau_2) \leq
t$.
\end{lemma}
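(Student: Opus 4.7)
The plan is to mimic the previous lemma (for face maps) but running the reduction backwards: push every optimal coupling for $(\tau_1,\tau_2)$ forward along the diagonal in the $i$th slot to produce a coupling for $(s_i\tau_1, s_i\tau_2)$ whose cost is \emph{identical}, not merely bounded. Because the ambient metric is $L^\infty$ and the duplicated attribute is $a_i$ itself, the extra coordinate contributes nothing new to the max.

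Concretely, let $a = a_i$ be the $i$th entry of $T$, and define the embedding
\[
\Phi:\Val(T\oplus T)\to\Val(s_iT\oplus s_iT)
\]
by
\[
\Phi\bigl((x_0,\ldots,x_i,\ldots,x_n),(x_0',\ldots,x_i',\ldots,x_n')\bigr)=\bigl((x_0,\ldots,x_i,x_i,\ldots,x_n),(x_0',\ldots,x_i',x_i',\ldots,x_n')\bigr).
\]
Given any $\mu\in\Msr(T\oplus T)$ with $\red_1\mu=\tau_1$ and $\red_2\mu=\tau_2$, let $\tilde\mu=\Phi_*\mu$ be the pushforward.

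The proof then consists of three checks. First, $\red_1\tilde\mu=s_i\tau_1$ and $\red_2\tilde\mu=s_i\tau_2$: evaluating on a basis set $U_0\times\cdots\times U_i\times U_i'\times\cdots\times U_n$, the preimage under $\Phi$ in the first factor is $U_0\times\cdots\times(U_i\cap U_i')\times\cdots\times U_n$ (times all of $\Val(T)$ in the second factor), which by Definition~\ref{defn:DT} is exactly how $s_i\tau_1$ is defined from $\tau_1=\red_1\mu$. The second marginal is symmetric. Second, for every $(x,x')\in\Val(T\oplus T)$ one has
\[
\rho_{s_iT}\bigl(\Phi(x,x')\bigr)=\max_{b\in s_iT}\rho_b\bigl(\Phi(x,x')_{\cdot,b}\bigr)=\max_{b\in T}\rho_b(x_b,x'_b)=\rho_T(x,x'),
\]
since the duplicated $i$th slot contributes $\rho_a(x_i,x_i')$, which already appears in the max on the right. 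Third, by the change-of-variables formula for the pushforward,
\[
\int_{\Val(s_iT\oplus s_iT)}\rho_{s_iT}(y,z)\,\mathrm d\tilde\mu(y,z)=\int_{\Val(T\oplus T)}\rho_T(x,x')\,\mathrm d\mu(x,x').
\]

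Combining these, $\tilde\mu$ is admissible in the infimum defining $w_{s_iT}(s_i\tau_1,s_i\tau_2)$ and has cost equal to the cost of $\mu$. Taking the infimum over $\mu$ yields $w_{s_iT}(s_i\tau_1,s_i\tau_2)\leq w_T(\tau_1,\tau_2)=t$. The only subtlety is administrative---keeping the two "primed" copies of the repeated slot in $s_iT\oplus s_iT$ straight against the two copies of $T$---but once $\Phi$ is written down, the $L^\infty$ choice of product metric makes the cost identity automatic. I do not anticipate a substantive obstacle; in fact the inequality is an equality, which suggests that for any product metric satisfying $\rho_{s_iT}\circ\Phi=\rho_T$ the same argument applies verbatim.
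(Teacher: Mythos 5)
Your proof is correct and takes essentially the same approach as the paper: the paper likewise pushes each admissible coupling $\mu$ forward along both copies of $s_i$ (phrasing the pushforward as a ``distributional delta''), observes that $\rho_{s_iT}(s_iy_1,s_iy_2)=\rho_T(y_1,y_2)$ under the $L^\infty$ metric, and concludes the two costs are equal, so the infimum can only decrease. Your explicit check that the marginals of $\Phi_*\mu$ are $s_i\tau_1$ and $s_i\tau_2$ via Definition~\ref{defn:DT} is a detail the paper leaves implicit, but the argument is the same.
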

\begin{proof}
Let $a$ indicate the $i$th attribute of $T$.  Let $T^+ \defeq s_i T$, equipped with the degeneracy inclusion $T\into T^+$ and its quotient $[a]\into T^+$.
Then write $(T^+, \tau^+_1) \defeq s_i (T,\tau_1)$
and $(T^+, \tau^+_2) \defeq s_i (T,\tau_2)$.
Now, for any $\mu \in \Msr(T \oplus T)$ such that $\red_1 \mu = \tau_1$ and
$\red_2 \mu = \tau_2$, let $\mu^+ \in \Msr(T^+ \oplus T^+)$ be the degeneracy of
$\mu$ obtained by applying both copies of $s_i$. 
Consider the integral $\int_{(x_1,x_2) \in \Val(T^+ \oplus T^+)}
\rho_{T^+}\mathrm{d}\mu^+$.  Note that the distributional form of the
degeneracy is a delta,
\begin{equation}
\mathrm{d}\mu^+(x_1, x_2) = 
\begin{cases}
\mathrm{d}\mu(y_1,y_2),& \text{if $x_1 = s_i y_1$, $x_2 = s_i y_2$ for some $(y_1,y_2)
\in \Val(T \oplus T)$,}\\
0,& \text{otherwise}.
\end{cases}
\end{equation}
Moreover, if $x_1 = s_i y_1$, $x_2 = s_i y_2$ for some $(y_1,y_2) \in \Val(T
\oplus T)$, then 
$\rho_{T^+}(x_1, x_2) = \rho_T (y_1,y_2)$.
Together, these give
$\int_{(x_1,x_2) \in \Val(T^+ \oplus T^+)}
\rho_{T^+}\mu^+ = \int_{(y_1, y_2) \in \Val(T \oplus T)} \rho_T \mathrm{d}\mu$.
Therefore, the infimum defining $w_{s_i T}(s_i\tau_1, s_i\tau_2)$ cannot be greater
than the infimum defining $w_{T}(\tau_1,\tau_2)$.
\end{proof}

Now we produce a particular fibrant filtration for a data subcomplex.
\begin{defn}[The Complex of Approximate Joins]
\label{defn:approx}
Let $p:\sS{S}\to\sS{B}$ be a data subcomplex of an ambient $p:\sS{X}\to\sS{A}$. 
For any $0 \leq t \leq \infty$, let 
\[
\sS{F}^t \defeq \{ (T,\tau) \in \sS{X} ~:~ \forall a \in T, \exists (S,
\sigma) \in \sS{S},\ [a]\into  S\into T,\ w_{S}(\red_S \tau, \sigma)
\leq t\}.\] 
\end{defn}
Note that the case $t=0$ reproduces the complex of perfect joins,  
$\sS{F}^0$. Note also that $\sS{F}^\infty = \sS{X}$.

\begin{thm}
For each $t \in [0,\infty]$, $\sS{F}^t$ is a fibrant data subcomplex of $\sS{X}$.
\end{thm}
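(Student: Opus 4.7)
The plan is to verify the two defining properties of a fibrant data subcomplex in sequence: first that $\sS{F}^t$ is closed under face and degeneracy maps (so that it is a data subcomplex at all), and second that it satisfies the strong join condition of Definition~\ref{def:joincond} (which, by Definition~\ref{def:fibrant}, is what it means to be fibrant).

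First I would show closure under face maps. Fix $(T,\tau) \in \sS{F}^t$ and consider $d_i(T,\tau) = (d_iT, d_i\tau)$. For any attribute $a \in d_iT$ (i.e.\ any $a$ occurring in $T$ at a position other than $i$), the hypothesis gives a table $(S,\sigma)\in \sS{S}$ with $[a]\into S \into T$ and $w_S(\red_S\tau,\sigma)\leq t$. There are two cases according to whether the inclusion $S \into T$ meets the deleted position $i$. If it does not, then $S \into d_iT$ and $\red_S(d_i\tau)=\red_S\tau$, so the same witness $(S,\sigma)$ certifies the $\sS{F}^t$-condition for $a$. If it does hit position $i$ (say at position $j$ of $S$, and necessarily $j$ does not correspond to $a$ since $a \in d_iT$), then we replace $(S,\sigma)$ with its face $(d_jS, d_j\sigma)\in\sS{S}$, which inherits $[a]\into d_jS \into d_iT$, and apply the first Wasserstein lemma to get $w_{d_jS}(\red_{d_jS}(d_i\tau), d_j\sigma) = w_{d_jS}(d_j\red_S\tau, d_j\sigma) \leq w_S(\red_S\tau,\sigma)\leq t$. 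Closure under degeneracies $s_i$ is analogous, using the second Wasserstein lemma (applied either trivially when $S$ avoids position $i$, or via $s_jS$ when it does not).

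Next I would verify the strong join condition. Suppose $(T_{01},\tau_{01}),(T_{02},\tau_{02})\in\sS{F}^t$ together with common inclusions of some $T_0$ satisfying $\red_{T_0}\tau_{01}=\red_{T_0}\tau_{02}$, and let $(T_{012},\tau_{012})$ be any element of $\JOINS(\tau_{01},\tau_{02},T_0)$. Each $a\in T_{012}$ belongs to $T_{01}$ or $T_{02}$; without loss of generality $a\in T_{01}$. The hypothesis on $(T_{01},\tau_{01})$ supplies $(S,\sigma)\in\sS{S}$ with $[a]\into S \into T_{01}$ and $w_S(\red_S\tau_{01},\sigma)\leq t$. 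Composing with $T_{01}\into T_{012}$ gives $[a]\into S \into T_{012}$, and since $\red_{T_{01}}\tau_{012}=\tau_{01}$ implies $\red_S\tau_{012}=\red_S\tau_{01}$, the same $(S,\sigma)$ witnesses the $\sS{F}^t$-condition for $(T_{012},\tau_{012})$ at $a$. Thus $(T_{012},\tau_{012})\in\sS{F}^t$.

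The main obstacle I expect is bookkeeping: the quantifier ``$\forall a\in T$'' in the definition of $\sS{F}^t$ refers to positions in the attribute list rather than to elements of $A$, so care is needed when a repeated attribute appears both in $T$ and in the argument position deleted by a face map, or when a position is doubled by a degeneracy. Both the two Wasserstein lemmas above and the construction of $(d_jS,d_j\sigma)$ or $(s_jS, s_j\sigma)$ are stated and proved at the level of positions, so once the bookkeeping is fixed the inequalities pass through without further work. No genuinely new estimate is required beyond the two monotonicity lemmas already established.
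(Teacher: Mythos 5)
Your proposal is correct and follows essentially the same route as the paper: closure under faces and degeneracies via the two Wasserstein monotonicity lemmas, and the strong join condition by noting that each attribute of $T_{012}$ lies in $T_{01}$ or $T_{02}$, whose witness $(S,\sigma)$ transfers because $\red_S\tau_{012}=\red_S\tau_{01}$. The paper's own argument is terser (it simply invokes the proof of the $\sS{F}^0$ case together with the two lemmas), whereas you spell out the position-level case analysis explicitly, but no new idea is involved.
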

The proof is identical to the proof of Lemma~\ref{lem:F0}, replacing the
equality with an inequality.
\begin{proof}
Recall that the data complex $\sS{S}$ is closed under face maps and degeneracy
maps. Note the face and degeneracy bounds for the Wasserstein distance given
above. Application of those maps to the $(S,\sigma)$ and $(T,\tau)$ in the definition shows
that $\sS{F}^t$ is closed under the face maps and degeneracy maps as well.
Therefore, $\sS{F}^t$ is a data subcomplex.

To verify that $\sS{F}^t$ is fibrant, apply Theorem~\ref{thm:Xfibrant} to
obtain all joins 
$(T_{012}, \tau_{012}) \in \sS{X}$ from 
any $(T_{01},\tau_{01})$ and $(T_{02},\tau_{02})$ in $\sS{F}^t$.
We must show such $\tau_{012}$ lies in $\sS{F}^t$.  Fix $a \in T_{012}$.
Because every $a \in T_{012}$, it appears in $T_{01}$ or $T_{02}$.  For
concreteness, assume $a \in T_{01}$.
There is some  $(S,\sigma) \in \sS{S}$ such that 
$w_S(\red_S \tau_{01}, \sigma) \leq t$.  By the construction of $\tau_{012}$,
we have $\red_{T_{01}}\tau_{012} = \tau_{01}$, so 
$\red_S \tau_{012}  = \red_S \tau_{01}$.
Hence,
$w_S(\red_S \tau_{012}, \sigma) \leq t$. 
\end{proof}

Because $\sS{F}^t$ is fibrant, all of the usual consequences apply in homotopical algebra, such as
\begin{cor}
Fix a data subcomplex $\sS{S}$ of an ambient $\sS{X}$.
For each $t \in [0,\infty]$, and for each $n\geq 0$, the pointed homotopy group $\pi_n(\sS{F}^t,*)$ is well-defined.
Moreover, for $t_1 \leq t_2$, the inclusion of data subcomplexes $\sS{F}^{t_1} \subset \sS{F}^{t_2}$ induces a homomorphism of pointed homotopy groups $\pi_n(\sS{F}^{t_1},*) \to \pi_n(\sS{F}^{t_2},*)$.
\end{cor}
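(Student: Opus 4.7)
The plan is to reduce both claims to the preceding theorem (which established that each $\sS{F}^t$ is fibrant) together with the standard fact from simplicial homotopy theory that $\pi_n$ is a functor on the category of pointed Kan complexes (see \cite{Kan1958, Quillen1967, May1967}). Our working definition of fibrant (Defn~\ref{def:fibrant}) requires the strong join condition, and by Theorem~\ref{thm:ft}(1) this implies the Kan extension property, so all standard results about homotopy of Kan complexes apply verbatim to each $\sS{F}^t$.

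For the first claim, I would begin by fixing a basepoint data table $* = (T_*,\tau_*) \in \sS{F}^0 \subseteq \sS{F}^t$; such a basepoint exists since $\sS{F}^0$ contains $\sS{S}$, which is nonempty in any situation of interest. Applying the corollary following Defn~\ref{def:fibrant} (which in turn invokes the standard simplicial homotopy theory of \cite{Kan1958, Quillen1967}) to the fibrant simplicial set $\sS{F}^t$ with basepoint $*$, the homotopy group $\pi_n(\sS{F}^t,*)$ is well-defined for all $n \geq 0$, as the usual equivalence classes of maps $(\Delta^n,\partial\Delta^n) \to (\sS{F}^t,*)$ with composition via horn-filling.

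For the second claim, observe that the set inclusion $\sS{F}^{t_1} \subseteq \sS{F}^{t_2}$ commutes with the face and degeneracy maps, since these are defined identically in both complexes as marginalizations and diagonalizations of the same underlying measures. Hence the inclusion is a simplicial map, and since both $\sS{F}^{t_1}$ and $\sS{F}^{t_2}$ contain the common basepoint $*$ (as $* \in \sS{F}^0 \subseteq \sS{F}^{t_1} \subseteq \sS{F}^{t_2}$), the inclusion is basepoint preserving. Functoriality of $\pi_n$ on pointed Kan complexes then yields the induced homomorphism $\pi_n(\sS{F}^{t_1},*) \to \pi_n(\sS{F}^{t_2},*)$.

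The proof is essentially bookkeeping: no step involves serious combinatorial or measure-theoretic work, because the fibrancy of each $\sS{F}^t$---which was the genuinely nontrivial input, requiring the bifurcating dyadic construction of Theorem~\ref{thm:ft}(1)---has already been proven. The only mild subtlety is ensuring that the basepoint can be chosen uniformly across the filtration; this is immediate from $\sS{F}^0 \subseteq \sS{F}^t$ for all $t \geq 0$ and the nontriviality assumption inherited from $\sS{S}$.
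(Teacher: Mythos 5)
Your proposal is correct and follows essentially the same route as the paper, which simply asserts the corollary as one of ``the usual consequences'' of the fibrancy of each $\sS{F}^t$ established in the preceding theorem. Your explicit bookkeeping---choosing the basepoint in $\sS{F}^0$ so it lies in every level, noting the inclusion is a basepoint-preserving simplicial map, and invoking functoriality of $\pi_n$ on fibrant simplicial sets---is exactly the intended argument.
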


\subsection{Persistent Obstruction Theory for Data Subcomplexes}
\label{sec:PoT}
Because we have established fibrant objects with resulting homotopy and
homology, we are equipped to extend obstruction theory to our application.
Although our category is not classical, the next several results are modeled on the classical work summarized in
Section 6 of \cite{Thurber1997}, Section 34 of \cite{Steenrod1943}, Section 4 of \cite{Lundell1960}, and
\cite{Olum1950}.  
The discussion culminates in Definition~\ref{def:cocycle} and Theorem~\ref{thm:tri}.

\begin{defn}[Obstruction Cocycle]
Let $\sS{S} \subseteq \sS{F}^0 \subset \cdots \sS{F}^t \subset \cdots
\subset \sS{F}^\infty = \sS{X}$ be the filtration of a path-connected data complex.
Fix a dimension $n$ such that $d_i Y \in \sS{B}_{n-1}$ for all faces $d_i$ of all
$Y \in  \sS{A}_{n}$.
Let $\sigma:\sS{B}_{n-1} \to \sS{S}_{n-1}$ be a data section. 
For a fixed basepoint $(T_0,\tau_0) \in \sS{S} \subset \sS{F}^0$,
define \begin{equation}
\xi^t_\sigma \in C^{n}(\sS{A}, R; \pi_{n-1}(\sS{F}^t, (T_0, \tau_0)))
\end{equation}
to be the element of $\pi_{n-1}(\sS{F}^t,
(T_0, \tau_0))$ that is represented by the loop corresponding\footnote{The well-definedness of this loop is implied by our assumption $R=\Zb$.} to
the cycle $\sigma(\partial Y) \in C_{n-1}(\sS{S}) \subset C_{n-1}(\sS{F}^t)$ for any $Y \in \sS{A}_{n}$.  Extend by linearity
for $Y \in C_n(\sS{A}, R)$.
We typically omit the basepoint and ring for brevity, so $\xi^t_\sigma \in
C^{n}(\sS{A}; \pi_{n-1}(\sS{F}^t))$.
\label{def:cocycle}
\end{defn}

\begin{lemma}
Fix $Y \in \sS{A}_{n}$.  
If $\xi^t_\sigma(Y) = e \in \pi_{n-1}(\sS{F}^t)$, then there exists $(Y, \tau)
\in \sS{F}^t_n$ such that the diagram commutes
\begin{center}
\begin{tikzpicture}[node distance=2cm]
\node (F) {$\sS{F}^t$};
\node[left of=F] (dY) {$\partial Y$};
\node[left of=dY] (dn) {$\partial \Delta^{n}$};
\node[below of=dY] (Y) {$Y$};
\node[below of=dn] (n) {$\Delta^{n}$};
\draw[->] (dn) to node[below] {$\cong$} (dY);
\draw[->] (n) to node[above] {$\cong$} (Y);
\draw[->] (dY) to node[below] {$\sigma$} (F);
\draw[->] (Y) to node[below] {$\tau$} (F);
\draw[->] (dY) to node[left] {$\iota$} (Y);
\draw[->] (dn) to node[left] {$\iota$} (n);
\end{tikzpicture}
\end{center}
\label{thm:cocycle0}
\end{lemma}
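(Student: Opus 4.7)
The plan is to reduce Lemma~\ref{thm:cocycle0} directly to the earlier lemma about basepoint-preserving simplicial maps $f:\partial\Delta^{n}\to\sS{X}'$ into a fibrant data subcomplex, using the fact that $\sS{F}^t$ is fibrant (by the theorem preceding this section) and applying the definition of $\xi^t_\sigma$ essentially verbatim. In effect, Lemma~\ref{thm:cocycle0} is the filtered-level analogue of the Corollary immediately following that earlier lemma.

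First, I would unpack the setup. The attribute list $Y\in\sS{A}_n$ may be identified with the standard simplex $\Delta^n$ via the isomorphism in the bottom row of the commutative diagram; under this identification, $\partial Y$ is $\partial\Delta^{n}$. By the hypothesis that $d_i Y \in \sS{B}_{n-1}$ for all faces, the section $\sigma:\sS{B}_{n-1}\to\sS{S}_{n-1}$ restricts to a well-defined basepoint-preserving simplicial map
\[
f \;\defeq\; \sigma\circ\iota\colon \partial\Delta^{n}\xrightarrow{\ \cong\ }\partial Y \xrightarrow{\ \sigma\ }\sS{S}_{n-1}\xhookrightarrow{\ \ }\sS{F}^t.
\]
Because $\sS{S}\subseteq\sS{F}^t$ and $\sS{F}^t$ is fibrant for every $t\in[0,\infty]$, $f$ represents an element $\alpha(f)\in\pi_{n-1}(\sS{F}^t,(T_0,\tau_0))$ via the earlier lemma.

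Second, I would check that $\alpha(f)=\xi^t_\sigma(Y)$. This is essentially immediate from Definition~\ref{def:cocycle}: the cocycle evaluated on $Y$ is defined to be the homotopy class of the loop corresponding to the cycle $\sigma(\partial Y)\in C_{n-1}(\sS{S})$, and this is precisely the class $\alpha(f)$ read off from the map $f$. The only bookkeeping required is that the basepoint $(T_0,\tau_0)$ used in Definition~\ref{def:cocycle} is the same one used to trivialize the loop, which is arranged by path-connectedness (via spanning-tree localization) in the manner already invoked in the proof of the earlier lemma.

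Third, applying the earlier lemma to $f$, the hypothesis $\xi^t_\sigma(Y)=e$ becomes $\alpha(f)=e$, and we obtain an extension $f^+\colon\Delta^{n}\to\sS{F}^t$. Translating back through the identification $\Delta^n\cong Y$, the top simplex of the image is a data table $(Y,\tau)\in\sS{F}^t_n$ whose restriction to $\partial Y$ agrees with $\sigma\circ\iota$, giving the commutative square. The main obstacle, such as it is, is purely bookkeeping: verifying that the representative loop in Definition~\ref{def:cocycle} genuinely agrees with the class $\alpha(f)$ up to the choice of spanning-tree basepoint, and that the natural identifications between attribute lists, simplices, and their boundaries are respected by $\sigma$. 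No new analytic input on measures is required at this stage, because the fibrancy of $\sS{F}^t$ already encodes the measure-theoretic content through the strong join condition.
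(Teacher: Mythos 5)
Your proposal is correct and matches the paper's intended argument: the paper states this lemma without an explicit proof, but flags it as a restatement of the corollary at the end of Section~\ref{sec:homotopy}, whose proof is exactly your reduction --- apply the extension lemma for basepoint-preserving maps $\partial\Delta^n \to \sS{X}'$ with $\sS{X}' = \sS{F}^t$ (fibrant by the filtration theorem) to $f = \sigma\circ\iota$, identifying $\alpha(f)$ with $\xi^t_\sigma(Y)$ via Definition~\ref{def:cocycle}. No gaps; the bookkeeping points you raise (basepoint via spanning tree, identification of $Y$ with $\Delta^n$) are the same ones the paper handles in the earlier lemma's proof.
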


\begin{lemma}
The cochain $\xi^t_\sigma$ is a cocycle.
So, it defines a cohomology class $[\xi^t_\sigma] \in H^{n}(\sS{A}, R; \pi_{n-1}(\sS{F}^t, (T_0, \tau_0)))$.
\end{lemma}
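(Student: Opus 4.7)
The plan is to verify the cocycle condition $\delta \xi^t_\sigma = 0$ by direct evaluation on each $(n{+}1)$-simplex, then extend by $R$-linearity, and finally invoke the usual definition to get the cohomology class. Fix $Z \in \sS{A}_{n+1}$. The coboundary formula gives
\[\delta\xi^t_\sigma(Z) \;=\; \xi^t_\sigma(\partial Z) \;=\; \sum_{i=0}^{n+1} (-1)^i\,\xi^t_\sigma(d_iZ).\]
Each $d_i Z$ lies in $\sS{A}_n$, and the hypothesis on the dimension ensures that all its $(n{-}1)$-faces lie in $\sS{B}_{n-1}$, so each $\xi^t_\sigma(d_iZ)$ is represented (per Definition~\ref{def:cocycle}) by the $(n{-}1)$-cycle $\sigma(\partial d_iZ) \in C_{n-1}(\sS{S})$.

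Next, collapse the representing chains via $\partial^{2}=0$. Since $\sigma$ is a simplicial section, it commutes with face maps, hence with the simplicial boundary. Applying the previous lemma that $\partial\circ p = p\circ\partial$ in the form relevant to $\sS{S}$, and the fact that $\partial^2 = 0$ on $C_\bullet(\sS{A})$,
\[\sum_{i=0}^{n+1}(-1)^i\,\sigma(\partial d_iZ)
\;=\;\sigma\!\left(\partial\partial Z\right)\;=\;\sigma(0)\;=\;0\ \in\ C_{n-1}(\sS{S}).\]
So the signed sum of the representing cycles is the zero chain. The final step is to transport this chain-level cancellation into $\pi_{n-1}(\sS{F}^t,(T_0,\tau_0))$. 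Using the $\Zb$-coefficient convention indicated by the footnote of Definition~\ref{def:cocycle} and the fibrancy of $\sS{F}^t$ (Lemma~\ref{lem:F0}), the assignment ``$\Zb$-cycle $\mapsto$ based class in $\pi_{n-1}(\sS{F}^t)$'' is built as in \cite{Kan1958}: choose a spanning tree of the 1-skeleton of $\sS{F}^t$ rooted at $(T_0,\tau_0)$ to rigidify the translation of cycles into simplicial spheres, so that additive combinations of cycles become wedge sums of sphere maps. This construction is additive on signed sums of cycles and sends the zero chain to the trivial class $e$. Applying it to $\sigma(\partial\partial Z) = 0$ yields $\delta\xi^t_\sigma(Z)=e$. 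Extending by $R$-linearity over $C_{n+1}(\sS{A},R)$ gives $\delta\xi^t_\sigma=0$, and then $[\xi^t_\sigma] \in H^n(\sS{A},R;\pi_{n-1}(\sS{F}^t,(T_0,\tau_0)))$ is defined by the usual cocycle/coboundary recipe.

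The principal obstacle is the last step: ensuring that the cycle-to-homotopy-class correspondence is both well-defined on individual cycles (this is where the $\Zb$-coefficient footnote is essential, since it sidesteps orientation ambiguities) and additive on signed sums (so that the zero chain goes to the trivial class). Everything else is purely formal bookkeeping with the simplicial identities $\partial^2=0$ and $\sigma\partial=\partial\sigma$. One could alternatively try to argue geometrically by extending $\sigma$ over $Z$ itself inside the ambient fibrant $\sS{X}$ (using Theorem~\ref{thm:Xfibrant}) and invoking the resulting filling as an explicit nullhomotopy of $\sum_i(-1)^i\sigma(\partial d_iZ)$, but the direct algebraic route above already yields the conclusion once the additivity of the representing construction is recorded.
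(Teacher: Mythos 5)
Your argument is correct and is essentially the paper's own proof: both reduce $\delta\xi^t_\sigma(Z)=\xi^t_\sigma(\partial Z)$ to the observation that the representing cycle is $\sigma(\partial\partial Z)=0$, hence the trivial homotopy class. You simply spell out the additivity of the cycle-to-class assignment and the role of the $\Zb$ convention, details the paper leaves implicit.
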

\begin{proof}
For any $X \in\sS{A}_{n+1}$,  we have $\delta \xi^t_\sigma(X) = \xi^t_\sigma(\partial
X)$, but then the trivial cycle $0 = \partial (\partial X)  \in C_{n}(\sS{A})$
represents the trivial class $e \in \pi_{n-1}(\sS{F}^0)$.
\end{proof}

\begin{rmk}\label{rmk:meaning}
Obstructions in dimension $n-1=1$ detect loops in $\sS{F}^t$, which will prevent
some $n+1=3$ data tables from being mutually joinable.

Obstructions in dimension $n-1=2$ detect spheres in $\sS{F}^t$, which will prevent
some $n+1=4$ data tables from being mutually joinable.

Obstructions in dimension $n-1=0$ detect non-path-connectedness of $\sS{F}^t$, which would
prevent some $n+1=2$ data tables from being joinable (but this is impossible with our definitions including trivial joins).
\end{rmk}

The next theorem is an adaptation of Theorem 34.6 and Corollary 34.7 in
\cite{Steenrod1943}, which is summarized in Theorem 4.5 of \cite{Lundell1960}.
It relies on defining a \emph{difference cochain} that compares a homology
class of sections.

\begin{thm}
Fix a data section $\sigma:\sS{B}_{n-1} \to \sS{S}_{n-1}$.
Suppose $\xi^t_\sigma = \delta \eta$ for some $\eta \in C^{n-1}(\sS{A};
\pi_{n-1}(\sS{F}^t))$.   Then there exists a data section
$\tau:\sS{A}_{n} \to \sS{F}^t_n$ such that $\tau|_{n-2} = \sigma|_{n-2}$. 
The converse holds as well.
\label{thm:class0}
\end{thm}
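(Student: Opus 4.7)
The plan is to mimic the classical Steenrod argument, where the bridge between two sections is a \emph{difference cochain}. First I would establish, for any two data sections $\sigma, \sigma' : \sS{B}_{n-1} \to \sS{F}^t_{n-1}$ that agree on the $(n-2)$-skeleton, a well-defined difference cochain $d(\sigma,\sigma') \in C^{n-1}(\sS{A};\pi_{n-1}(\sS{F}^t))$. Each $(n-1)$-simplex $Z \in \sS{A}_{n-1}$ supports two $(n-1)$-simplices $\sigma(Z), \sigma'(Z) \in \sS{F}^t_{n-1}$ with $\partial \sigma(Z) = \partial \sigma'(Z) = \sigma(\partial Z)$, so gluing them along their common boundary yields an $(n-1)$-sphere in $\sS{F}^t$ whose homotopy class, transported to the basepoint along a chosen spanning structure on $\sS{B}$ (available by the path-connectedness already used for $\xi^t_\sigma$), gives a well-defined element of $\pi_{n-1}(\sS{F}^t)$.

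Next I would verify the fundamental identity
\begin{equation}
\xi^t_\sigma - \xi^t_{\sigma'} = \delta\, d(\sigma,\sigma') \quad \in\ C^n(\sS{A};\pi_{n-1}(\sS{F}^t)).
\end{equation}
The argument is local: for $Y \in \sS{A}_n$, the two loops $\sigma(\partial Y)$ and $\sigma'(\partial Y)$ bound a sphere that decomposes as the signed union of the $n$ difference spheres on the faces $d_iY$, in strict parallel with the coboundary formula $\delta d(\sigma,\sigma')(Y) = \sum_i (-1)^i d(\sigma,\sigma')(d_i Y)$. The verification uses the simplicial identities from Theorem~\ref{thm:sset} together with the group operation in $\pi_{n-1}(\sS{F}^t)$, which is well-defined because each $\sS{F}^t$ is fibrant.

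The main step, and the one I expect to be the principal obstacle, is the \emph{realization lemma}: for every $\eta \in C^{n-1}(\sS{A};\pi_{n-1}(\sS{F}^t))$ there exists a data section $\sigma'$ with $\sigma'|_{n-2} = \sigma|_{n-2}$ and $d(\sigma,\sigma') = \eta$. For each $(n-1)$-simplex $Z$ I must produce $\sigma'(Z) \in \sS{F}^t_{n-1}$ whose boundary matches $\sigma(\partial Z)$ and whose difference sphere against $\sigma(Z)$ realizes the prescribed class $\eta(Z)$. Existence on a single $Z$ is exactly the Kan extension problem for a horn together with a prescribed homotopy class, which is solvable because $\sS{F}^t$ is fibrant (Theorem~\ref{thm:ft}(1) plus Theorem~\ref{thm:Xfibrant} applied levelwise). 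The subtlety is the measure-theoretic book-keeping: one must build $\sigma'(Z)$ as a genuine data table, reusing the dyadic bifurcation/horn-filling machinery from the proof of Theorem~\ref{thm:ft}(1) so that the marginalizations match, and then check that the choices made on distinct $(n-1)$-simplices are mutually compatible on their shared $(n-2)$-faces (which they automatically are, since $\sigma'|_{n-2}$ is fixed to equal $\sigma|_{n-2}$).

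Granting the three ingredients above, the forward direction is immediate: given $\eta$ with $\delta \eta = \xi^t_\sigma$, use the realization lemma to produce $\sigma'$ with $d(\sigma,\sigma') = \eta$; then $\xi^t_{\sigma'} = \xi^t_\sigma - \delta d(\sigma,\sigma') = 0$, so Lemma~\ref{thm:cocycle0} supplies, for each $Y \in \sS{A}_n$, a filler $\tau(Y) \in \sS{F}^t_n$ with $\partial \tau(Y) = \sigma'(\partial Y)$; assembling these fillers yields the required data section $\tau:\sS{A}_n \to \sS{F}^t_n$, with $\tau|_{n-2} = \sigma'|_{n-2} = \sigma|_{n-2}$. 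For the converse, given such a $\tau$, set $\eta \defeq d(\sigma, \tau|_{\sS{B}_{n-1}})$; the identity of step two gives $\xi^t_\sigma - \xi^t_{\tau|_{n-1}} = \delta \eta$, and $\xi^t_{\tau|_{n-1}} = 0$ because $\tau$ itself extends $\tau|_{n-1}$ to $n$-simplices in $\sS{F}^t$, completing the proof.
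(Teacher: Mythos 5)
Your proposal is correct and follows exactly the route the paper intends: the paper offers no proof of Theorem~\ref{thm:class0}, stating only that it is an adaptation of Steenrod's Theorem~34.6 and Lundell's Theorem~4.5 and that it ``relies on defining a \emph{difference cochain},'' which is precisely the three-step argument (difference cochain, the coboundary identity $\xi^t_\sigma - \xi^t_{\sigma'} = \delta\, d(\sigma,\sigma')$, and the realization lemma) that you reconstruct. You in fact supply more detail than the paper does, and you correctly single out the realization lemma---producing, over the \emph{same} attribute list $Z$, a second filler of $\sigma(\partial Z)$ whose difference sphere represents a prescribed class of $\pi_{n-1}(\sS{F}^t)$---as the one step that genuinely requires the measure-theoretic horn-filling machinery of Theorem~\ref{thm:ft}(1) rather than pure simplicial formalism.
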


Theorem~\ref{thm:tri} restates Theorems~\ref{thm:cocycle0} and \ref{thm:class0} in practical language.
\begin{thm}[Steenrod's Trichotomy]
Fix a data subcomplex $\sS{S}$ of an ambient $\sS{X}$, with Wasserstein filtration $(\sS{F}^t)$.
Exactly one of the following is true.
\begin{enumerate}
\item $\xi^t_\sigma = e$ as a cocycle.  Every $n{-}1$-cycle of $n{+}1$ 
data tables in $\sS{S}$ over a total of $n{+}1$ attributes can be approximately joined to a single data table over those $n{+}1$ attributes, allowing error at-most $t$ in any reduction to the original data.

\item $\xi^t_\sigma \neq e$ as a cocycle, but $[\xi^t_\sigma] = e$ as a cohomology class.  
There is some $(n{-}1)$-cycle of $n{+}1$ data tables $(T_{\hat{0}},\tau_{\hat{0}}), \ldots, (T_{\hat{n}},\tau_{\hat{n}})$ in $\sS{S}$ such that the combined attribute list $T=[a_0,\ldots,a_n]$ does \emph{not} admit an approximate join $(T,\tau)$ with error at-most $t$.  However, if one considers all of the faces of these data tables, then there is an approximate join to $(T,\tau)$ of error at-most $t$.

\item $[\xi^t_\sigma] \neq e$ as a cohomology class. 
There is some $(n{-}1)$-cycle of $n{+}1$ data tables $(T_{\hat{0}},\tau_{\hat{0}}), \ldots, (T_{\hat{n}},\tau_{\hat{n}})$ in $\sS{S}$ such that the combined attribute list $T=[a_0,\ldots,a_n]$ does \emph{not} admit an approximate join $(T,\tau)$ with error at-most $t$, even when omitting attributes from the original data tables.    The only way to produce a single joined table is to increase the error threshold $t$.
\end{enumerate}
\label{thm:tri}
\end{thm}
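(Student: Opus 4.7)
The plan is to observe that the algebraic trichotomy is tautological---a cocycle either vanishes or it does not; and if it does not, its cohomology class either vanishes or it does not---so the real content of the theorem is the translation of each algebraic alternative into a statement about approximate database joins. I would frame the proof as three case analyses, handing each to a lemma already established in the paper, and would spend most of the prose dictionary-translating the conclusions of those lemmas into the data-theoretic language of the theorem's three bullets.

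For case (1), I would apply Lemma~\ref{thm:cocycle0} generator-by-generator. For each $Y\in\sS{A}_n$ the hypothesis $\xi^t_\sigma(Y)=e\in\pi_{n-1}(\sS{F}^t)$ yields a filling $(Y,\tau)\in\sS{F}^t_n$ whose boundary is $\sigma(\partial Y)$. Membership in $\sS{F}^t$ means, by Definition~\ref{defn:approx}, that each attribute of $Y$ is witnessed by a data table in $\sS{S}$ whose marginal sits within Wasserstein distance~$t$. This is exactly the statement that the $(n{-}1)$-cycle of $n{+}1$ data tables composing $\sigma(\partial Y)$ approximately joins into a single data table over all $n{+}1$ attributes with error at most $t$.

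For case (2), the nonvanishing of $\xi^t_\sigma$ as a cocycle means there is some $Y$ on which Lemma~\ref{thm:cocycle0} cannot supply a direct filling in $\sS{F}^t$---establishing the first sentence of the second bullet. The vanishing of $[\xi^t_\sigma]$ then lets me invoke Theorem~\ref{thm:class0}: a cochain $\eta$ with $\delta\eta=\xi^t_\sigma$ produces a data section $\tau:\sS{A}_n\to\sS{F}^t_n$ that agrees with $\sigma$ only on the $(n{-}2)$-skeleton. The crucial translation step is to read the equality $\tau|_{n-2}=\sigma|_{n-2}$ as ``the faces of the original data tables are preserved while the tables themselves are replaced by new ones on $\sS{B}_{n-1}$,'' which is the combinatorial meaning of ``considering all of the faces of these data tables, there is an approximate join with error at most~$t$.'' For case (3), I would argue by contrapositive: if any section $\tau:\sS{A}_n\to\sS{F}^t_n$ agreeing with $\sigma$ on the $(n{-}2)$-skeleton existed, the converse half of Theorem~\ref{thm:class0} would produce $\eta$ with $\delta\eta=\xi^t_\sigma$, contradicting $[\xi^t_\sigma]\ne e$; hence no modification of the data tables at level $n{-}1$ with preserved lower-dimensional faces can resolve the obstruction, and the only remedy is to raise $t$ until the inclusion $\sS{F}^t\hookrightarrow\sS{F}^{t'}$ sends $[\xi^t_\sigma]$ to zero in $\pi_{n-1}(\sS{F}^{t'})$.

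The main obstacle I expect is in case (2): faithfully translating the statement ``$\tau$ agrees with $\sigma$ on the $(n{-}2)$-skeleton'' into the narrative ``retreat one merging level, repair, and re-merge,'' and verifying that the difference-cochain construction from Steenrod/Olum/Lundell---which underlies Theorem~\ref{thm:class0}---really does proceed inside the Wasserstein-filtered fibrant object $\sS{F}^t$ rather than only inside an abstract Kan complex. Because each $\sS{F}^t$ has been established as fibrant (so $\pi_{n-1}(\sS{F}^t)$ exists and the classical loop-concatenation and cone-filling arguments transfer verbatim) and because the face maps preserve Wasserstein distance (Section~\ref{sec:FDS}), the classical construction should go through with minor notational bookkeeping, but the prose connecting this bookkeeping to the database-merging interpretation will be the delicate part of the write-up.
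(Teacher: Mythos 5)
Your proposal is correct and follows essentially the same route as the paper, which presents Theorem~\ref{thm:tri} explicitly as a practical-language restatement of Lemma~\ref{thm:cocycle0} and Theorem~\ref{thm:class0}, with the exhaustive and mutually exclusive case split being tautological. Your additional caution about verifying that the difference-cochain construction lives inside the fibrant $\sS{F}^t$ is well placed but is already absorbed into the paper's statement of Theorem~\ref{thm:class0}, so no further argument is needed here.
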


\begin{defn}[Persistence of Obstruction]
Let $\sS{S} \subseteq \sS{F}^0 \subset \cdots \sS{F}^t \subset \cdots
\subset \sS{F}^\infty = \sS{X}$ be the filtration of a path-connected data complex.
Fix a dimension $n$ such that $d_i Y \in \sS{B}_{n-1}$ for all faces $d_i$ of all
$Y \in  \sS{A}_{n}$.
Let $\sigma:\sS{B}_{n-1} \to \sS{S}_{n-1}$ be a data section.
Fix a basepoint $(T_0,\tau_0) \in \sS{S} \subset \sS{F}^0$.
Define 
\[t_n(\sigma) \defeq \inf\{ t \,:\, \xi^t_\sigma= e \in C^n(\sS{A}; \pi_{n-1}(\sS{F}^t))\}\]
and
\[t'_n(\sigma) \defeq \inf\{ t \,:\, [\xi^t_\sigma] = e \in H^n(\sS{A}; \pi_{n-1}(\sS{F}^t))\}.\]
Note that $t'_n(\sigma) \leq t_n(\sigma)$.
\label{def:pers}
\end{defn}

\begin{rmk}
Consider a data section $\sigma:\sS{B} \to \sS{S}$. A specific value
$t_n(\sigma)=t$ means that $\sigma$ admits an extension into $\sS{F}^t$, but
not for any level of the filtration less than $t$.  In other words, there is no obstruction to extension beyond the mere existence of the data section $\sigma:\sS{B}_{n-1} \to \sS{F}^t_{n-1}$.
Similarly, by Theorem~\ref{thm:tri}, a specific value $t'_n(\sigma) = t$ means that 
there is no obstruction to extension beyond the mere existence of 
the data section $\sigma|_{n-2}:\sS{B}_{n-2} \to \sS{F}^t_{n-2}$.
\end{rmk}

\begin{rmk}
When obstructions are resolved, there are typically many
solutions to Problems \ref{GPmerge}/\ref{GPmetamerge}.  That is, if any hypothesis is consistent in \ref{GPtest},
then there are typically many other hypotheses that are consistent as well. Typical methods for choosing among them often involve
posing and then solving some optimization problem. 
We might propose enriching those optimization problems via inclusion of a measure of global inconsistency.
More precisely, the cost of a proposed data section $\sigma$ might be some combination of a local cost and some decreasing function of $t_n(\sigma)$ or $t'_n(\sigma)$;
in other words, one might penalize proposed local mergers based on the degree of difficulty they cause in forming global consensus with other local mergers.
\end{rmk}

\section{Discussion}
\label{sec:Disc}

This paper provides a mathematical foundation for semi-automated data-table-alignment tools that
are common in commercial database software. 
Data tables are abstracted as measures over value spaces, and the problem of merging tables, or indeed merging previously-merged tables, is recast as the search for a measure
that marginalizes correctly. This abstraction, and the simplicial set structure built with it, permits several advances over the current state of the art in database engineering.
Ongoing and future work will focus on developing clear algorithms for application of persistent obstruction theory to real-world database engineering and related problems in data science. 

We conclude this paper with several brief remarks about further work and also some practicalities for future use of this theory:

\begin{itemize}
\item A data sample $X$ in any metric space $V$ provides a measure, by counting.  The measure is $\mu(U) = \# (U \cap X)$ or normalized as $\mu(U) = \frac{\#(U \cap X)}{\#X}$ for any $U \in 2^V$.  

\item For computational purposes, most infinite metric spaces can be considered as compact or finite spaces, using bounds or bins or kernel methods or distributional coordinates that are appropriate to the problem at hand.

\item On the compact metric spaces $\Val(T)$, measures of interest can be described as density functions via a Radon--Nikodym comparison to the uniform probability measure $\kappa_T$.

\item One attribute can represent models on other attributes, providing an interpretation of Bayesian inference and an opportunity to apply persistent obstruction theory to compact parameterized model spaces.  In machine learning, one could use this framework to describe the compatibility of solutions in ensemble methods.

\item Any list of attributes can be considered as a single attribute, because it is still provides measures over some metric space.  There is no requirement that attribute value spaces are ``minimal'' or ``1-dimensional'' in any sense.

\item Filtrations other than $L^\infty$-Wasserstein might work, too, but someone has to prove that all levels of the filtration are fibrant.

\item To study a complex of approximate joins, $\sS{F}^t$, one must compute Wasserstein distances as in Definition~\ref{def:wasserstein}. This can be done efficiently using the tools of optimal transport as in \cite{Peyre2018}. 

\item To apply Theorem~\ref{thm:tri}, one must compute $\xi^t_\sigma$ in the simplicial homotopy group $\pi_{n-1}(\sS{F}^t)$.
This is definitely the greatest challenge for realizing these mathematical advances as actual software, because 
homotopy groups are notoriously difficult to compute in general.
The task is simplified in our case by several factors.
First, we do not necessarily need to know the group structure of $\pi_{n-1}(\sS{F}^t)$ to know whether a particular element $\xi^t_\sigma$ is trivial in that group.
Second, a data subcomplex $p:\sS{S} \to \sS{B}$ is always finitely generated with $\sS{B}$ finite, and that finite number is small (several, not several trillion) in most use-cases.
Third, because any list of attributes can be considered as a single attribute, problems that are \emph{a priori} high-dimensional can be studied with a smaller list of formal attributes.
Fourth, we expect the homotopy $\pi_{n-1}(\sS{F}^t)$ to simplify as $t$ increases, so for practical purposes it may be easy to bound $t_n(\sigma)$ even if each $\pi_{n-1}(\sS{F}^t)$ is difficult to compute.
We expect (or hope) that $\pi_1$ and $\pi_2$ are often sufficient for practical problems.

\item The most important conclusions of this work are: \emph{Any manual or
automatic data-merging system must analyze homotopy in order to guarantee success;} and \emph{Obstructions can only be resolved two ways---backing up one step, or allowing additional leeway in the data comparison.}
\end{itemize}

\newpage

\appendix
\section{Categorical Definitions}
\label{sec:sset}
This appendix provides a rapid summary of a categorical interpretation of
the development in Section~\ref{sec:db}.
For more on these topics, and for the notion of homotopy for fibrant objects in model
categories, see \cite{Kan1958, May1967, Quillen1967, Seriesa, Friedman2008}.
The reader is warned that each of these references uses a slightly different
convention for ordering, opposite categories, and co-/contra-variant functors.

\subsection{Simplex}
Let $\mathbf{Set}$ denote the set category, whose objects are sets and whose
morphisms are functions.

Let $\mathbf{\Delta}$ denote the simplex category, whose objects are the
\emph{nonempty} sets of natural numbers with the standard ordering $\leq$, written $\mathbf{n} \defeq
\{0,1,\cdots,n\}$, and whose morphisms are order-preserving functions.  
Let $\mathbf{\Delta_a}$ denote the augmented simplex category, whose objects
are sets of natural numbers with the standard ordering,
and whose morphisms are order-preserving functions.  The augmented simplicial
category includes the empty set, denoted $\mathbf{-1}$ or $\emptyset$, which is
the initial object in the category.  So,
$\mathbf{\Delta_a} = \mathbf{\Delta} \cup \{\emptyset\}$.
A monomorphism in $\mathbf{\Delta_a}$ is a one-to-one order-preserving
function.  The only bimorphisms/isomorphisms in $\mathbf{\Delta_a}$ are the
identity maps.
Among the morphisms in $\mathbf{\Delta}$ and $\mathbf{\Delta_a}$ are the co-faces $d^i$ and
co-degeneracies $s^i$, defined as follows.
\[
\begin{split}
d^i&:\mathbf{n-1} \to \mathbf{n}~\text{by}~\\
d^i&:(0,\ldots,i-1,i,i+1,\ldots,n-1) \mapsto
     (0,\ldots,i-1,i+1,i+2,\ldots,n),~\text{respectively.}\\
s^i&:\mathbf{n+1} \to \mathbf{n}~\text{by}~\\
s^i&:(0,\ldots,i-1,i,i+1,\ldots,n+1) \mapsto 
          (0,\ldots,i-1,i,i,\ldots,n),~\text{respectively.}
\end{split}
\]
These morphisms satisfy the conditions
\begin{enumerate}
\item $d^j \circ d^i = d^i \circ d^{j-1}$, if $i<j$;
\item $s^j \circ d^i = d^i \circ s^{j-1}$, if $i<j$;
\item $s^j \circ d^j = d^{j+1} \circ s^j = \mathrm{id}$;
\item $s^j \circ d^i = d^{i-1} \circ s^j$, if $i > j+1$; and 
\item $s^j \circ s^i = s^i \circ s^{j+1}$, if $i \leq j$.
\end{enumerate}
Every non-identity morphism in $\mathbf{\Delta}$ or $\mathbf{\Delta_a}$ can be written a
finite composition of co-face and co-degeneracy morphisms, so these five
properties essentially characterize $\mathbf{\Delta}$ and $\mathbf{\Delta_a}$.

For our applications, the following lemmas about monomorphisms in $\mathbf{\Delta_a}$ are very useful. 
They are elementary, but do not appear in the standard references in
this form.  Merged indexing is merely an ordered formulation of the
inclusion--exclusion principle.

\begin{lemma}[Complimentary Monomorphism]
For any monomorphism $\iota:\mathbf{n'} \to \mathbf{n}$ in $\mathbf{\Delta_a}$, 
write $m = n- n' -1$.  There is a monomorphism $\iota^c:\mathbf{m} \to
\mathbf{n}$ in $\mathbf{\Delta_a}$ that enumerates the entries of $\mathbf{n}$
that are not in the image of $\iota$.  
\label{lem:compliment}
\end{lemma}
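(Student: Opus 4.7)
The proof is essentially a bookkeeping exercise in the augmented simplex category, so the plan is to construct $\iota^c$ explicitly and then verify it has the required properties. I will write the construction in three short steps, and the difficulty is almost entirely in setting up notation carefully so that the order-preserving structure is manifest.

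First, I would form the set-theoretic complement $C \defeq \mathbf{n} \setminus \iota(\mathbf{n'}) \subseteq \mathbf{n}$. Since $\iota$ is a monomorphism in $\mathbf{\Delta_a}$, it is an order-preserving injection, and $\lvert \iota(\mathbf{n'})\rvert = n'+1$. Because $\lvert \mathbf{n}\rvert = n+1$, the complement satisfies $\lvert C\rvert = (n+1)-(n'+1) = n-n' = m+1$, matching $\lvert \mathbf{m}\rvert$ as required. Inheriting the standard order from $\mathbf{n}$, the set $C$ becomes a finite totally ordered set with $m+1$ elements.

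Second, I would define $\iota^c:\mathbf{m}\to\mathbf{n}$ as the composition of the unique order-preserving bijection $\mathbf{m}\to C$ (list the elements of $C$ in increasing order and assign them to $0,1,\ldots,m$) with the inclusion $C\hookrightarrow\mathbf{n}$. This composite is one-to-one and order-preserving, so it is a monomorphism in $\mathbf{\Delta_a}$. By construction, its image is exactly $C$, i.e., exactly the entries of $\mathbf{n}$ that are not in the image of $\iota$. Uniqueness up to the natural identification follows because there is only one order-preserving bijection between two finite totally ordered sets of the same cardinality.

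Finally, I would note the two edge cases. If $\iota$ is the identity on $\mathbf{n}$, then $C=\emptyset$ and $m = -1$, so $\iota^c$ is the unique morphism $\emptyset\to\mathbf{n}$ in $\mathbf{\Delta_a}$ (this is the primary reason augmentation is convenient here). Dually, if $\iota$ is the unique morphism $\emptyset\to\mathbf{n}$, then $C=\mathbf{n}$ and $\iota^c$ is the identity. The only real subtlety in the argument is making sure the indexing conventions line up (hence the $-1$ in $m = n-n'-1$), and that both $\mathbf{m}$ and the empty set are legitimate objects of $\mathbf{\Delta_a}$; no nontrivial simplicial machinery is invoked.
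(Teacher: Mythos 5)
Your proof is correct, and it matches the intended argument: the paper states this lemma without proof (calling it elementary), and the enumeration of the ordered complement $C=\mathbf{n}\setminus\iota(\mathbf{n'})$ via the unique order-preserving bijection from $\mathbf{m}$, together with the cardinality count $\lvert C\rvert = n-n' = m+1$, is exactly the construction the authors rely on (e.g.\ in Lemma~\ref{lem:quotient} and Corollary~\ref{cor:include}). Your treatment of the augmented edge cases ($m=-1$ when $\iota$ is the identity, and $\iota^c=\mathrm{id}$ when $\iota$ is the map from $\emptyset$) is a welcome detail the paper leaves implicit.
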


\begin{lemma}[Merged Indexing]
In the category $\mathbf{\Delta_a}$, suppose $\mathbf{n_{0}}, \mathbf{n_{01}},
\mathbf{n_{02}}$ are 
equipped with monomorphisms 
$\iota_{01}:\mathbf{n_{0}} \into \mathbf{n_{01}}$ and 
$\iota_{02}:\mathbf{n_{0}} \into \mathbf{n_{02}}$.
Then, for $n_{012} = n_{01} + n_{02} - n_{0}$, there are monomorphisms 
$\mu_{01}:\mathbf{n_{01}} \into \mathbf{n_{012}}$ and 
$\mu_{02}:\mathbf{n_{02}} \into \mathbf{n_{012}}$ such that
$\iota_{0} \defeq \mu_{01}\circ\iota_{01} = \mu_{02}\circ
\iota_{02}:\mathbf{n_{0}}\to \mathbf{n}$ is well-defined.  Moreover, the
complementary monomorphisms $\iota_{01}^c$ and $\iota_{02}^c$ provide monomorphisms
$\iota_1$ and $\iota_2$, as in
Diagram~\eqref{eq:split}.  The images of $\iota_0, \iota_1, \iota_2$ are
disjoint.
\begin{equation}
\begin{tikzpicture}[node distance=2.5cm]
\node (k0) {$\mathbf{n_{0}}$};
\node[left of=k0] (k1) {$\mathbf{n_{1}}$};
\node[right of=k0] (k2) {$\mathbf{n_{2}}$};
\node[below left of=k0] (k01) {$\mathbf{n_{01}}$};
\node[below right of=k0] (k02) {$\mathbf{n_{02}}$};
\node[below right of=k01] (k012) {$\mathbf{n_{012}}$};
\draw[->] (k0) to node[above] {$\iota_{01}$} (k01); 
\draw[->] (k0) to node[above] {$\iota_{02}$} (k02); 
\draw[->] (k1) to node[right] {$\iota_{01}^c$} (k01); 
\draw[->] (k2) to node[left] {$\iota_{02}^c$} (k02); 
\draw[dashed, ->] (k01) to node[above] {$\mu_{01}$} (k012); 
\draw[dashed, ->] (k02) to node[above] {$\mu_{02}$} (k012); 
\draw[dashed, ->] (k0) to node[left] {$\iota_{0}$} (k012); 
\draw[dashed, ->, bend right=45] (k1) to node[left] {$\iota_{1}$} (k012); 
\draw[dashed, ->, bend left=45] (k2) to node[right] {$\iota_{2}$} (k012); 
\end{tikzpicture}
\label{eq:split}
\end{equation}
\label{lem:mergeind}
\end{lemma}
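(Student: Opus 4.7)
The plan is to build $\mathbf{n_{012}}$ as the pushout of the span $\mathbf{n_{01}} \xleftarrow{\iota_{01}} \mathbf{n_0} \xrightarrow{\iota_{02}} \mathbf{n_{02}}$ in $\mathbf{Set}$, and then endow the underlying set with a total order that both canonical maps preserve, promoting the pushout from $\mathbf{Set}$ to $\mathbf{\Delta_a}$.

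First I would observe that since $\iota_{01}$ and $\iota_{02}$ are injective, the set-theoretic pushout $P$ has cardinality $(n_{01}+1)+(n_{02}+1)-(n_0+1) = n_{012}+1$, and the two canonical maps $\mu_{01}: \mathbf{n_{01}} \to P$ and $\mu_{02}: \mathbf{n_{02}} \to P$ are injective. The joint image $\mu_{01}\iota_{01}(\mathbf{n_0}) = \mu_{02}\iota_{02}(\mathbf{n_0})$ is the shared subset, so $P$ decomposes as a disjoint union of this shared subset together with $\mu_{01}\iota_{01}^c(\mathbf{n_1})$ and $\mu_{02}\iota_{02}^c(\mathbf{n_2})$, where $\iota_{01}^c, \iota_{02}^c$ come from Lemma~\ref{lem:compliment}. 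Defining $\iota_0 \defeq \mu_{01}\iota_{01} = \mu_{02}\iota_{02}$, $\iota_1 \defeq \mu_{01}\iota_{01}^c$, and $\iota_2 \defeq \mu_{02}\iota_{02}^c$ already gives the required disjointness of images.

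Next I would impose a compatible total order on $P$. The $n_0+1$ shared points partition each of $\mathbf{n_{01}}$ and $\mathbf{n_{02}}$ into $n_0+2$ ordered ``gap blocks'' (the entries strictly between consecutive image points of $\iota_0$, together with leading and trailing blocks). For each gap, concatenate the $\mathbf{n_{01}}$-entries (in their native order) followed by the $\mathbf{n_{02}}$-entries (in their native order), and place this concatenated run in $P$ between the corresponding shared endpoints. Declare the resulting sequence to be an order-isomorphism $P \cong \mathbf{n_{012}}$. By construction, both $\mu_{01}$ and $\mu_{02}$ are order-preserving injections, hence monomorphisms in $\mathbf{\Delta_a}$; the commutativity $\mu_{01}\iota_{01} = \mu_{02}\iota_{02}$ and the disjointness in the previous paragraph finish Diagram~\eqref{eq:split}.

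The main obstacle is really just bookkeeping: one must verify that $\mu_{01}$ and $\mu_{02}$ are order-preserving simultaneously. This reduces to a case analysis in which one compares two elements of $\mathbf{n_{01}}$ (or of $\mathbf{n_{02}}$) according to whether they lie in the same gap, in different gaps, or at a gap endpoint; in each case the concatenation rule preserves their relative order because the rule never re-interleaves entries from the same source list. The asymmetric choice of placing $\mathbf{n_{01}}$-entries before $\mathbf{n_{02}}$-entries within each gap is one of many valid interleavings, which matches the remark following Definition~\ref{defn:merge} that any consistent interleaving yields an equally good merge.
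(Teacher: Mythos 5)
Your proposal is correct and is essentially the paper's own argument: the paper also counts $|\mathbf{n_{012}}|$ by inclusion--exclusion and then constructs $\mu_{01},\mu_{02}$ by an explicit merge algorithm that splices, between consecutive aligned (shared) entries, the $\mathbf{n_{01}}$-entries followed by the $\mathbf{n_{02}}$-entries of that gap---exactly your gap-block concatenation. The pushout packaging is a harmless reformulation of the same underlying set, and your closing remark about the arbitrariness of the interleaving matches the paper's own comment after Definition~\ref{defn:merge}.
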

\begin{proof}
The sets $\mathbf{n_{01}}, \mathbf{n_{02}}, \mathbf{n_{0}}$  
have sizes $n_0{+}1, n_{01}{+}1, n_{02}{+}1$ respectively.
Then, $n_{012} \defeq n_{01} + n_{02} - n_{0}$ satisfies $n_{012}+1 \defeq
(n_{01}+1) + (n_{02}+1) - (n_{0} + 1)$ and defines the object
$\mathbf{n_{012}}=\{0,\ldots, n_{012}\}$ in $\mathbf{\Delta_a}$.

Monomorphisms $\mu_{01}$ and $\mu_{02}$ can be constructed via the
algorithm in Figure~\ref{mergeind}, which is merely a sequence of
concatenations spliced between aligned 
entries of $\iota_{01}$ and $\iota_{02}$.    The resulting maps are indeed
morphisms, as they are guaranteed to be order-preserving.
\end{proof}

\begin{figure}[b]
\lstinputlisting[language=Python,frame=single]{mergeind.py}
\label{mergeind}
\caption{Merged indexing algorithm.}
\end{figure}

\begin{ex}
Consider $n_0=1$ and $n_{01} = 5$ and $n_{02} = 4$.  Then $n_{012} = 8$.
Let 
$\iota_{01}:\mathbf{1} \mapsto \mathbf{5}$ be the monomorphism that is
written as the sequence $[1,4]$.
Let 
$\iota_{02}:\mathbf{1} \mapsto \mathbf{5}$ be the monomorphism that is 
written as the sequence $[1,3]$.
Visually, the merged indexing means
\[
\begin{split}
\iota_{01}:\{0,1\} &\mapsto
\{0,\phantom{0,}\underline{1},2,3,\phantom{2,}\underline{4},5\phantom{,4}\}\\
\iota_{02}:\{0,1\} &\mapsto \{\phantom{0,}
0,\underline{1},\phantom{2,3,}2,\underline{3},\phantom{5,}4\}\\
&\text{yields}\\
\mu_{01}:\{0,1,2,3,4,5\} &\mapsto
\{\underline{0},1,\underline{2},\underline{3},\underline{4},5,\underline{6},\underline{7},8\}\\
\mu_{02}:\{0,1,2,3,4\}   &\mapsto
\{0,\underline{1},\underline{2},3,4,\underline{5},\underline{6},7,\underline{8}\}\\
&\text{so}\\
\iota_{0}:\{0,1\} &\mapsto \{0,1,\underline{2},3,4,5,\underline{6},7,8\}\\
\iota_{1}:\{0,1,2,3\} &\mapsto
\{\underline{0},1,2,\underline{3},\underline{4},5,6,\underline{7},8\}\\
\iota_{2}:\{0,1,2\}   &\mapsto
\{0,\underline{1},2,3,4,\underline{5},6,7,\underline{8}\}\\
\end{split}
\]
For abbreviation and programming, the constructed monomorphisms can be written as lists.
\[
\begin{split}
\mu_{01} &=  [0, 2, 3, 4, 6, 7]\\
\mu_{02} &= [1, 2, 5, 6, 8]\\
\iota_0 &= [2,6]\\
\iota_1 &= [0,3,4,7]\\
\iota_2 &= [1,5,8]\\
\end{split}
\]
\label{ex:ind}
\end{ex}

\subsection{Simplicial Sets}
For any category $\mathbf{C}$, the ``simplicial category over $\mathbf{C}$'' is
$\mathbf{sC}$. An object in $\mathbf{sC}$ is a contravariant functor 
$X:\mathbf{\Delta}\to \mathbf{C}$.  That is, an object in $\mathbf{sC}$ is an assignment of:
\begin{itemize}
\item for each object $\mathbf{n}$ in $\mathbf{\Delta}$, an object $X_n$ in $\mathbf{C}$;
\item for each morphism (order-preserving function) $\mu:\mathbf{n'} \to \mathbf{n}$ in
$\mathbf{\Delta}$, a morphism $X(\mu):X_{n} \to X_{n'}$ in $\mathbf{C}$.
\end{itemize}
The augmented simplicial category, $\mathbf{asC}$, allows 
a terminal object in $\mathbf{C}$ to correspond to the initial
object $\mathbf{-1} \in \mathbf{\Delta_a}$. That is, the trivial
map $\mathbf{-1}\to \mathbf{n}$ yields a corresponding map $X_n \to X_{-1}$, if
the category $\mathbf{C}$ happens to admit a terminal object.

The morphisms $X \to Y$ in $\mathbf{sC}$ or $\mathbf{asC}$ are the natural
transformations as in \eqref{eqn:natsS}.
\begin{equation}
\begin{tikzpicture}[node distance=2cm]
\node (X) {$X$};
\node[below of=X] (Y) {$Y$};
\node[right of=X] (Xnp) {$X_{n'}$};
\node[right of=Y] (Ynp) {$Y_{n'}$};
\node[right of=Xnp] (Xn) {$X_n$};
\node[right of=Ynp] (Yn) {$Y_n$};
\node (np) [below =0.5cm of Ynp] {$\mathbf{n'}$};
\node (n) [below =0.5cm of Yn] {$\mathbf{n}$};
\draw[->] (X) to node[right] {} (Y);
\draw[->] (Xn) to node[above] {$X(\mu)$} (Xnp);
\draw[->] (Yn) to node[above] {$Y(\mu)$} (Ynp);
\draw[->] (np) to node[above] {$\mu$} (n);
\draw[->] (Xnp) to node[left] {} (Ynp);
\draw[->] (Xn) to node[left] {} (Yn);
\end{tikzpicture}
\label{eqn:natsS}\end{equation}

The most important case is $\mathbf{sSet}$, the category of simplicial sets, which is
augmented to $\mathbf{asSet}$.  The following lemma shows that augmented
simplicial sets are given by face and degeneracy maps.
\begin{lemma}
Any object in $\mathbf{asSet}$ is a set $X$ (called an \emph{augmented
simplicial set}) graded by $-1,0,1,2,\ldots$ and
equipped with morphisms $d_i:X_{n} \to X_{n-1}$ and $s_i:X_{n}
\to X_{n+1}$ for $0\leq i \leq n$ such that
\begin{enumerate}
\item $d_i \circ d_j = d_{j-1} \circ d_i$, if $i<j$;
\item $d_i \circ s_j = s_{j-1}\circ d_i$, if $i<j$;
\item $d_j \circ s_j = d_{j+1} \circ s_j= \mathrm{id}$;
\item $d_i \circ s_j = s_j \circ d_{i-1}$, if $i > j+1$; and 
\item $s_i \circ s_j = s_{j+1}\circ s_i$, if $i \leq j$.
\end{enumerate}
\label{lem:generate}
\end{lemma}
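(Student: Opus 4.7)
The plan is to unwind the definition of an object in $\mathbf{asSet}$ as a contravariant functor $X:\mathbf{\Delta_a} \to \mathbf{Set}$ and translate its content into the combinatorial data of a graded set equipped with face and degeneracy maps. First I would set $X_n \defeq X(\mathbf{n})$ for $n \geq -1$, which supplies the required grading on the underlying set $X = \coprod_n X_n$. The co-face and co-degeneracy morphisms $d^i, s^i$ of $\mathbf{\Delta_a}$ described at the start of the appendix then induce, by contravariance, the face and degeneracy maps $d_i \defeq X(d^i): X_n \to X_{n-1}$ and $s_i \defeq X(s^i): X_n \to X_{n+1}$.

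Next I would verify that the five co-simplicial identities satisfied by $d^i, s^i$ inside $\mathbf{\Delta_a}$ translate into the five simplicial identities (1)--(5) of the lemma after applying the contravariant functor $X$. For example, the identity $d^j \circ d^i = d^i \circ d^{j-1}$ for $i<j$ becomes, after applying $X$ and reversing the order of composition, $X(d^i)\circ X(d^j) = X(d^{j-1})\circ X(d^i)$, which is precisely $d_i \circ d_j = d_{j-1}\circ d_i$. The remaining four identities follow by the same mechanical reversal of composition, so this step is essentially bookkeeping.

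For the converse direction, suppose we are given a graded set $\{X_n\}$ together with maps $d_i, s_i$ satisfying identities (1)--(5); I would build from this data a contravariant functor $X:\mathbf{\Delta_a} \to \mathbf{Set}$ recovering the given structure. The key fact from the combinatorics of $\mathbf{\Delta_a}$ is that every non-identity morphism $\mu:\mathbf{n'}\to\mathbf{n}$ admits a canonical epi--mono factorization into an ordered composition of co-degeneracies $s^{i_1}\cdots s^{i_p}$ followed by co-faces $d^{j_1}\cdots d^{j_q}$ (this is the standard presentation already tacitly used in the text between the listing of the co-simplicial identities and Lemma~\ref{lem:compliment}). I define $X(\mu)$ to be the corresponding composition of $s_i$'s and $d_j$'s in $\mathbf{Set}$. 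Functoriality, i.e.\ $X(\mu\circ\nu) = X(\nu)\circ X(\mu)$, then reduces to showing that any two such factorizations of the same order-preserving map can be interconverted by applications of identities (1)--(5), ensuring $X$ is well-defined on compositions.

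The main obstacle, and the only non-bookkeeping step, is this last well-definedness claim: that the five identities are not merely necessary but \emph{sufficient} to present the morphism monoid of $\mathbf{\Delta_a}$ generated by the co-faces and co-degeneracies. I would cite this as the standard presentation of $\mathbf{\Delta_a}$ rather than prove it from scratch, since it appears in each of \cite{May1967, Seriesa, Friedman2008}; once granted, the equivalence between the functorial and combinatorial descriptions is immediate, and the augmented case is handled by simply allowing $n=-1$ throughout (with the only maps into $\mathbf{-1}$ being identities, consistent with $X_{-1}$ carrying no nontrivial face or degeneracy structure).
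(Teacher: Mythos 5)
Your proposal is correct and follows essentially the same route as the paper's own (much terser) proof, which simply observes that the co-face and co-degeneracy morphisms generate all non-identity morphisms of $\mathbf{\Delta_a}$, so specifying the face and degeneracy maps suffices. You are in fact slightly more careful than the paper in flagging that generation alone is not enough---one also needs that the five identities present the category (the well-definedness of $X(\mu)$ on an epi--mono factorization)---but since you, like the paper, defer this to the standard references, the two arguments are substantively identical.
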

\begin{proof}
The objects are apparent.  As for morphisms, each co-face 
$d^i:\mathbf{n-1}\to\mathbf{n}$ and co-degeneracy $s^i:\mathbf{n+1} \to
\mathbf{n}$ morphism in $\mathbf{\Delta_a}$ must correspond to 
face $d_i:X_n \to X_{n-1}$ and boundary $s_i:X_n \to X_{n+1}$ morphisms in $X$.
Because the co-face and co-degeneracy morphisms generate all non-identity morphisms in
$\mathbf{\Delta_a}$, it is sufficient to specify these face an degeneracy maps.
\end{proof}

\begin{cor}[Simplicial Maps]
The morphisms of $\mathbf{sSet}$ or $\mathbf{asSet}$ (called \emph{simplicial
maps}) from \eqref{eqn:natsS} are set functions $f:X \to Y$ such that $d_i \circ f = f \circ d_i$ and $s_i \circ f = f \circ s_i$.
\end{cor}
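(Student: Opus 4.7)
The plan is to unpack the definition of a natural transformation in the functor category $\mathbf{asSet} = \mathrm{Fun}(\mathbf{\Delta_a}^{\mathrm{op}}, \mathbf{Set})$ and then apply Lemma~\ref{lem:generate} to reduce the naturality check to the generating morphisms. First, I would observe that a morphism $f:X \to Y$ in $\mathbf{asSet}$ is by definition a natural transformation between the contravariant functors $X,Y:\mathbf{\Delta_a} \to \mathbf{Set}$, so it is specified by a family of set maps $f_n:X_n \to Y_n$ (one for each object $\mathbf{n}$ of $\mathbf{\Delta_a}$) such that for every morphism $\mu:\mathbf{n'}\to\mathbf{n}$ of $\mathbf{\Delta_a}$ the square \eqref{eqn:natsS} commutes, i.e.\ $Y(\mu)\circ f_n = f_{n'}\circ X(\mu)$. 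Gluing the components together into a single set map $f:\bigsqcup_n X_n \to \bigsqcup_n Y_n$ that preserves the grading yields the ``set function $f:X\to Y$'' in the statement.

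Next I would invoke the already-established fact that every non-identity morphism in $\mathbf{\Delta_a}$ factors as a finite composition of co-faces $d^i$ and co-degeneracies $s^i$. Because both sides of the naturality square are contravariantly functorial in $\mu$, if naturality holds for each of two composable generators it automatically holds for their composition; and naturality is trivially satisfied for identity morphisms. Consequently the full naturality condition is equivalent to requiring it just on the generators $d^i$ and $s^i$. Under the dictionary of Lemma~\ref{lem:generate}, $X(d^i)=d_i$ and $X(s^i)=s_i$ (and similarly for $Y$), so these two generator conditions read exactly $f\circ d_i = d_i\circ f$ and $f\circ s_i = s_i\circ f$.

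For the converse direction, I would start with a graded set map $f:X\to Y$ satisfying $f\circ d_i = d_i\circ f$ and $f\circ s_i = s_i\circ f$, and simply reverse the reduction: define $f_n\defeq f|_{X_n}$, and extend the generator-level commutation to arbitrary $\mu$ by writing $\mu$ as a composition of co-faces and co-degeneracies and applying functoriality of $X$ and $Y$ together with the hypothesis at each step. This produces the required commuting square \eqref{eqn:natsS} for every $\mu$, hence a natural transformation $X \to Y$.

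There is no real obstacle here; the corollary is essentially a restatement of Lemma~\ref{lem:generate} at the level of morphisms rather than objects. The only point that needs a moment of care is the standard categorical fact that a natural transformation is determined by its components and need only be checked on a generating set of morphisms when the target category (here $\mathbf{Set}$) admits enough structure to make the relevant compositions well-defined, which is automatic. No calculation or measure-theoretic input is required, since the statement is purely combinatorial/categorical and sits entirely on the $\sS{A}$-side of the ambient data complex.
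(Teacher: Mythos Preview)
Your proposal is correct and is precisely the standard argument the paper leaves implicit: the corollary is stated without proof, as an immediate consequence of Lemma~\ref{lem:generate} (that co-faces and co-degeneracies generate all morphisms of $\mathbf{\Delta_a}$), and your unpacking of naturality plus reduction to generators is exactly that consequence spelled out.
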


A particularly important example of a simplicial set is $\Delta^n$, the
$n$-simplex. (See \ref{sec:DJKC}.)
\begin{defn}[Simplex]
The standard $n$-simplex $\Delta^n$ is the simplicial set generated (via face and
degeneracy maps) by the ordered set $\mathbf{n}=\{0,\ldots,n\}$ in the simplex
category $\mathbf{\Delta}$.
\label{def:simplex}
\end{defn}
By the Yoneda Lemma, a simplicial set $X$ is characterized by the simplicial
maps $\Delta^n \to X$; that is, a simplicial set is characterized by its
simplices.  

\begin{defn}[Horn]
The $k$th \emph{horn} $\Lambda^n_k$ of the $n$-simplex $\Delta^n$ is the
simplicial subset generated by the union of all the faces of $\Delta^n$ except the $k$th face.
\label{def:horn}
\end{defn}
By Lemma~\ref{lem:generate} and the Yoneda Lemma, if $X$ is a simplicial set,
then a horn in $X$ is a collection of $n$ $(n{-}1)$-simplices $f_0, \ldots,
f_{k-1}, f_{k+1}, \ldots, f_n$ such that $d_i f_j = d_{j-1}f_i$ for $i<j$.

A simplicial map $f:X \to Y$ is called a \emph{cofibration} iff it is a
monomorphism.  A simplicial map $f:X \to Y$ is called a \emph{fibration} 
iff for any cofibration $i:\Lambda^n_k \into \Delta^n$, the
commutative diagram \eqref{eqn:fibSet} can be completed.
\begin{equation}
\begin{tikzpicture}[node distance=2cm]
\node (X) {$X$};
\node[below of=X] (Y) {$Y$};
\node[left of=X] (H) {$\Lambda^n_k$};
\node[left of=Y] (D) {$\Delta^n$};
\draw[->] (X) to node[right] {$f$} (Y);
\draw[->] (H) to node[left] {$i$} (D);
\draw[->] (H) to node[above] { } (X);
\draw[->] (D) to node[above] { } (Y);
\draw[->,dashed] (D) to node[above] { } (X);
\end{tikzpicture}
\label{eqn:fibSet}\end{equation}
Weak-equivalences are defined to be compatible with fibrations and cofibrations
according to \cite{Quillen1967}.  See also \cite{Seriesa}.  These definitions
of cofibration, fibration, and weak equivalence make $\mathbf{sSet}$ into a
(closed) model category.

A simplicial set $X$ is called \emph{fibrant} or to satisfy the \emph{Kan
extension condition} if $f:X \to \{*\}$ is a fibration; that is, a simplicial
set satisfies the Kan condition if and only if each horn $\Lambda^n_k$ in $X$
can be extended to a simplex $\Delta^n$ in $X$.  Let $\mathbf{sSet}_f$
denote the subcategory of fibrant simplicial sets.  Then there is a homotopy
category $\Pi_n(\mathbf{sSet}_f)$, and any $X \in \mathbf{sSet}_f$ admits
pointed homotopy groups $\pi_n(X,x)$ that characterize the weak equivalence.
Moreover, the simplicial homotopy groups of $X \in \mathbf{sSet}_f$ are
isomorphic to the continuous homotopy groups of its topological realization,
$|X|$,  as discussed in \cite[\S3]{Quillen1967} and
\cite[Chap~I.2]{Seriesa}.  See also \cite{Kan1958} and \cite{May1967} for
historical explanations that minimize categorical language.

\subsection{Data Complexes}
Let $\mathbf{DataCplx}$ denote the category of data complexes.
An object in $\mathbf{DataCplx}$ is a pair of augmented simplicial sets
$(\sS{X},\sS{A})$ with simplicial map $p:\sS{X} \to \sS{A}$ such that
for each $\mathbf{n} \in \Delta_a$, the set $\sS{X}_n$ is a set of data tables
over attribute lists $\sS{A}_n$ from some attribute set $A$, as in Section~\ref{sec:db}, with $d_i$ and $s_i$
by marginalization and Dirac-delta intersection, respectively.

A morphism in $\mathbf{DataCplx}$ is simplicial map $f:(\sS{X},\sS{A}) \to
(\sS{Y},\sS{B})$ as in \eqref{eqn:natDT} with some compatibility conditions.
\begin{equation}
\begin{tikzpicture}[node distance=2cm]
\node (X) {$\sS{X}$};
\node[below of=X] (Y) {$\sS{Y}$};
\node[right of=X] (Xnp) {$\sS{X}_{n'}$};
\node[right of=Y] (Ynp) {$\sS{Y}_{n'}$};
\node[right of=Xnp] (Xn) {$\sS{X}_n$};
\node[right of=Ynp] (Yn) {$\sS{Y}_n$};
\node (np) [below =0.5cm of Ynp] {$\mathbf{n'}$};
\node (n) [below =0.5cm of Yn] {$\mathbf{n}$};
\draw[->] (X) to node[right] {$f$} (Y);
\draw[->] (Xn) to node[above] {$\sS{X}(\mu)$} (Xnp);
\draw[->] (Yn) to node[above] {$\sS{Y}(\mu)$} (Ynp);
\draw[->] (np) to node[above] {$\mu$} (n);
\draw[->] (Xnp) to node[left] {} (Ynp);
\draw[->] (Xn) to node[left] {} (Yn);
\end{tikzpicture}
\label{eqn:natDT}
\end{equation}
The vertical maps are tuples $(\varphi_n, \{ \psi_a \}_{a \in A}, f_n)$
satisfying the following compatibility conditions.
\begin{enumerate}
\item $\varphi_n:\sS{A}_n \to \sS{B}_n$ is a level of a simplicial map
$\varphi:\sS{A} \to \sS{B}$ on sets of attribute lists.
\item $f_n:\sS{X}_n \to \sS{Y}_n$ is a level of a simplicial map
$f:\sS{X} \to \sS{Y}$ on sets of measures, with $\varphi_n = p \circ f_n$.
\item $\psi_a:\Val([a]) \to \Val([b])$ is a continuous function on
metric spaces, where $[b] = \varphi_{0}([a]) \in \sS{B}_{0}$.
This induces $\psi_T:\Val(T) \to \Val(\varphi_{n}(T))$ for all $T
\in \sS{A}_n$.
\item If $(T,\tau) \in \sS{X}_n$ with $\varphi_n(T) = S$ and $f_n(T,\tau)
= (S,\sigma)$, then $\tau \circ \psi_T^{-1} = \sigma$ as measures.
That is,
\begin{equation}
f_n:(T,\tau) \mapsto (\varphi_n(T), \tau \circ \psi_T^{-1}).
\label{eqn:mapcompat}
\end{equation}
\end{enumerate}
These conditions guarantee simply that the attribute lists $T$, the value
spaces $\Val(T)$, and the measure spaces $\Msr(T)$ remain compatible.
As with $\mathbf{sSet}$, in \eqref{eqn:natDT}, the map $\mu:\mathbf{n'} \to
\mathbf{n}$ can be taken to be $d^i:\mathbf{n-1} \to \mathbf{n}$ or
$s^i:\mathbf{n+1} \to \mathbf{n}$ so that the diagram describes naturality with
respect to face and degeneracy maps on $\sS{X}$ and $\sS{Y}$.
These conditions are sensible for $n\geq 0$, so they apply to the trivial data
table $([ ], M)$. 

For each real number $M \geq 0$, there is a \emph{singleton} data complex with
$A = \{ * \}$, $\Val(*) = \{ *\}$.  For each $n \geq -1$, there is a single
attribute list $[*, \ldots, *]$ with a singleton value space  $\{*\}^n$ and one
measure, $M$.   For brevity, we refer to this singleton data complex as $M$.

Slightly more generally, there is a \emph{terminal} data complex with
$A = \{ * \}$, $\Val(*) = \{ * \}$.  For each $n \geq -1$, there is a single
attribute list $[*, \ldots, *]$ with a singleton value space  $\{*\}^n$ and
measures $M$ for each $M \geq 0$.   The terminal data complex is the union of
all the singleton data complexes.  For brevity, we refer to the terminal data
complex as $\mathbb{R}_{\geq 0}$.

Every data complex $\sS{X}$ admits a morphism to the terminal data complex $\mathbb{R}_{\geq 0}$.  This
terminal morphism $f$ maps each data table $(T, \tau) \in \sS{X}$ to the
singleton mass $([*, \ldots, *], \tot \tau) \in \mathbb{R}_{\geq 0}$.
If all data tables in $\sS{X}$ share the same mass (say, $M=1$), then the image
of the terminal morphism goes to some $M \subset \mathbb{R}_{\geq 0}$. 

A morphism in $\mathbf{DataCplx}$ is called a cofibration iff it is a
monomorphism.  A morphism in $\mathbf{DataCplx}$ is called a fibration iff 
for any cofibration of from a well-aligned pair to a join
$i:\pair{\tau_{01},\tau_{02}}_{T_0} \to \pair{\tau_{012}}$, the
commutative diagram \eqref{eqn:fibData} can be completed.
\begin{equation}
\begin{tikzpicture}[node distance=2cm]
\node (X) {$\sS{X}$};
\node[below of=X] (Y) {$\sS{Y}$};
\node[left of=X] (H) {$\pair{\tau_{01},\tau_{02}}_{T_0}$};
\node[left of=Y] (D) {$\pair{\tau_{012}}$};
\draw[->] (X) to node[right] {$f$} (Y);
\draw[->] (H) to node[left] {$i$} (D);
\draw[->] (H) to node[above] { } (X);
\draw[->] (D) to node[above] { } (Y);
\draw[->,dashed] (D) to node[above] { } (X);
\end{tikzpicture}
\label{eqn:fibData}
\end{equation}

A data complex $\sS{X}$ is called \emph{fibrant} if the terminal morphism $\sS{X} \to
\mathbb{R}_{\geq 0}$
is a fibration.
By Theorem~\ref{thm:ft}, if $\sS{X}$ is a fibrant data complex, then $\sS{X}$ is a fibrant simplicial set.
Thus, the category  $\mathbf{DataCplx}$ is a (closed) model
category, and the fibrant subcategory $\mathbf{DataCplx}_f$ inherits a 
well-defined homotopy category $\Pi_n(\mathbf{DataCpl}_f)$ from
$\mathbf{sSet}_f$,
and any $\sS{X} \in \mathbf{DataCplx}_f$ admits
pointed homotopy groups $\pi_n(\sS{X},\tau_0)$ that characterize the weak equivalence.
Moreover, the homotopy groups 
are isomorphic to the continuous homotopy groups of the topological realization
of the underlying simplicial set.

\newpage
\providecommand{\bysame}{\leavevmode\hbox to3em{\hrulefill}\thinspace}
\providecommand{\MR}{\relax\ifhmode\unskip\space\fi MR }
\providecommand{\MRhref}[2]{%
  \href{http://www.ams.org/mathscinet-getitem?mr=#1}{#2}
}
\providecommand{\href}[2]{#2}

\end{document}